\documentclass[11pt]{article}

\usepackage{amsmath,amsfonts,amssymb,amsthm,amscd}
\usepackage{a4wide}
\usepackage{multirow}
\usepackage{graphicx}
\usepackage[titletoc]{appendix}

\usepackage{url}

\usepackage{hyperref}
\usepackage{breakurl}
\usepackage[table,x11names]{xcolor}
\usepackage{hhline}
\usepackage{color}
\usepackage{colortbl} 
\usepackage{tabu}     

\definecolor{gray}{rgb}{.85,.85,.85}
\definecolor{LightCyan}{rgb}{.85,1,.9}

\definecolor{seda}{rgb}{.7,.7,.7}
\definecolor{modra}{rgb}{0,0,.8}
\definecolor{piros}{rgb}{.8,0,0}
\definecolor{zluta}{rgb}{1,.9,.6}

\definecolor{modra3}{rgb}{.1,.0,.4}
\hypersetup{colorlinks=true, linkcolor=modra3, urlcolor=modra3, citecolor=modra3, pdfpagemode=UseNone, pdfstartview=} 

\if1     
\usepackage[mathlines]{lineno}
\newcommand*\patchAmsMathEnvironmentForLineno[1]{%
  \expandafter\let\csname old#1\expandafter\endcsname\csname #1\endcsname
  \expandafter\let\csname oldend#1\expandafter\endcsname\csname end#1\endcsname
  \renewenvironment{#1}%
     {\linenomath\csname old#1\endcsname}%
     {\csname oldend#1\endcsname\endlinenomath}}%
\newcommand*\patchBothAmsMathEnvironmentsForLineno[1]{%
  \patchAmsMathEnvironmentForLineno{#1}%
  \patchAmsMathEnvironmentForLineno{#1*}}%
\AtBeginDocument{%
\patchBothAmsMathEnvironmentsForLineno{equation}%
\patchBothAmsMathEnvironmentsForLineno{align}%
\patchBothAmsMathEnvironmentsForLineno{flalign}%
\patchBothAmsMathEnvironmentsForLineno{alignat}%
\patchBothAmsMathEnvironmentsForLineno{gather}%
\patchBothAmsMathEnvironmentsForLineno{multline}%
}
\linenumbers
\fi

\DeclareMathOperator{\Sym}{Sym}

\newtheorem{theorem}{Theorem}[section]   

\newtheorem{proposition}[theorem]{Proposition}
\newtheorem{observation}[theorem]{Observation}
\newtheorem{lemma}[theorem]{Lemma}

\newtheorem{fact}[theorem]{Fact}

\newtheorem{corollary}[theorem]{Corollary}
\newtheorem{problem}{Problem}


\newcommand{\R}{{\mathbb{R}}}
\newcommand{\Q}{{\mathbb{Q}}}

\renewcommand{\S}{{\mathbb{S}}}

\newcommand\eps{\varepsilon}

\newcommand{\T}{{\mathcal{T}}}

\def\myendmatrix{\end{array}\right)}
\def\mybeginmatrix{\left(\begin{array}{rrrrr}}

\def\emm{m}

\def\inst#1{$^{#1}$}

\def\marrow{{\boldmath {\marginpar[\hfill$\Rrightarrow
\Rrightarrow$]{$\Lleftarrow \Lleftarrow$}}}}
\def\jk#1{\color{modra} \ifhmode\newline\fi{\,}\hrule\vskip 1mm\hskip -2cm {\sc\small JK:}
{\marrow\sf #1 }\vskip 1mm\hrule\vskip 1mm \normalcolor}
\def\zs#1{\color{piros} \ifhmode\newline\fi{\,}\hrule\vskip 1mm\hskip -2cm {\sc\small ZS:}
{\marrow\sf #1 }\vskip 1mm\hrule\vskip 1mm \normalcolor}


\tabulinesep=0.5mm


\begin{document}

\title{On the nonexistence of $k$-reptile simplices in $\mathbb R^3$ and $\mathbb R^4$
\thanks{
The authors were supported by the project CE-ITI (GA\v{C}R P202/12/G061) of the Czech Science Foundation,
by the grant SVV-2015-260223 (Discrete Models and Algorithms),
by project GAUK 52410, and by ERC Advanced Research
Grant no 267165 (DISCONV). The research was partly conducted during the Special
Semester on Discrete and Computational Geometry at \'Ecole Polytechnique
F\'ederale de Lausanne, organized and supported by the CIB (Centre
Interfacultaire Bernoulli) and the SNSF (Swiss National Science Foundation).
} 
} 

\author{Jan Kyn\v cl\inst{1}${}^,$\inst{2} and Zuzana Pat\'akov\'a\inst{1}}

\date{}

\maketitle

\begin{center} 
{\footnotesize
\inst{1}
Department of Applied Mathematics and Institute for Theoretical Computer
Science, \\
Charles University, Faculty of Mathematics and Physics, \\
Malostransk\'e n\'am.~25, 118~ 00 Praha 1, Czech Republic; \\
\texttt{kyncl@kam.mff.cuni.cz, zuzka@kam.mff.cuni.cz}
\\\ 
\inst{2}
Alfr\'ed R\'enyi Institute of Mathematics, Re\'altanoda u. 13-15, Budapest 1053, Hungary
}
\end{center}  


\begin{abstract}
A $d$-dimensional~simplex $S$ is called a \emph{$k$-reptile} (or a
\emph{$k$-reptile simplex}) if it can be tiled by $k$ simplices
with disjoint interiors that are all mutually congruent and similar to $S$. 
For $d=2$, triangular $k$-reptiles exist for all $k$ of the form $a^2, 3a^2$ or $a^2 + b^2$
and they have been completely 
characterized by Snover, Waiveris, and Williams. On the other hand, the only
$k$-reptile simplices that are known for $d \ge 3$, have $k = m^d$, where 
$m$ is a positive integer. 
We substantially simplify the proof by Matou\v{s}ek and the second author that for $d=3$, $k$-reptile tetrahedra can exist only for $k=m^3$.
We then prove a weaker analogue of this result for $d=4$ by showing that four-dimensional $k$-reptile
simplices can exist only for $k=m^2$.
\end{abstract}


\section{Introduction}

A {\em tiling\/} of a closed set $X$ in $\R^d$ (or in the unit sphere $S^d$) is a locally finite decomposition $X=\bigcup_{i\in I}X_i$ into closed sets with nonempty and pairwise disjoint interiors. The sets $X_i$ are called {\em tiles\/}. If $X$ has a tiling where all the tiles are congruent to a set $T$, we say that $T$ {\em tiles\/} $X$, or, that $X$ {\em can be tiled with (\/$|I|$ copies of)} $T$. We emphasize that congruence includes mirror symmetries.

A closed set $X\subset\R^d$ with nonempty interior is called a {\em
$k$-reptile\/} (or a {\em $k$-reptile set}) if $X$ can be tiled with $k$ mutually congruent copies of a set similar to $X$.

It is easy to see that whenever $S$ is
a  $d$-dimensional $k$-reptile set, then $S$ is {\em space-filling}, that is, the space $\R^d$
can be tiled with $S$: indeed, using
the tiling of $S$ by its smaller copies 
as a pattern, one can inductively tile larger and larger similar
copies of~$S$. On the other hand, it is a simple exercise to find space-filling polytopes or polygons
that are not $k$-reptiles for any $k\ge 2$.

Clearly, every triangle tiles $\R^2$. Moreover, every triangle $T$ is a $k$-reptile for $k=m^2$, since $T$ can be tiled in a regular way with $m^2$ congruent tiles, each positively or negatively homothetic to $T$. See Snover et al.~\cite{snover} for an illustration.

In this paper we study the existence of $k$-reptile {\em simplices\/} in $\R^d$, especially for $d=3$ and $d=4$.

\paragraph{Space-filling simplices.}
The question of characterizing the tetrahedra that tile $\R^3$ 
is still open and apparently rather difficult. The first systematic study
of space-filling tetrahedra was made by Sommerville. 
Sommerville~\cite{sommerville-2} discovered a list of exactly four tilings (up
to isometry and rescaling), but he assumed that all tiles are {\em properly
congruent\/} (that is, congruent by an orientation-preserving 
isometry) and meet face-to-face. Edmonds~\cite{edmonds} noticed a gap in Sommerville's proof and by completing the analysis, he confirmed that Sommerville's classification of proper, face-to-face tilings is complete. Baumgartner~\cite{baumgartner} found three of Sommerville's tetrahedra and one new tetrahedron that admits a non-proper face-to-face tiling (and also a proper non face-to-face tiling~\cite{goldberg}).
Goldberg~\cite{goldberg} described three families of proper (generally not face-to-face) tilings, obtained by partitioning a triangular prism. In fact, Goldberg's first family was found by Sommerville~\cite{sommerville-2} before, but he selected only special cases with a certain symmetry. Goldberg's first family also coincides with the family of simplices found by Hill~\cite{hill}, whose aim was to classify {\em rectifiable} simplices, that is, simplices that can be cut by straight cuts into finitely many pieces and rearranged to form a cube. The simplices in Goldberg's second and third families are obtained from the simplices in the first family by splitting into two congruent halves. According to Senechal's survey~\cite{senechal}, no other space-filling tetrahedra 
are known. 

For $d \ge 3$, Debrunner~\cite{debrunner} constructed $\lfloor d/2 \rfloor + 2$ one-parameter families and several special types of $d$-dimensional simplices that tile $\R^d$. Smith~\cite{WSmith-pythag} generalized Goldberg's construction and using Debrunner's ideas, he obtained $(\lfloor d/2 \rfloor + 2)\phi(d)/2$ one-parameter families of space-filling $d$-dimensional simplices; here $\phi(d)$ is the Euler's totient function. 
It is not known whether for some $d  \ge 3$ there is a space-filling simplex with all dihedral angles acute
or a two-parameter family of space-filling simplices~\cite{WSmith-pythag}.

\paragraph{Hilbert's problems.}
Two Hilbert's problems are related to tilings of the Euclidean space.
The second part of Hilbert's 18th problem asks whether there exists a polyhedron that tiles the $3$-dimensional Euclidean space but does not admit an isohedral (tile-transitive) tiling. The first such tile in three dimensions
was found by Reinhardt~\cite{reinhardt}. Later Heesch~\cite{heesch} found a planar anisohedral nonconvex polygon and  Kershner~\cite{kershner} found an anisohedral convex pentagon. Hilbert's 18th problem was discussed in detail by Milnor~\cite{milnor}. See also the survey by Gr\"unbaum and Shepard~\cite{grunbaum_shephard} for a discussion of this problem and related questions. While iterated tilings of the space using tilings of some $k$-reptiles as a pattern may be highly irregular, it is an interesting question whether there is an anisohedral $k$-reptile polytope or polygon. Vince~\cite[Question 2]{vince95} asked whether there is a $k$-reptile that admits no periodic tiling. 

The third Hilbert's problem asks whether two tetrahedra with equal bases and altitudes are {\em equidecomposable}, that is, whether one can cut one tetrahedron into finitely many polytopes and reassemble them to form the second tetrahedron. A positive answer would provide an elementary proof of the formula for the volume of the tetrahedron. However, Dehn~\cite{Dehn02_rauminhalt} answered the question in the negative, by introducing an algebraic invariant for equidecomposability. See~\cite{Do06_scissors} for an elementary exposition or~\cite[Chapter 9]{aigner_ziegler},~\cite{benko} for alternative proofs.
Debrunner~\cite{Deb80_fill_cube_R3R4} proved that every polytope that tiles $\mathbb{R}^d$ has its {\em codimension 2 Dehn's invariant\/} equal to zero. Lagarias and Moews~\cite{LM95_priority,LM95_fill_cube_R3R4} showed that, more generally, every polytope that tiles $\mathbb{R}^d$ has its {\em classical total Euclidean Dehn's invariant\/} equal to zero. In particular, these properties are necessary for every $k$-reptile simplex. For $d=3$ and $d=4$, the results of Sydler~\cite{Sydler65_cube_dehn} (see also~\cite{Jes68_R3_sydler}) and Jessen~\cite{Jes72_R4_cube_dehn} imply that every polytope that tiles $\mathbb{R}^d$ is equidecomposable with a cube~\cite{Deb80_fill_cube_R3R4,LM95_priority,LM95_fill_cube_R3R4}. 

\paragraph{Reptiles and other animals.}
Motivated by classical puzzles that require splitting a given figure into a given number of congruent {\em replicas\/} of the original figure, Langford~\cite{Langford40} initiated a systematic study of planar $k$-reptiles. Golomb~\cite{golomb64} introduced the term {\em replicating figure of order $k$}, shortly {\em a rep-$k$}, and described several more examples, including disconnected or totally disconnected fractal tiles. See also Gardner's~\cite{gardner91_hanging} short survey.  
Extending the theory of self-similar sets and fractals, Bandt~\cite{bandt91} described a general construction of infinitely many self-similar $k$-reptiles, including several species of dragons, which are examples of {\em disk-like\/} (that is, homeomorphic to a disk) reptiles. Gelbrich~\cite{gelbrich93} proved that for every $k$, there are only finitely many planar disk-like crystallographic (isohedral) $k$-reptiles.
See Gelbrich and Giesche~\cite{GG98_salamanders} for illustrations of several such $7$-reptiles, such as sea horses or salamanders.
Vince~\cite{vince95} studied lattice reptiles and their connection with generalized number systems.

\paragraph{$k$-reptile simplices.}
In recent years the subject of tilings has received a certain impulse from
computer graphics and other 
computer applications. In fact, our original motivation for studying simplices that
are $k$-reptiles comes 
from a problem of probabilistic marking of Internet packets for IP
traceback~\cite{adler,adler_et_al}. See~\cite{matousek} for a brief summary of
the ideas of this method.
For this application, it would be interesting to find a $d$-dimensional 
simplex that is a $k$-reptile with $k$ as small as possible.

For dimension 2 there are several possible types of $k$-reptile triangles, and
they
have been completely classified by Snover et al.~\cite{snover}. 
In particular, $k$-reptile triangles exist for all $k$ 
of the form $a^2+b^2$, $a^2$ or $3a^2$ for arbitrary integers $a,b$. 
In contrast, for $d\ge 3$, reptile simplices seem to be much more rare.
The only known constructions of higher-dimensional $k$-reptile simplices
have $k=m^d$. The best known examples are the {\em Hill simplices\/}     
(or the {\em Hadwiger--Hill simplices\/})~\cite{debrunner,hadwiger,hill}.
A $d$-dimensional Hill simplex is the convex hull of vectors
$0$, $b_1$, $b_1+b_2$, \dots, $b_1+b_2+ \cdots +b_d$,
where $b_1,b_2,\dots,b_d$ are vectors of equal length
such that the angle between every two of them is the same
and lies in the interval $(0,\frac{\pi}{2}+\arcsin{\frac{1}{d-1}})$.

Hertel~\cite{hertel} proved that a $3$-dimensional
simplex is an $m^3$-reptile using a ``standard'' way of 
dissection (which we will not define here) if and only if it is a Hill simplex.
He conjectured that Hill simplices are the only 3-dimensional
reptile simplices. Herman Haverkort recently pointed us to an example of a
$k$-reptile
tetrahedron by Liu and Joe~\cite{liujoe94} which is not Hill, and thus contradicts Hertel's conjecture. In fact, except for the one-parameter family of Hill tetrahedra, two other space-filling tetrahedra described by Sommerville~\cite{sommerville-2} and Goldberg~\cite{goldberg} are also $k$-reptiles for every $k=m^3$. Both these tetrahedra tile the right-angled Hill tetrahedron, and their tilings are based on the barycentric subdivision of the cube. 
Maehara~\cite{maehara13} described a generalized construction of $d$ distinct $k$-reptile simplices in $\mathbb{R}^d$ for $k=2^d$. It is easy to see that the lattice tiling of $\mathbb{R}^d$ by barycentrically subdivided unit cubes can be obtained by cutting the space with hyperplanes $x_i=n/2$, $x_i+x_j=n$, $x_i-x_j=n$, for every $i,j\in[d], i\neq j$ and $n\in \mathbb{Z}$. Each tile in this tiling is congruent to the right-angled Hill simplex $H^0_d$ defined as the convex hull of points $(0,0,\dots,0)$, $(1/2,0,\dots,0)$, \dots, $(1/2,1/2,\dots,1/2)$.
For every $m$, this tiling contains a tiling of an $m$ times scaled copy of $H^0_d$. Similarly, by removing the hyperplanes $x_i=(2n+1)/2$ from the cutting, we obtain a tiling of $\mathbb{R}^d$ with tiles that are made of two copies of $H^0_d$; more precisely, each tile is congruent to the simplex $H^1_d$ defined as the convex hull of the points $(0,0,\dots,0)$, $(1,0,\dots,0)$, $(1/2,1/2,0,\dots,0)$, \dots, $(1/2,1/2,\dots,1/2)$. Again, for every $m$, this tiling contains a tiling of an $m$ times scaled copy of $H^1_d$.

Let $H^2_d$ be the convex hull of the points $(0,0,\dots,0)$, $(1,0,\dots,0)$, $(1,\allowbreak1,\allowbreak0,\dots,0)$, $(1/2,\allowbreak 1/2,\allowbreak 1/2,\allowbreak 0,\dots,0), \dots, (1/2,\allowbreak 1/2, \dots, 1/2)$. The simplex $H^2_d$ can be tiled with two copies of $H^1_d$ or four copies of $H^0_d$. Let $m$ be a fixed positive integer and let $m\cdot H^2_d$ be the $m$ times scaled copy of $H^2_d$ obtained from $H^2_d$ by multiplying all the coordinates of all its points by $m$. The tiling described in the previous paragraph provides a tiling of $m\cdot H^2_d$ by $2m^d$ copies of $H^1_d$. To obtain a tiling of $m\cdot H^2_d$ by copies of $H^2_d$, it is enough to join the copies of $H^1_d$ into $m^d$ disjoint pairs so that each pair forms a copy of $H^2_d$. 
Every tile $H$ in the tiling of $\mathbb{R}^d$ by copies of $H^1_d$ can be represented by the center $\mathbf{z}=(n_1+1/2, n_2+1/2, \dots, n_d+1/2)$ of a unit cube it contains and by a signed permutation $(\varepsilon_1i_1, \varepsilon_2i_2, \dots, \varepsilon_{d-1}i_{d-1})$ where $\varepsilon_i\in\{-1,1\}$, $i_j\in [d]$ and $i_j\neq i_k$ if $j\neq k$. Let $\{i_d\}=\{1,2,\dots,d\}\setminus \{i_1,i_2,\dots,i_{d-1}\}$ and let $\mathbf{e}_i$ be the $i$th unit vector of the canonical basis. The tile $H$ is then the convex hull of the points 
\begin{align*}
& \mathbf{z},\\ 
& \mathbf{z}+ \varepsilon_1\mathbf{e}_{i_1},\ \dots \ , \\
& \mathbf{z}+ \varepsilon_1\mathbf{e}_{i_1} + \varepsilon_2\mathbf{e}_{i_2} + \dots + \varepsilon_{d-2}\mathbf{e}_{i_{d-2}},\\
& \mathbf{z}+ \varepsilon_1\mathbf{e}_{i_1} + \varepsilon_2\mathbf{e}_{i_2} + \dots + \varepsilon_{d-2}\mathbf{e}_{i_{d-2}}+\varepsilon_{d-1}\mathbf{e}_{i_{d-1}} + \mathbf{e}_{i_{d}},\\
& \mathbf{z}+ \varepsilon_1\mathbf {e}_{i_1} + \varepsilon_2\mathbf{e}_{i_2} + \dots + \varepsilon_{d-2}\mathbf{e}_{i_{d-2}}+\varepsilon_{d-1}\mathbf{e}_{i_{d-1}} - \mathbf{e}_{i_{d}}.
\end{align*}
We say that a tile $H$ is \emph{compatible} with a tile $H'$ if their union is a simplex congruent to $H^2_n$.
A tile $H$ represented by $\mathbf{z}$ and $(\varepsilon_1i_1, \varepsilon_2i_2, \dots, \varepsilon_{d-1}i_{d-1})$ and a tile $H'$ represented by $\mathbf{z'}$ and $(\varepsilon'_1i'_1, \varepsilon'_2i'_2, \dots, \varepsilon'_{d-1}i'_{d-1})$ are compatible if and only if $\mathbf{z}=\mathbf{z'}$ and $(\varepsilon_1i_1, \varepsilon_2i_2, \dots, \varepsilon_{d-2}i_{d-2})=(\varepsilon'_1i'_1, \varepsilon'_2i'_2, \dots, \varepsilon'_{d-2}i'_{d-2})$. In particular, each tile $H$ is compatible with two other neighboring tiles, and every component in the corresponding \emph{compatibility graph} $\mathcal{G}$ is a four-cycle. 
Let $H$ be a tile represented by $\mathbf{z}$ and $(\varepsilon_1i_1, \varepsilon_2i_2, \dots, \varepsilon_{d-1}i_{d-1})$. The four tiles $H,H',H',H'''$ forming a component of $\mathcal{G}$ containing $H$ are separated by hyperplanes orthogonal to the vectors $x_{i_{d-1}}+x_{i_{d}}$ and $x_{i_{d-1}}-x_{i_{d}}$. Since at most one of these hyperplanes determines a facet of $m\cdot H^2_d$, the simplex $m\cdot H^2_d$ contains an even number of the tiles $H,H',H',H'''$, which can be matched into zero, one or two compatible pairs. 

Let $H^i_d$ be the $d$-dimensional simplex $\sigma_i$ described by Maehara~\cite{maehara13}. These simplices satisfy $\sigma_i=H^i_d$ for $i\in \{0,1,2\}$, $\sigma_d=2\cdot H^0_d$ and in general, for each $i\in [d]$, the simplex $H^{i}_d$ can be tiled with two copies of $H^{i-1}_d$. Since for every positive integer $m$ the simplex $H^0_d$ is $m^d$-reptile, each of the simplices $H^{i}_d$ is $(2m)^d$-reptile. Except for $i\in\{0,1,2\}$, we do not know whether $H^{i}_d$ is $m^d$-reptile for odd $m\ge 3$.

\begin{problem}
Let $d$ and $i$ be positive integers satisfying $3\le i\le d-1$ and let $m\ge 3$ be an odd integer. Is it true that the simplex $H^i_d$ is $m^d$-reptile?
\end{problem}

Matou\v{s}ek~\cite{matousek} showed that
there are no $2$-reptile simplices of dimension $3$ or larger.
For dimension $d=3$, Matou\v{s}ek and the second author~\cite{MS} proved the
following theorem.

\begin{theorem}\label{veta_3dim}{\rm \cite{MS}}
In $\R^3$, $k$-reptile simplices (tetrahedra) exist only for $k$ of the form
$m^3$ where $m$ is a positive integer.
\end{theorem}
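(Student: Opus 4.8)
Suppose $S\subset\R^3$ is a tetrahedron that is a $k$-reptile, so that $S$ is tiled by $k$ congruent copies of $\rho S$ for some ratio $\rho\in(0,1)$. Comparing volumes gives $k\rho^3=1$, hence $\rho^{-1}=k^{1/3}$ is a root of the monic integer polynomial $x^3-k$ and so is an algebraic integer. Since a rational algebraic integer is an ordinary integer, it suffices to prove $\rho^{-1}\in\Q$: then $\rho^{-1}=m$ for a positive integer $m$ and $k=m^3$. Iterating the reptile tiling yields, for every $n$, a tiling of $S$ by $k^n$ copies of $\rho^n S$ refining the previous one, which I will use freely.

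The plan is then to combine two sources of information. The \emph{metric} source comes from two inheritance lemmas, proved by a local analysis of tangent cones and the fact that every dihedral angle of a simplex is less than $\pi$: (i) for every edge $e$ of $S$, each tile meets $e$ in the empty set, a vertex, or a full edge of that tile, so $e$ is partitioned into tile-edges; (ii) likewise each $2$-face of $S$ is partitioned into $2$-faces of tiles. Counting the $\rho$-scaled tile-edges lying on each edge of $S$ and the $\rho$-scaled tile-faces lying on each $2$-face, and writing $\ell_1,\dots,\ell_6$ for the edge lengths and $A_1,\dots,A_4$ for the face areas of $S$, one gets relations $\ell_i=\rho\sum_j B_{ij}\ell_j$ and $A_i=\rho^2\sum_j M_{ij}A_j$ with non-negative integer matrices $B$ and $M$; thus $\rho^{-1}$ is an eigenvalue of $B$ with the positive eigenvector $(\ell_1,\dots,\ell_6)$, and $\rho^{-2}$ an eigenvalue of $M$ with positive eigenvector $(A_1,\dots,A_4)$. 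This re-proves that $\rho^{-1}$ is an algebraic integer but cannot settle rationality by itself --- for instance $2^{1/3}$ is a Perron--Frobenius eigenvalue of the companion matrix of $x^3-2$, with positive eigenvector $(2^{2/3},2^{1/3},1)$ --- so an \emph{angular} source is also needed: around each edge of each tile the dihedral angles of the incident tiles sum to $2\pi$, to $\pi$, or to one of the six dihedral angles $\alpha_1,\dots,\alpha_6$ of $S$ according as the edge is interior to the tiling, lies in the relative interior of a $2$-face of $S$, or lies on an edge of $S$; similarly the solid angles sum around vertices. Since each tile dihedral angle is some $\alpha_i$, this gives a system of $\Z$-linear relations among $\alpha_1,\dots,\alpha_6$ and $\pi$, while the $\alpha_i$ are in turn determined by the shape of $S$, i.e.\ by the ratios $\ell_i/\ell_1$.

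The hard step --- and, I expect, where the present paper streamlines the argument of~\cite{MS} --- is converting all this into $\rho^{-1}\in\Q$. One natural route has two stages. First, combine the angle and solid-angle equations (used near the longest edge and near the vertices of $S$) with the inheritance lemmas, and with known results on rational linear relations between angles in the spirit of Conway and Jones, to show that $\rho$ and all ratios $\ell_i/\ell_1$ are algebraic over $\Q$ of degree bounded by an absolute constant, and that only finitely many combinatorial types of reptile tiling can occur; this reduction to finiteness is where I expect the main difficulty, since a priori the shape of $S$ varies continuously and the combinatorics of the tiling is unbounded, so the real work is the bookkeeping. Second, for each of the finitely many types, either verify $\rho\in\Q$ directly, or argue by contradiction: if $k$ is not a perfect cube then $x^3-k$ is irreducible and $\rho^{-1}=k^{1/3}$ acquires further conjugates $\omega k^{1/3},\omega^2 k^{1/3}$ with $\omega$ a primitive cube root of unity, and applying the corresponding field automorphism to the now purely algebraic description of the tiling --- the integer matrices $B,M$ and all the linear relations are fixed by it --- would produce edge lengths and angles that cannot belong to a genuine Euclidean tetrahedron tiled by real, positively scaled copies of itself, the desired contradiction.
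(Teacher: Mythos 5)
Your proposal is a plan rather than a proof: the step you yourself flag as ``the hard step'' --- converting the metric and angular relations into $\rho^{-1}\in\Q$ --- is the entire content of the theorem, and neither of the two stages you sketch for it is carried out. Concretely, your eigenvalue relation $\ell=\rho B\ell$ with the full $6\times 6$ nonnegative integer matrix $B$ is, as you correctly note, useless for rationality. The idea that makes this argument bite is to restrict attention to the edges carrying a fixed \emph{indivisible} dihedral angle, i.e.\ one that is not a nonnegative integer combination of the other dihedral angles of $S$: indivisibility forces every tile edge lying on such an edge of $S$ to carry that same dihedral angle, so the subdivision relation closes up on just this subfamily of edges, and if they had at most two distinct lengths then $k^{1/3}$ would be an eigenvalue of an integer matrix of size at most $2$, contradicting its algebraic degree $3$ when $k$ is not a cube. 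This is Lemma~\ref{lemma_aspon3}: the edges with an indivisible dihedral angle have at least three distinct lengths. Your proposal never isolates a subfamily on which the relation closes, so the degree count never produces a contradiction.

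The second missing ingredient is how one turns ``at least three distinct lengths'' into a contradiction. The paper does not go through a Conway--Jones-style finiteness reduction or a Galois conjugation of the tiling. It uses the angle relations around edges only to establish Lemma~\ref{lemma_onlytwo} (either all dihedral angles are integer multiples of the minimal one, which is $\pi/n$, or there are exactly two distinct dihedral angles, each occurring three times), and then, in each of the handful of resulting labelings of the Coxeter diagram, exhibits a nontrivial symmetry of the diagram; Debrunner's lemma (Lemma~\ref{lemma_debrunner}) upgrades this to an isometry of $S$ identifying two edges carrying the indivisible angle, so those edges have equal length --- too few distinct lengths, contradiction. Your proposed endgame, by contrast, is not justified as stated: the further conjugates $\omega k^{1/3}$ and $\omega^2 k^{1/3}$ are non-real, and you do not explain what geometric object the conjugated ``tiling data'' is supposed to be, nor why its failure to describe a Euclidean tetrahedron is a contradiction rather than a vacuous observation. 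As it stands the proposal assembles the right raw materials but leaves the theorem unproved.
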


We give a new simple proof of Theorem~\ref{veta_3dim} in Section~\ref{section_dukaz3dim}.

Matou\v{s}ek and the second author~\cite{MS} conjectured that for $d\ge 3$, a $d$-dimensional $k$-reptile simplex can exist only for $k$ of the form $m^d$ for some positive integer $m$.
We prove a weaker version of this conjecture for four-dimensional simplices.

\begin{theorem}\label{veta_hlavni} 
Four-dimensional $k$-reptile simplices can
exist only for $k$ of the form $m^2$, where $m$ is a positive integer.
\end{theorem}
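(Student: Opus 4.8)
\emph{The plan.} I will deduce Theorem~\ref{veta_hlavni} from a number--theoretic statement about the similarity ratio of the tiling. Suppose $S\subset\R^4$ is a $k$-reptile simplex: it is tiled by $k$ mutually congruent tiles, each similar to $S$ with linear ratio $1/\rho$, and comparing $4$-dimensional volumes forces $\rho:=k^{1/4}>1$. It suffices to show that $\rho^2$ is a rational integer, since then $k=(\rho^2)^2=m^2$ with $m=\rho^2$. To get this I intend to prove two things: (i) $\rho^2$ is an algebraic integer, and (ii) $-\rho^2$ is \emph{not} an algebraic conjugate of $\rho^2$. Granting both, the minimal polynomial of $\rho^2$ over $\Q$ is a monic divisor of $x^2-k$ (because $(\rho^2)^2=k$) that contains the factor $x-\rho^2$; the only possibilities are $x-\rho^2$ and $x^2-k$, and the latter has $-\rho^2$ as its other root, so by (ii) the minimal polynomial is $x-\rho^2$. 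Hence $\rho^2\in\Q$, and being an algebraic integer it lies in $\Z$.

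\emph{Producing the algebraic integer.} For the faces: every $2$-face $G$ of $S$ is the intersection of two facets of $S$, and since each tile meets a facet of $S$ in one of its own faces, each tile meets $G$ in a face of that tile, namely a simplex of dimension $\le 2$ similar to a face of $S$ with ratio $1/\rho$. The $2$-dimensional such pieces decompose $G$ (this requires some care when the tiling is not face-to-face, but it is only the resulting area identity that is used). Enumerating the $2$-faces of $S$ as $G_1,\dots,G_{10}$ we obtain
\[
\rho^{2}\,\operatorname{area}(G_{a})=\sum_{b=1}^{10}c_{ab}\,\operatorname{area}(G_{b}),\qquad a=1,\dots,10,
\]
with nonnegative integers $c_{ab}$, not all zero in each row. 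Thus $\rho^2$ is an eigenvalue, with the strictly positive eigenvector $(\operatorname{area}(G_b))_b$, of the nonnegative integer matrix $C=(c_{ab})$; by Perron--Frobenius $\rho^2$ equals the spectral radius of $C$, hence $\rho^2$ is an algebraic integer and every conjugate of $\rho^2$ (a root of the characteristic polynomial of $C$) has absolute value at most $\rho^2$. The same reasoning along the edges of $S$ shows that $\rho$ itself is an algebraic integer, and along facets that $\rho^3$ is.

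\emph{The crux, and the conclusion.} It remains to establish (ii). If $-\rho^2$ were a conjugate of $\rho^2$, it would be an eigenvalue of $C$ of absolute value equal to the spectral radius, which by the Perron--Frobenius theory for nonnegative matrices forces a basic strongly connected component of the digraph of $C$ --- put an arc $a\to b$ whenever a scaled copy of the $2$-face $G_b$ of $S$ occurs inside $G_a$ in the induced tiling of $G_a$ --- to have spectral radius $\rho^2$ and \emph{even} period; such a component contains no loop and no odd closed walk. One must therefore extract from the geometry of the tiling a closed walk whose length is incompatible with that period, e.g.\ a $2$-face type that is forced to reappear, scaled, inside a tiling of itself. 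I expect this to be the main obstacle. It is here that dimension four and the use of $2$-faces, which have codimension $2$, enter: one analyses the tiles incident to a $2$-face $F$ of $S$, whose local picture around an interior point of $F$ is a plane times the planar cone of $S$ at $F$ (governed by the two dihedral angles of $S$ along $F$), and shows that the ten triangular $2$-face types of $S$ must nest inside one another in a way that breaks the bipartite/even-period structure. The analogous mechanism applied to edges in $\R^3$ forces $k^{1/3}$ to be rational, reproving Theorem~\ref{veta_3dim}; in $\R^4$ it pins down $\rho^2$ but not $\rho$, which is precisely why one obtains $k=m^2$ rather than the conjectured $k=m^4$. Once (ii) is in hand, the reduction in the first paragraph gives $\rho^2\in\Z$ and hence $k=m^2$.
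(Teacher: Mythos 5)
Your reduction in the first paragraph is sound but note that it carries no content: $\rho^2=\sqrt{k}$ is a root of the monic polynomial $x^2-k$, so it is automatically an algebraic integer, and since all its conjugates have modulus $\sqrt{k}$, the Perron--Frobenius bound ``every conjugate has absolute value at most $\rho^2$'' excludes nothing. The entire theorem is therefore concentrated in your step (ii), and that step is exactly the one you do not prove: you write that one ``must extract from the geometry a closed walk whose length is incompatible with that period'' and that you ``expect this to be the main obstacle.'' That expectation is correct --- it is the whole problem --- and nothing in the proposal carries it out. Your parallel claim that the same mechanism applied to edges reproves Theorem~\ref{veta_3dim} is likewise unsubstantiated; neither the original proof in~\cite{MS} nor the simplified proof in Section~\ref{section_dukaz3dim} proceeds this way, and if the periodicity argument worked that easily it would also resolve the higher-dimensional cases that Section~5 describes as wide open.

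There is also a concrete error in the one geometric step you do assert. The identity $\rho^{2}\operatorname{area}(G_a)=\sum_b c_{ab}\operatorname{area}(G_b)$ with integer $c_{ab}$ presupposes that the $2$-faces of tiles lying in $G_a$ have pairwise disjoint relative interiors, and this can fail: at a generic interior point $x$ of $G_a$ the transverse picture is the planar dihedral angle of $S$ at $G_a$, and \emph{every} tile containing $x$ contributes a $2$-face through $x$ lying in $G_a$, these tiles being stacked around $x$ inside the dihedral angle. Whenever the dihedral angle at $G_a$ decomposes into two or more dihedral angles of $S$, the $2$-faces of tiles overlap in $G_a$ with locally varying multiplicity, and the area identity becomes $\int_{G_a} m(x)\,dx$ on the left with a non-constant $m$. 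The same objection applies to your edge-length version. Repairing this forces you to restrict to faces whose angle is \emph{indivisible} (cannot be partitioned into other angles of $S$), which is precisely the starting point of the paper's argument: Lemma~\ref{lemma_at-least-4-edges} runs your eigenvalue computation only for edges sharing a fixed indivisible edge-angle, concludes that there are at least four distinct lengths of such edges because $\rho$ has degree $4$ over $\Q$, and the theorem is then obtained by a long case analysis combining Debrunner's lemma (Lemma~\ref{lemma_debrunner}) on Coxeter-diagram symmetries, a classification of tilings of spherical lunes and triangles by the indivisible edge-angle, and Fiedler's theorem (Theorem~\ref{theorem_fiedler}) to kill the surviving diagrams. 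Your route is genuinely different and would be much shorter if it worked, but as it stands the only nontrivial steps are either false as stated or deferred.
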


Four-dimensional Hill simplices are examples of $k$-reptile simplices for $k=m^4$. 
However, the following question remains open.

\begin{problem}
Is there a four-dimensional $m^2$-reptile simplex for $m$ non-square?
\end{problem}

\paragraph{New ingredients.}
Debrunner's lemma~\cite{debrunner} connects
the symmetries of a $d$-simplex with the symmetries of its Coxeter diagram
(which represents the ``arrangement'' of the dihedral angles), and is an important tool in our analysis. This lemma allows us to substantially simplify the proof of
Theorem~\ref{veta_3dim} 
and enables us to step one dimension up and prove Theorem~\ref{veta_hlavni},
which seemed unmanageable before.

In the proof of Theorem~\ref{veta_hlavni} we encounter the problem of tiling spherical triangles by congruent
triangular tiles, which might be of independent interest. 
A related question, a
classification of edge-to-edge tilings of the sphere by congruent triangles, has been
completely solved by Agaoka and Ueno~\cite{agaoka_ueno}.


\section{Basic notions and facts about simplices and group actions}

\subsection{Angles in simplices and Coxeter diagrams}

Given a $d$-dimensional simplex $S$ with vertices $v_1, v_2, \dots, v_{d+1}$, let $F_i$
be the facet opposite to $v_i$. 
If $\alpha_{i,j}$ is the angle between the normals of $F_i$ and $F_j$ pointing outward, then the \emph{dihedral angle} $\beta_{i,j}$ is defined as $\pi-\alpha_{i,j}$.
By an \emph{internal angle $\varphi$ at the point $x$ of $S$}, where $x$ is on the boundary of $S$, we mean the set $\S^{d-1}(x,\eps) \cap S$, where $\S^{d-1}(x,\eps)$ denotes
the $(d-1)$-dimensional sphere with radius $\eps$ centered at $x$, and $\eps > 0$ is small enough so that $\S^{d-1}(x,\eps)$ does not meet the facets
not containing $x$.
An \emph{edge-angle} of $S$ is the internal $(d-1)$-dimensional angle at an interior point of an edge of $S$ 
and can be represented by a $(d-2)$-dimensional spherical simplex.
Indeed, select an interior
point $x$ of the edge $e$ and consider the hyperplane $h$ orthogonal to $e$ and containing $x$. 
The edge-angle incident to $e$ can be represented as the intersection $h \cap S \cap \S^{d-1}(x,\eps)$.
This intersection is clearly $(d-2)$-dimensional and forms a spherical simplex.

From now on we normalize all edge-angles, that is, we consider them as subsets of the $(d-2)$-dimensional unit sphere.

The \emph{Coxeter diagram} of $S$ is a graph $c(S)$ with labeled edges 
such that the vertices of $c(S)$ represent the facets of $S$ and for every pair
of facets $F_i$ and $F_j$, there is an edge $e_{i,j}$ labeled 
by the dihedral angle $\beta_{i,j}$. 
Note that our labeling differs from
the traditional one, where the edge corresponding to a dihedral angle $\pi / p$
is labeled by $p$ and the label $3$ is omitted. Debrunner~\cite{debrunner}
labels the edge corresponding to a dihedral angle $\beta_{i,j}$ by $\cos
\beta_{i,j}$.

\begin{observation}
The edge-angles of a four-dimensional simplex $S$ can be represented by
spherical triangles, whose angles are dihedral angles in $S$. Therefore, an
edge-angle in $S$ represented by a spherical triangle with angles $\alpha,
\beta, \gamma$ corresponds to a triangle in the Coxeter diagram with edges
labeled by $\alpha, \beta, \gamma$.
\qed
\end{observation}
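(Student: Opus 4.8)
The plan is to verify the claim by inspecting directly the geometry of the orthogonal slice used to define an edge-angle, and then reading off the angles. Fix an edge $e$ of the four-dimensional simplex $S$; say $e=v_iv_j$ joins the vertices $v_i$ and $v_j$. First I would identify which facets contain $e$. Since the facet $F_k$ opposite $v_k$ contains all vertices except $v_k$, it contains both endpoints of $e$ precisely when $k\notin\{i,j\}$. Hence exactly three facets $F_a,F_b,F_c$, with $\{a,b,c\}=\{1,2,3,4,5\}\setminus\{i,j\}$, meet along $e$. Note also that for any two of them, say $F_a$ and $F_b$, the ridge $F_a\cap F_b$ contains all vertices except $v_a,v_b$, so it contains $v_i$ and $v_j$ and therefore $e\subseteq F_a\cap F_b$; thus $e$ lies on the ridge along which the dihedral angle $\beta_{a,b}$ is measured.

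Next I would set up coordinates with the chosen interior point $x$ of $e$ at the origin and the line through $e$ as one coordinate axis, so that the orthogonal hyperplane $h$ is identified with a copy of $\R^3$, namely $h=e^{\perp}$. The local observation is that each facet $F_k$ containing $e$ has a supporting hyperplane containing the direction of $e$, so its trace on $h$ is a half-space of $\R^3$ bounded by a $2$-plane through $x$. Therefore, near $x$, the set $h\cap S$ is the intersection of three half-spaces through the origin, that is, a trihedral cone, and its intersection with the unit sphere $\S^2\subset h$ is a spherical triangle whose three sides lie on $F_a$, $F_b$, $F_c$ respectively. This recovers the representation of the edge-angle as a spherical triangle, already recorded above for general $d$.

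The main step is to identify the interior angles of this triangle. At the vertex where the arcs coming from $F_a$ and $F_b$ meet, the spherical angle equals the angle, measured inside $h$, between the two $2$-planes $F_a\cap h$ and $F_b\cap h$, taken from inside the cone. I would then argue that this angle is exactly the dihedral angle $\beta_{a,b}$. The outward unit normals $n_a,n_b$ of $F_a,F_b$ are orthogonal to every direction lying in their respective facets, in particular to the direction of $e$; hence $n_a,n_b\in e^{\perp}=h$. Since $n_a\perp F_a\supseteq F_a\cap h$ and $n_a\in h$, the vector $n_a$ is the normal of $F_a\cap h$ within the $3$-space $h$, and similarly for $n_b$. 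Consequently the angle between the two planes in $h$ is governed by $n_a\cdot n_b$, the same quantity that determines the angle between the hyperplanes of $F_a,F_b$ in $\R^4$; the slice therefore distorts nothing, and the interior angle equals $\beta_{a,b}$. The same computation at the other two vertices yields $\beta_{b,c}$ and $\beta_{a,c}$, so the three angles of the spherical triangle are precisely the dihedral angles $\beta_{a,b},\beta_{b,c},\beta_{a,c}$.

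Finally I would translate this into the language of the Coxeter diagram. The three facets $F_a,F_b,F_c$ span a triangle in $c(S)$ whose edges $e_{a,b},e_{b,c},e_{a,c}$ carry, by definition of the labeling, the labels $\beta_{a,b},\beta_{b,c},\beta_{a,c}$. These labels coincide with the three interior angles computed above, which establishes the stated correspondence between an edge-angle represented by a spherical triangle with angles $\alpha,\beta,\gamma$ and a triangle of $c(S)$ with edges labeled $\alpha,\beta,\gamma$. The only delicate point is the claim that the dihedral angle is undistorted by the orthogonal slice $h$, and I expect this to be the step deserving the most care; as indicated, it rests exactly on the fact that the facet normals are orthogonal to $e$ and hence already lie in $h$.
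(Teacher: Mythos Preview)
Your argument is correct. The paper itself offers no proof of this observation---it simply marks it with \qed, treating it as immediate from the definitions of edge-angle and Coxeter diagram given just above---and what you have written is exactly the natural unpacking of those definitions, with the key point (that the facet normals already lie in the orthogonal slice $h$, so the dihedral angle is undistorted) made explicit.
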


Debrunner~\cite[Lemma 1]{debrunner} proved the following important lemma. Here the {\em symmetries of S\/} are Euclidean isometries, and the {\em symmetries of c(S)\/} are graph automorphisms preserving the labels of edges.

\begin{lemma}[{\bf Debrunner's lemma}~\cite{debrunner}]\label{lemma_debrunner}
Let $S$ be a $d$-dimensional simplex. The symmetries of $S$ are in one-to-one
correspondence with the symmetries 
of its Coxeter diagram $c(S)$, in the following sense: each symmetry $\varphi$ of
$S$ induces a symmetry $\Phi$ of $c(S)$ so that $\varphi(v_i)=v_j
\Leftrightarrow \Phi(F_i)=F_j$, and vice versa. 
\end{lemma}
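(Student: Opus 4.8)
The plan is to route the correspondence through the outward unit facet normals and to translate between Euclidean symmetries of $S$ and label-preserving automorphisms of $c(S)$ via the Gram matrix of these normals. Let $n_1,\dots,n_{d+1}$ be the outward unit normals of the facets $F_1,\dots,F_{d+1}$. Since $\langle n_i,n_j\rangle=\cos\alpha_{i,j}=-\cos\beta_{i,j}$, the Gram matrix of the normals is encoded by the dihedral angles, hence by $c(S)$. Any $d$ of these normals are linearly independent (the $d$ facets through a common vertex meet in a single point), so the $n_i$ span $\R^d$ and admit a linear dependency $\sum_i\lambda_i n_i=0$ that is unique up to scaling; the coefficients $\lambda_i$ may be taken to be the facet volumes, which are positive.

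For the forward direction I would argue that a symmetry $\varphi$ of $S$ permutes the vertices, hence the opposite facets, inducing a permutation $\pi$ with $\varphi(v_i)=v_{\pi(i)}$ and $\varphi(F_i)=F_{\pi(i)}$. Being an isometry, $\varphi$ preserves dihedral angles, so $\beta_{i,j}=\beta_{\pi(i),\pi(j)}$; thus $\Phi(F_i):=F_{\pi(i)}$ is a label-preserving automorphism of $c(S)$. Conversely, let $\Phi$ be such an automorphism with associated permutation $\pi$ satisfying $\beta_{i,j}=\beta_{\pi(i),\pi(j)}$. Then $\langle n_i,n_j\rangle=\langle n_{\pi(i)},n_{\pi(j)}\rangle$ for all $i,j$, and since the two families $(n_i)$ and $(n_{\pi(i)})$ both span $\R^d$ and share the same Gram matrix, there is a unique orthogonal map $O$ with $On_i=n_{\pi(i)}$ for every $i$.

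The key step is to show that this $O$, viewed as a point map, is already a symmetry of $S$, with no translation needed. Applying $O$ to the dependency gives $\sum_i\lambda_i n_{\pi(i)}=0$, that is, $\sum_j\lambda_{\pi^{-1}(j)}n_j=0$; by uniqueness of the dependency, $\lambda_{\pi^{-1}(j)}=\kappa\lambda_j$ for a constant $\kappa>0$, and summing over $j$ (the left-hand side is merely a reindexing of $\sum_j\lambda_j$) forces $\kappa=1$. Hence the facet volumes are $\pi$-invariant, $\lambda_{\pi(i)}=\lambda_i$. Now position $S$ so that its centroid is the origin; each facet lies in $\{x:\langle n_i,x\rangle=c_i\}$ with $c_i=\operatorname{dist}(\mathbf{0},F_i)=dV/((d+1)\lambda_i)$, where $V$ is the volume of $S$, so $\lambda_{\pi(i)}=\lambda_i$ yields $c_{\pi(i)}=c_i$. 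A direct computation, using $\langle n_i,O^{-1}y\rangle=\langle On_i,y\rangle=\langle n_{\pi(i)},y\rangle$, then shows that $O$ sends the half-space $\{\langle n_i,x\rangle\le c_i\}$ to $\{\langle n_{\pi(i)},y\rangle\le c_{\pi(i)}\}$; intersecting over all $i$ gives $O(S)=S$ and $O(F_i)=F_{\pi(i)}$, hence $O(v_i)=v_{\pi(i)}$. Thus $\varphi:=O$ is a symmetry of $S$ inducing $\Phi$.

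Finally I would verify that the two assignments are mutually inverse: a symmetry of $S$ is determined by its action on the affinely independent vertices, an automorphism of $c(S)$ is determined by the permutation $\pi$, and both directions preserve this underlying $\pi$, so $\varphi\leftrightarrow\Phi$ is a bijection. I expect the middle step to be the main obstacle, namely the $\pi$-invariance of the facet volumes, which is exactly what eliminates the translation part and promotes the candidate map from an affine symmetry to a rigid one. This rests entirely on the uniqueness of the Minkowski-type relation $\sum_i\lambda_i n_i=0$ among the facet normals, which is the feature guaranteeing that a purely combinatorial, angle-preserving symmetry of the Coxeter diagram lifts to a genuine Euclidean isometry of $S$.
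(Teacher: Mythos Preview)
The paper does not supply its own proof of this lemma; it is quoted as Lemma~1 of Debrunner's article and used as a black box. So there is no in-paper argument to compare against.

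Your argument is correct and self-contained. You exploit exactly the right structure: the labeled Coxeter diagram determines the Gram matrix of the outward unit facet normals, equal Gram matrices of spanning systems are related by an orthogonal map, and the one nontrivial point---why no translation is needed---is handled by the Minkowski relation $\sum_i \lambda_i n_i = 0$ with $\lambda_i = \mathrm{vol}(F_i)$, whose uniqueness up to scaling forces the $\lambda_i$ to be $\pi$-invariant, hence the centroid-to-facet distances $c_i$ as well. The final bijectivity check is routine. One minor stylistic remark: when you produce the orthogonal map $O$ from the equality of Gram matrices, it is worth saying explicitly that $O$ is first defined on $d$ of the normals (which are linearly independent) and then automatically sends the remaining normal correctly because that vector is determined by its inner products with the others; you assert uniqueness of $O$ without quite spelling this out.
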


\subsection{Existence of simplices with given dihedral angles} 

Fiedler~\cite{fiedler} proved the following elegant property of ${\binom{d+1}{2}}$-tuples of dihedral angles. A proof in English can be found in~\cite{MS}. 

\begin{theorem}[{\bf Fiedler's theorem}~\cite{fiedler}]\label{theorem_fiedler}
Let $\beta_{i,j}$, $i,j=1,2,\dots,d+1$, be the dihedral angles of some
$d$-dimensional simplex, let 
$\beta_{i,i} = \pi$ for convenience, and let $A$ be the $(d+1)\times(d+1)$
matrix with
 $a_{i,j}:=\cos\beta_{i,j}$ for all $i,j$. Then $A$ is negative semidefinite of
rank $d$, and the 
($1$-dimensional) kernel of $A$ is generated by a vector $z\in\R^{d+1}$ with all
components strictly positive.
\end{theorem}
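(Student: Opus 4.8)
The plan is to recognize the matrix $A$ as the negative of a Gram matrix and then read off all three conclusions from standard properties of Gram matrices together with one geometric identity about the outward normals of a polytope. Let $n_1,\dots,n_{d+1}$ be the outward unit normals to the facets $F_1,\dots,F_{d+1}$ of the simplex $S$. Since the angle between $n_i$ and $n_j$ is $\alpha_{i,j}$ and the dihedral angle is $\beta_{i,j}=\pi-\alpha_{i,j}$, we have $a_{i,j}=\cos\beta_{i,j}=-\cos\alpha_{i,j}=-\langle n_i,n_j\rangle$; the diagonal case $a_{i,i}=\cos\pi=-1=-\langle n_i,n_i\rangle$ is consistent with the convention $\beta_{i,i}=\pi$. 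Hence $A=-G$, where $G=(\langle n_i,n_j\rangle)_{i,j}$ is the Gram matrix of the normals.

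With this identification the first two claims are immediate. A Gram matrix is always positive semidefinite, so $A=-G$ is negative semidefinite. The rank of $G$ equals the dimension of the linear span of $n_1,\dots,n_{d+1}$; I would argue that this span is all of $\R^d$, since a nonzero vector $u$ orthogonal to every $n_i$ would be parallel to all facet hyperplanes, forcing $S$ to contain the whole line through any of its points in direction $u$, contradicting boundedness. Thus $\operatorname{rank}A=\operatorname{rank}G=d$ and the kernel of $A$ is one-dimensional.

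For the last claim I would exhibit an explicit generator of the kernel using Minkowski's relation for the facets: if $V_i$ denotes the $(d-1)$-dimensional volume of $F_i$, then $\sum_{i=1}^{d+1}V_i\,n_i=0$. Setting $z=(V_1,\dots,V_{d+1})^\top$, the $i$-th coordinate of $Gz$ is $\sum_j\langle n_i,n_j\rangle V_j=\langle n_i,\sum_j V_j n_j\rangle=0$, so $Az=-Gz=0$. Since the kernel is one-dimensional, $z$ generates it, and all its components $V_i$ are strictly positive because every facet of a nondegenerate simplex has positive volume. This yields the desired positive kernel vector.

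The main obstacle is justifying Minkowski's relation $\sum_i V_i n_i=0$ cleanly. I expect the shortest route is the divergence theorem applied to an arbitrary constant vector field $e$: integrating $\operatorname{div} e=0$ over $S$ gives $0=\int_{\partial S}\langle e,n\rangle\,dS=\sum_i V_i\langle e,n_i\rangle=\langle e,\sum_i V_i n_i\rangle$, and since $e$ is arbitrary the inner sum must vanish. (Alternatively one can prove the identity combinatorially by projecting the oriented facets onto a coordinate hyperplane and cancelling the signed contributions.) Everything else is routine linear algebra, so once this identity and the full-rank claim for the normals are in place, the theorem follows.
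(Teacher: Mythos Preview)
Your proof is correct. The paper itself does not prove this theorem: it is quoted from Fiedler~\cite{fiedler}, with a pointer to~\cite{MS} for a proof in English, and the authors only use the singularity of $A$. So there is no ``paper's own proof'' to compare against here.

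That said, your argument is exactly the standard one and is essentially what appears in the references: identify $A$ with $-G$ where $G$ is the Gram matrix of the outward unit facet normals, use that $G$ is positive semidefinite of rank equal to the dimension of the span of the normals (which is $d$ since the simplex is bounded), and exhibit the positive kernel vector via Minkowski's relation $\sum_i V_i n_i=0$. Both the divergence-theorem justification of Minkowski's identity and the boundedness argument for full rank are fine. Nothing is missing.
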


In our proof of Theorem~\ref{veta_hlavni} we use only the fact that the matrix $A$ defined in Theorem~\ref{theorem_fiedler} is singular; indeed, it is a $(d+1)\times(d+1)$ matrix of rank $d$.

\subsection{Group actions}

An \emph{action} $\varphi$ of a group $G$ on a set $M$ is a homomorphism from $G$ to the symmetric group $\Sym(M)$ of $M$, where the symmetric group $\Sym(M)$ is the group of all permutations of $M$.
We say that an action $\varphi$ of $G$ on $M$ is \emph{faithful} if its kernel is trivial. In other words,
$\varphi$ is faithful if for every $g\neq 1$ there exists an element $m\in M$ with $\varphi(g)(m)\neq m$. 
It is usual to omit $\varphi$ and write just $gm$ instead of $\varphi(g)(m)$. 

The set $Gm:=\{gm\colon g\in G\}$ is called the \emph{orbit} of the element $m$ under the action of $G$. It is obvious that the set of orbits forms a partition of $M$. 
The following well-known lemma counts the number of orbits in the partition. 

\begin{lemma}[{\bf Burnside's lemma}~\cite{burnside}]
Let $M$ be a finite set  and $G$ a finite group acting on $M$ via $m \mapsto gm$.
 Let $X_g $ be the number of elements of $M$ fixed by $g$, that is, those satisfying the identity $gm=m$.
 Then the action  of $G$ on $M$ has exactly $\frac{1}{|G|}\sum_{g\in G}X_g$ orbits.
 \end{lemma}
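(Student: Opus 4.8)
The plan is to prove Burnside's lemma by a double-counting argument applied to the set of fixed incidences, with the orbit-stabilizer relation as the only nontrivial ingredient. First I would introduce the set
\[
P = \{(g,m)\in G\times M : gm=m\}
\]
and compute $|P|$ in two ways.

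Counting first over the group, for each fixed $g\in G$ the number of $m\in M$ with $gm=m$ is by definition $X_g$, so $|P| = \sum_{g\in G} X_g$. Counting instead over the set $M$, for each $m\in M$ the elements $g\in G$ with $gm=m$ form the stabilizer $G_m=\{g\in G : gm=m\}$, a subgroup of $G$; hence $|P| = \sum_{m\in M} |G_m|$. Equating the two expressions already reduces the lemma to evaluating $\sum_{m\in M}|G_m|$.

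The key step is to rewrite this sum orbit by orbit. Here I would first recall the orbit-stabilizer relation $|G| = |Gm|\cdot|G_m|$, which follows from checking that $gG_m \mapsto gm$ is a well-defined bijection from the left cosets of $G_m$ onto the orbit $Gm$ (well-definedness and injectivity both amount to the equivalence $g_1m=g_2m \iff g_1G_m=g_2G_m$). Substituting $|G_m| = |G|/|Gm|$ gives
\[
|P| = |G|\sum_{m\in M}\frac{1}{|Gm|}.
\]
Since the orbits partition $M$ and every point $m$ of a given orbit $O$ satisfies $|Gm|=|O|$, each orbit contributes $\sum_{m\in O}1/|O| = 1$ to the sum, so $\sum_{m\in M}1/|Gm|$ equals the number of orbits. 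Combining this with $|P|=\sum_{g\in G}X_g$ and dividing by $|G|$ yields the stated formula.

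The argument presents no real obstacle, as it is purely finite combinatorics; the only point requiring care is the orbit-stabilizer bijection, where one must verify that $gG_m\mapsto gm$ is both well defined and injective. Finiteness of $G$ and $M$ guarantees that all the sums and cardinalities involved are finite, so no convergence or well-definedness issues arise.
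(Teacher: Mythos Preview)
Your argument is the standard double-counting proof of Burnside's lemma and is correct. Note, however, that the paper does not actually prove this statement: it merely quotes it as a classical result with a reference, so there is no proof in the paper to compare against. Your write-up would serve perfectly well as a self-contained justification if one were desired.
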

 
 We will need the following lemma:
 
\begin{lemma}\label{l_pocet_orbit}
 Let $M$ be a finite set  and $G$ a finite group acting on $M$ nontrivially and faithfully via $m \mapsto gm$. 
 Then $G$ also acts on the (unordered) pairs $\{m,n\} \in \binom{M}{2}$ via $g\{m,n\}=\{gm,gn\}$ and
 the action of $G$ on pairs has at most $\binom{|M|}{2} - |M| + 2$ orbits.
 Moreover, the bound is  tight and it is achieved if 
 the image of $G$ under the action is generated by a single transposition.
\end{lemma}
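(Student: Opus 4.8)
The plan is to combine Burnside's lemma with a careful analysis of how a nontrivial faithful action on $M$ induces an action on $\binom{M}{2}$. First I would translate the claim: if $G$ acts on $M$ with $|M|=t$, then by Burnside the number of orbits on $\binom{M}{2}$ equals $\frac{1}{|G|}\sum_{g\in G} Y_g$, where $Y_g = \binom{f(g)}{2} + (\text{number of }2\text{-cycles of }g)$ counts the pairs fixed by $g$ (a pair $\{m,n\}$ is fixed either when both points are fixed, or when $g$ swaps them), and $f(g)$ denotes the number of points of $M$ fixed by $g$. For the identity this gives $Y_1 = \binom{t}{2}$. The goal is to show $\frac{1}{|G|}\sum_g Y_g \le \binom{t}{2} - t + 2$, i.e. that the non-identity elements contribute, on average, a deficit of at least $t-2$ relative to $\binom{t}{2}$; equivalently $\sum_{g\ne 1}\bigl(\binom{t}{2} - Y_g\bigr) \ge (|G|-1)(t-2)$.

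The key step is a pointwise lower bound on the deficit $D_g := \binom{t}{2} - Y_g$ for each $g\ne 1$. Write $g$ as a permutation of $M$ with $f$ fixed points, $s$ transpositions, and the remaining cycles of length $\ge 3$ (or $\ge 2$ beyond those counted). Then $Y_g = \binom{f}{2} + s$, and a direct count shows $D_g = \binom{t}{2} - \binom{f}{2} - s$. Since $g\ne 1$ and the action is faithful and nontrivial, $g$ moves at least two points, so $f \le t-2$. I would then check the extremal behavior: $D_g$ is minimized, for fixed $t$, when $g$ is a single transposition, giving $f = t-2$, $s=1$, hence $D_g = \binom{t}{2} - \binom{t-2}{2} - 1 = (2t-3) - 1 = 2t-4 = 2(t-2)$. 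For any other non-identity $g$ one has $D_g \ge 2(t-2)$ as well — indeed moving more points only decreases $\binom{f}{2}$ faster than $s$ can grow, since each additional transposition replaces a pair of fixed points (losing $\ge t-2$-ish from the $\binom{f}{2}$ term after the first) while adding only $1$ to $s$; a short case analysis on the cycle type handles cycles of length $\ge 3$ (which contribute $0$ to $s$). Thus $D_g \ge 2(t-2)$ for every $g\ne 1$, hence
\[
\frac{1}{|G|}\sum_{g\in G} Y_g \;=\; \binom{t}{2} - \frac{1}{|G|}\sum_{g\ne 1} D_g \;\le\; \binom{t}{2} - \frac{(|G|-1)\cdot 2(t-2)}{|G|} \;=\; \binom{t}{2} - 2(t-2)\cdot\frac{|G|-1}{|G|}.
\]
This is not quite the claimed bound $\binom{t}{2}-t+2 = \binom{t}{2}-(t-2)$, so I need the factor $2\cdot\frac{|G|-1}{|G|} \ge 1$, which holds exactly when $|G|\ge 2$ — and $G$ is nontrivial, so $|G|\ge 2$. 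Therefore the number of orbits on pairs is at most $\binom{t}{2} - (t-2) = \binom{|M|}{2} - |M| + 2$, as required.

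For tightness, I would exhibit the case where the image of $G$ in $\Sym(M)$ is generated by a single transposition $\tau$, so the image has order $2$. Replacing $G$ by this image (which does not change the orbits on $M$ or on $\binom{M}{2}$), Burnside gives $\frac12\bigl(\binom{t}{2} + Y_\tau\bigr) = \frac12\bigl(\binom{t}{2} + \binom{t-2}{2} + 1\bigr)$; a direct simplification of $\binom{t}{2}+\binom{t-2}{2}+1 = t^2 - 3t + 4 - (t-2) \cdot 2 + \dots$ — more cleanly, $\binom{t}{2} - \binom{t-2}{2} = 2t-3$, so $\binom{t-2}{2} + 1 = \binom{t}{2} - 2t + 4$, and the average is $\binom{t}{2} - t + 2$, matching the bound exactly. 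The main obstacle is the pointwise case analysis establishing $D_g \ge 2(t-2)$ uniformly over all non-identity cycle types; everything else is bookkeeping with Burnside's lemma. One subtlety to be careful about: the statement says the action of $G$ on pairs — not of its image — but since the kernel of the $M$-action acts trivially on pairs too, and the number of orbits depends only on the image, passing to the image is harmless and should be stated explicitly.
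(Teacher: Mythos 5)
Your argument is correct and lands on the same bound as the paper, but the two proofs assemble the estimate differently. Both begin identically: apply Burnside's lemma to the action on $\binom{M}{2}$, writing the number of pairs fixed by $g$ as $\binom{X_g}{2}$ plus the number of $2$-cycles of $g$ (the paper expresses the latter as $\tfrac{1}{2}(X_{g^2}-X_g)$), and both use faithfulness and nontriviality only through $X_g\le |M|-2$ for $g\ne 1$ and $|G|\ge 2$. The paper then works globally: it bounds $\sum_{g}X_g^2\le(|M|-2)\sum_{g}X_g+2|M|$, uses $X_{g^2}\le|M|$, and needs a second application of Burnside's lemma (to the action on $M$) to invoke $o_1\le|M|-1$. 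You instead prove the pointwise deficit bound $D_g=\binom{t}{2}-Y_g\ge 2(t-2)$ for every $g\ne 1$ and average; this uses Burnside only once and makes the equality case transparent, since a single transposition attains $D_g=2(t-2)$ exactly and $|G|=2$ turns the factor $2\cdot\frac{|G|-1}{|G|}$ into $1$. The only step you leave as a sketch is the deficit bound itself, and your heuristic (``each extra transposition costs $2f-3$ from $\binom{f}{2}$ but adds only $1$ to $s$'') is slightly misleading for small $f$, though the net change $2f-4\ge 0$ still goes the right way. A clean way to close it: with $m=t-f\ge 2$ moved points one has $\binom{t}{2}-\binom{f}{2}=\tfrac{m(2t-m-1)}{2}$, and $s\le m/2$ for $m$ even while $s\le(m-3)/2$ for $m$ odd (an odd number of moved points forces a cycle of length at least $3$); hence $D_g\ge\tfrac{m(2t-m-2)}{2}$, respectively $D_g\ge\tfrac{m(2t-m-2)+3}{2}$, and since $m\mapsto m(2t-m-2)$ is concave with its minimum over $2\le m\le t$ at an endpoint, both expressions are at least $2(t-2)$. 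With that detail filled in, your proof is complete and, if anything, a little more economical than the paper's.
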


\begin{proof}
 Let $o_1$ be the number of orbits of the action of $G$ on $M$. Let $o_2$ be the number of orbits of the induced action of $G$ on $\binom{M}{2}$. Since the action on $M$ is nontrivial, we have $o_1 \le |M|-1$.
 
 Let $X_g $ be the number of elements of $M$ fixed by $g$. 
 We show that the number of elements of $\binom{M}{2}$ fixed by $g$ is $\binom{X_g}{2} + \frac{1}{2}(X_{g^2}-X_g)=\frac{1}{2}(X^2_{g}+X_{g^2})-X_g$.
 Indeed, there are two possibilities for stabilizing the pair $\{m,n\}$:
 \begin{enumerate}
  \item $gm=m$ and $gn=n$; this gives $\binom{X_g}{2}$ fixed elements of $\binom{M}{2}$.
  \item $gm=n$ and $gn=m$; this can be rewritten as $ggn=n$ and $gn\neq n$. 
  Thus in this case we have 
$\frac{1}{2}(X_{g^2}-X_g)$ fixed elements of $\binom{M}{2}$.
 \end{enumerate}
By Burnside's lemma, we have
\begin{equation}
 o_2 = \frac{1}{|G|}\sum_{g \in G}\left(\frac{1}{2}(X^2_{g}+X_{g^2})-X_g\right).\label{eq:o_2}
\end{equation}
In order to bound \eqref{eq:o_2}, we need to bound $\sum X_{g}^2$ in terms of $\sum X_g$:
\begin{equation}
 \sum_{g \in G}X^2_g \le (|M|-2)\sum_{g \in G}X_g + 2|M|. \label{eq:2}
\end{equation}
Indeed, the action is faithful and nontrivial, hence $X_{1}=|M|$ and $X_{g} \le |M|-2$ otherwise. Using $\sum_{g \neq 1}X_g^2 \le (|M|-2)\sum_{g \neq 1}X_g$, the bound \eqref{eq:2} follows.

Plugging \eqref{eq:2} into \eqref{eq:o_2} and using $X_{g^2} \le |M|$ we have
\begin{align*}
 o_2 & =  \frac{1}{2|G|}\sum_{g \in G} X_g^2 + \frac{1}{2|G|}\sum_{g \in G} X_{g^2} - \frac{1}{|G|}\sum_{g \in G} X_g \\
 &\le  \frac{|M|}{2|G|}\sum_{g \in G} X_g - \frac{2}{|G|}\sum_{g \in G} X_g + \frac{|M|}{|G|} + \frac{|M|}{2}.
\end{align*}
By Burnside's lemma for the action on $M$, we have $\frac{1}{|G|}\sum X_g=o_1\le|M|-1$. Since$|G| \ge 2$, we get the desired bound:
\[
 o_2 \le \frac{(|M|-4)(|M|-1)}{2} + |M| = \binom{|M|}{2} - |M| + 2.
\]

 It remains to show the last part of the statement. But this is clear, since a single transposition swaps $|M|-2$ pairs of edges. 
\end{proof}

\section{A simple proof of Theorem~\ref{veta_3dim}}\label{section_dukaz3dim} 
We proceed as in the original proof~\cite{MS}, but instead of using the theory of scissors
congruence, Jahnel's 
theorem about values of rational angles and Fiedler's theorem, we only use
Debrunner's lemma (Lemma~\ref{lemma_debrunner}). 

Assume for contradiction that $S$ is a $k$-reptile tetrahedron where $k$ is not
a third power of 
a positive integer. A dihedral angle $\alpha$ is called \emph{indivisible} if it
cannot be written 
as a linear combination of other dihedral angles in $S$ with nonnegative integer
coefficients. 

The following lemmas are proved in~\cite{MS}.

\begin{lemma}\label{lemma_aspon3}{\rm\cite[Lemma 3.1]{MS}}
If $\alpha$ is an indivisible dihedral angle in $S$, then the edges of $S$ with
dihedral angle $\alpha$ have at least three different lengths.
\end{lemma}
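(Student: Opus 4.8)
The plan is to combine the fixed scaling ratio of the tiles with a rigidity phenomenon along edges carrying an indivisible dihedral angle. Since the $k$ tiles are mutually congruent and each is similar to $S$, they all have the same similarity ratio $c$, and comparing volumes gives $c=k^{-1/3}$; as $k$ is not a perfect cube, $1/c=k^{1/3}$ has degree exactly $3$ over $\Q$ (the polynomial $x^{3}-k$ has no rational root and is therefore its minimal polynomial). This is the fact we shall contradict.

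First I would analyze the tiling near an interior point $x$ of an edge $e$ of $S$ whose dihedral angle is $\alpha$. Locally $S$ is a wedge whose spine lies on the line through $e$, and the slice of $S$ by the plane through $x$ orthogonal to $e$ is a planar circular sector of angle $\alpha$. For all but finitely many choices of $x$ the tiling is prismatic over this slice (the tiling is locally finite), so the tiles meeting $x$ partition the sector of angle $\alpha$. A convexity argument rules out any tile meeting $x$ in the relative interior of a facet or in its own interior (such a tile would contain a half-disk, respectively a disk, centered at a point on the spine of the wedge), and shows that any edge of a tile that passes through $x$ and is contained in $S$ must be collinear with $e$. Hence each tile meeting $x$ contributes to the sector an angle equal to one of \emph{its} dihedral angles, which by similarity is one of the dihedral angles of $S$; these angles sum to $\alpha$, and since $\alpha$ is indivisible the only possibility is a single tile contributing the full angle $\alpha$. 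Consequently $e$ is partitioned into finitely many subsegments, each of which is an $\alpha$-edge of some tile, and hence has length $c$ times the length of some $\alpha$-edge of $S$.

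Next I would convert this into linear algebra. Let $L_{1}<L_{2}<\dots<L_{p}$ be the distinct lengths occurring among the $\alpha$-edges of $S$, and for each $j$ choose an $\alpha$-edge $e_{j}$ of length $L_{j}$. Summing the lengths of the subsegments of $e_{j}$ produced above gives $L_{j}=c\sum_{m=1}^{p}n_{jm}L_{m}$ with nonnegative integers $n_{jm}$ (at least one positive in each row). Thus the nonnegative integer matrix $N=(n_{jm})$ satisfies $N\mathbf{L}=(1/c)\mathbf{L}$, where $\mathbf{L}=(L_{1},\dots,L_{p})$ is a positive vector, so $1/c$ is an eigenvalue of $N$ and hence an algebraic integer of degree at most $p$ over $\Q$. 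Since $1/c=k^{1/3}$ has degree exactly $3$, we conclude $p\ge 3$, which is precisely the claim.

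I expect the main obstacle to be the geometric step: making the local picture along $e$ rigorous — justifying the prismatic structure away from finitely many points and excluding tiles that meet $e$ ``transversally'' through a facet or a skew edge — so that indivisibility of $\alpha$ can genuinely be applied to a decomposition of the planar angle $\alpha$ into dihedral angles of $S$. Once this is established, what remains is bookkeeping together with the elementary fact that $k^{1/3}$ has degree $3$ over $\Q$ whenever $k$ is not a perfect cube.
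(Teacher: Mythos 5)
Your proof is correct and follows essentially the same route as the paper, which defers this lemma to [MS] but gives the analogous argument in the proof of Lemma~\ref{lemma_at-least-4-edges}: indivisibility forces each $\alpha$-edge to be tiled by $\alpha$-edges of the small copies, yielding a nonnegative integer matrix with eigenvalue $k^{1/3}$ of algebraic degree $3$, whence at least three distinct lengths. Your write-up merely supplies more detail on the local geometric step at an interior point of the edge, which the paper leaves implicit.
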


Lemma~\ref{lemma_aspon3} is analogous to Lemma~\ref{lemma_at-least-4-edges}, which
we prove in the next section. 

\begin{lemma}\label{lemma_onlytwo}{\rm\cite[Lemma 3.3]{MS}}
One of the following two possibilities occur: 
\begin{enumerate} 

\item[\rm(i)] All the dihedral angles of $S$ are integer multiples of the
minimal dihedral angle 
$\alpha$, which has the form $\frac\pi n$ for an integer $n\ge 3$. 

\item[\rm(ii)] There are exactly two distinct dihedral angles $\beta_1$ and
$\beta_2$, each
of them occurring three times in $S$. 
\end{enumerate}
\end{lemma}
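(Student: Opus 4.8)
The plan, following Matou\v{s}ek and the second author~\cite{MS}, is to combine the multiplicity bound of Lemma~\ref{lemma_aspon3} with a short combinatorial ``divisibility propagation'' and with the elementary angle-sum relations imposed by the tiling, the latter pinned down by the rank assertion of Fiedler's theorem (Theorem~\ref{theorem_fiedler}). First I fix notation: a tetrahedron has six edges, and I list the corresponding dihedral angles with multiplicity as $\alpha=\gamma_1\le\gamma_2\le\cdots\le\gamma_6$, so that $\alpha$ is the minimal dihedral angle; all $\gamma_i$ lie in $(0,\pi)$. As the smallest value, $\alpha$ cannot be written as a nonnegative integer combination of the remaining, strictly larger, positive dihedral angles, so $\alpha$ is indivisible, and Lemma~\ref{lemma_aspon3} gives that the edges carrying $\alpha$ have at least three distinct lengths; in particular $\alpha$ occurs on at least three of the six edges. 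More generally, writing the \emph{distinct} values as $\alpha=\delta_1<\delta_2<\cdots<\delta_r$ with multiplicities $m_1,\dots,m_r$ (so $\sum_j m_j=6$), Lemma~\ref{lemma_aspon3} tells us that every indivisible $\delta_j$ has $m_j\ge 3$.

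The combinatorial core is the dichotomy: either $r=2$ with $m_1=m_2=3$, or every $\gamma_i$ is a positive integer multiple of $\alpha$. Indeed, $m_1\ge 3$; if in addition some $\delta_j$ with $j\ge 2$ has $m_j\ge 3$, then $m_1+m_j=6$ forces $m_1=m_j=3$ and $r=2$, which is possibility~(ii). Otherwise $m_j\le 2$, hence $\delta_j$ is divisible, for every $j\ge 2$; writing $\delta_j=\sum_{i\ne j}c_i\delta_i$ with $c_i\in\Z_{\ge 0}$ and observing that a positive coefficient on any $\delta_i>\delta_j$ would already push the sum above $\delta_j$, we get $\delta_j=\sum_{i<j}c_i\delta_i$, so induction on $j$ (anchored at $\delta_1=\alpha$) shows that every $\delta_j$, and thus every $\gamma_i$, is an integer multiple of $\alpha$.

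It then remains, in the second alternative, to show that $\alpha=\pi/n$ for an integer $n\ge 3$; this is where the tiling itself enters. Let the tiles be similar to $S$ with ratio $1/c$, where $c>1$ since $k\ge 2$, so each tile has diameter $\operatorname{diam}(S)/c<\operatorname{diam}(S)$. A $2$-face $F$ of $S$ containing a longest edge of $S$ has $\operatorname{diam}(F)=\operatorname{diam}(S)$, so no single tile section covers $F$; hence the tiling cuts $F$ into at least two convex polygons, and the resulting planar subdivision of $F$ has an edge whose relative interior lies in the relative interior of $F$. Along a generic sub-segment of such an edge, every tile meeting it has an edge of the tile running along it, and the dihedral angles of those tiles --- which are dihedral angles of $S$, as the tiles are similar to $S$ --- tile the half-plane orthogonal to the segment inside $S$, so they sum to $\pi$. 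Assuming all dihedral angles of $S$ are multiples of $\alpha$, this gives $\pi=M\alpha$ with $M\in\Z_{\ge 1}$. Here $M=1$ is impossible since dihedral angles of a simplex are $<\pi$, and $M=2$ would make every dihedral angle of $S$ equal to $\pi/2$, so the matrix $A$ of Theorem~\ref{theorem_fiedler} would be $-I_4$, of rank $4$, contradicting that $A$ has rank $3$. Hence $M\ge 3$ and $\alpha=\pi/M$, which is possibility~(i).

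The step requiring the most care is the last one: since the reptile tiling need not be face-to-face, justifying the local picture of the tiling along an edge of the subdivision --- choosing the sub-segment generically so as to exclude tile vertices and lower-dimensional incidences, and only then reading off that the surrounding tile dihedral angles sum to $\pi$ (or to $2\pi$ for a segment in the interior of $S$) --- is the one genuinely geometric ingredient, and it is exactly the argument carried out in~\cite{MS}.
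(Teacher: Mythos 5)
Your proposal is correct. Note that the paper offers no proof of this lemma of its own---it is quoted from \cite{MS} with the remark that ``the following lemmas are proved in \cite{MS}''---so the only comparison available is with that source, and your argument is essentially the standard one: the minimal angle is indivisible, so by Lemma~\ref{lemma_aspon3} it occupies at least three edges; a second value of multiplicity at least three forces alternative (ii), while otherwise every other value has multiplicity at most two, is therefore divisible, and divisibility propagates downward to make everything a multiple of $\alpha$; finally the tiling forces $\pi=M\alpha$ along a tile edge interior to a $2$-face, with $M=1$ excluded by $\alpha<\pi$ and $M=2$ excluded because $A=-I_4$ violates the rank-$3$ conclusion of Theorem~\ref{theorem_fiedler}. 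The one step you rightly flag as delicate (that the induced subdivision of the $2$-face consists of whole facets of tiles, so the interior subdivision edge is genuinely a tile edge at which dihedral angles sum to $\pi$) is handled correctly by your genericity choice and matches what \cite{MS} does.
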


First we exclude case (ii) of Lemma~\ref{lemma_onlytwo}. If $S$ has two distinct
dihedral angles $\beta_1 \neq \beta_2$, each occurring at three edges,
then they can be placed in $S$ in two essentially different ways; see Figure~\ref{fig_tri-pa}. 
In both cases, for each $i \in \{1,2\}$, the Coxeter diagram of $S$ has at least
one nontrivial symmetry swapping two distinct edges with label $\beta_i$. By
Debrunner's lemma, the corresponding symmetry of $S$ swaps two distinct edges
with dihedral angle $\beta_i$, which thus have the same length. But then the
edges with dihedral angle $\beta_i$ have at most two different lengths and this
contradicts Lemma~\ref{lemma_aspon3}, since the smaller of the two angles $\beta_1,\beta_2$ is indivisible.

\begin{figure}[htb]
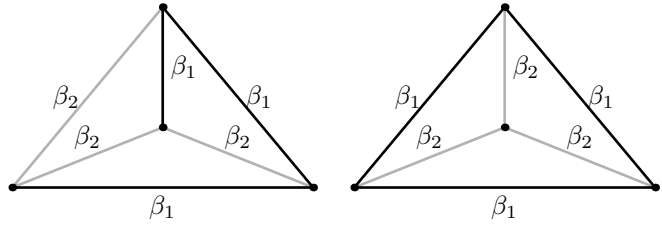

 \begin{center}
  \begin{tabular}{cc}
 \includegraphics{obrazky/coxeter-3d.0}  &
\includegraphics{obrazky/coxeter-3d.1}  \\
  \end{tabular}
\caption{Two possible configurations of two dihedral angles.}
\label{fig_tri-pa}
 \end{center}
\end{figure}

Now we exclude case (i) of Lemma~\ref{lemma_onlytwo}. Call the edges of $S$ (and of
$c(S)$) with dihedral angle $\alpha$ the {\em $\alpha$-edges}. Since there are
at least three $\alpha$-edges in $S$, there is a vertex $v$ of $S$ where two
$\alpha$-edges meet. Let $\beta$ be the dihedral angle of the third edge
incident to $v$ (possibly $\beta$ can be equal to $\alpha$). 
In the proof of Lemma 3.5 in~\cite{MS} it was shown that $\beta=\pi-\alpha$. 
For the Coxeter diagram of $S$, this implies that whenever two $\alpha$-edges meet in $c(S)$, then the label of the edge forming a triangle with the two $\alpha$-edges is $\beta$.

Now we distinguish several cases depending on the subgraph $H_{\alpha}$ of
$c(S)$ formed by the $\alpha$-edges. 

\begin{itemize}
\item $H_{\alpha}$ contains three edges incident to a common vertex.
Then all the other edges must be labeled with
$\beta$ and thus we get a configuration like in Figure~\ref{fig_tri-pa}, right, which
we excluded earlier. 

\item $H_{\alpha}$ contains a triangle.
Then $\beta=\alpha$, and thus $\alpha=\frac\pi2$,
which contradicts the condition $n\ge 3$ from Lemma~\ref{lemma_onlytwo}(i).
 
\item $H_{\alpha}$ contains a path of length $3$.
Then two other edges have label $\beta$
and the remaining edge has some label $\gamma$ (possibly $\gamma$ can be equal to $\alpha$). See Figure~\ref{fig_four}, left. 
The symmetric group of the resulting Coxeter diagram always contains an involution swapping two $\alpha$-edges. 
 Unless $\gamma = \alpha$, there are, by Debrunner's lemma, only two $\alpha$-edge lengths; a contradiction with Lemma~\ref{lemma_aspon3}.
For $\gamma = \alpha$ the Coxeter diagram has a dihedral symmetry
group, $D_4$, acting transitively on the $\alpha$-edges; see Figure~\ref{fig_four}, right.  This again contradicts
Lemma~\ref{lemma_aspon3}, since by Debrunner's lemma, all
the $\alpha$-edges have the same length.

\end{itemize}

\begin{figure}[htb]
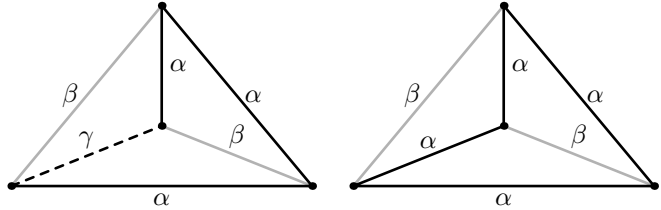

 \begin{center}
 \begin{tabular}{cc}
 \includegraphics{obrazky/coxeter-3d.2}  &
\includegraphics{obrazky/coxeter-3d.3}  \\
  \end{tabular}
\caption{The $\alpha$-edges form a path (left) or a four-cycle (right) in
$c(S)$.}
\label{fig_four}
\end{center}
\end{figure}

We obtained a contradiction in each of the cases, hence the proof of
Theorem~\ref{veta_3dim} is finished.


\section{The proof of Theorem~\ref{veta_hlavni}}\label{s:main_pf}

The method of the proof is similar to the three-dimensional case~\cite{MS}. 

Assume for contradiction that $S$ is a four-dimensional $k$-reptile simplex
where $k$ is not a square of a positive integer.
Let $S_1,S_2,\dots,S_k$ be mutually congruent simplices similar to $S$ that form a tiling of $S$. Then each $S_i$ has volume $k$-times smaller than $S$, and thus $S_i$ is
scaled by the ratio $\rho:=k^{-1/4}$ compared to $S$. For $k$ non-square, $\rho$
is an irrational number of algebraic degree 4 over $\Q$.

Similarly to~\cite{MS} we define an {\em indivisible\/} edge-angle (spherical
triangle) as a spherical triangle that cannot be tiled with smaller spherical
triangles representing the other edge-angles of $S$ or their mirror images.
Clearly, the edge-angle with the smallest spherical area is indivisible. In this paper 
we always consider a spherical triangle and its mirror image as the
same spherical triangle.

\begin{lemma}\label{lemma_at-least-4-edges}
If $\T_0$ is an indivisible edge-angle in $S$, then the edges of $S$ with
edge-angle $\T_0$ have at least four different lengths (and in particular, there
are at least four such edges). 
\end{lemma}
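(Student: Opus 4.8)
The plan is to mimic the three-dimensional argument of Lemma~\ref{lemma_aspon3} but track lengths through the similarity ratio $\rho=k^{-1/4}$, whose algebraic degree over $\Q$ is now $4$ rather than $3$. Fix an indivisible edge-angle $\T_0$ and let $e$ be an edge of $S$ carrying edge-angle $\T_0$; let $\ell$ be its length. In the tiling of $S$ by the congruent copies $S_1,\dots,S_k$, consider the tiles incident to a generic interior point $x$ of $e$: locally near $x$ the tiling induces a tiling of the edge-angle $\T_0$ (a spherical triangle) by spherical triangles that are edge-angles of the $S_i$, hence edge-angles of $S$ (up to congruence). Since $\T_0$ is indivisible, every tile in this local picture must carry the same spherical triangle $\T_0$, so $e$ lies in the relative interior of edges of the tiles $S_i$ which all have edge-angle $\T_0$. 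Each such tile-edge has length $\rho^{a}\ell'$ for some edge $e'$ of $S$ with edge-angle $\T_0$ and some nonnegative integer $a\ge 1$ (the tiles are strictly smaller, and scaled copies of scaled copies give powers of $\rho$); and the lengths of the finitely many sub-edges covering $e$ must sum to $\ell$.

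The combinatorial heart is then a linear-independence argument: suppose for contradiction that the edges of $S$ with edge-angle $\T_0$ have at most three distinct lengths $\ell_1,\ell_2,\ell_3$. Then $\ell$ (which equals one of the $\ell_i$) is an integer combination $\sum_j c_j \rho^{a_j}\ell_{t_j}$ with $c_j\ge 1$, $a_j\ge 1$, and $\ell_{t_j}\in\{\ell_1,\ell_2,\ell_3\}$. Normalising by $\ell_1$, one gets a relation among $1$, $\ell_2/\ell_1$, $\ell_3/\ell_1$ with coefficients that are polynomials in $\rho$ of degree $\ge 1$ with nonnegative integer coefficients. The strategy is to push this to a contradiction by considering the $\Q$-vector space spanned by $1,\rho,\rho^2,\rho^3$ together with the ratios $\ell_i/\ell_1$: because $\rho$ has degree exactly $4$, a relation in which every occurrence of a length is multiplied by a strictly positive power of $\rho$ forces, after comparing "constant parts," a contradiction with the positivity/minimality — just as in the $3$-dimensional proof one uses that $\ell = (\text{something})\cdot\rho^{\ge 1}\cdot(\text{lengths})$ is impossible when $\rho<1$ and all coefficients are positive unless there are enough independent lengths to absorb the different $\rho$-powers. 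More precisely, I would set up the argument so that having only $t$ distinct lengths allows only $t$ "degrees of freedom," while the degree-$4$ irrationality of $\rho$ demands at least $4$; hence $t\ge 4$.

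Concretely, I expect the cleanest route is: collect, over \emph{all} edges $e$ of $S$ with edge-angle $\T_0$, the tiling relations $\ell(e)=\sum \rho^{a}\,\ell(e')$; write these as a matrix equation $L = P(\rho)\,L$ where $L$ is the vector of the (at most three) distinct lengths and $P(\rho)$ is a square matrix whose entries are polynomials in $\rho$ with nonnegative integer coefficients and \emph{no constant term} (every exponent $a\ge 1$). Then $(\mathrm{Id}-P(\rho))L=0$ with $L\ne 0$, so $\det(\mathrm{Id}-P(\rho))=0$; but $\det(\mathrm{Id}-P(\rho)) = 1 + (\text{polynomial in }\rho\text{ with no constant term})$, which for $\rho$ a root cannot vanish unless that polynomial has degree $\ge 1$ and we can further bound things — here one uses $0<\rho<1$ together with the fact that $\mathrm{Id}-P(\rho)$ is (entrywise) close to $\mathrm{Id}$, or invokes the algebraic degree: $\det(\mathrm{Id}-P(x))$ is a nonzero integer polynomial with constant term $1$, so its roots are units with all conjugates; since $\rho=k^{-1/4}$ has minimal polynomial $k x^4 - 1$ (for $k$ non-square, and in fact non-fourth-power; the square case is what we are proving absent), this is impossible for size reasons when the matrix is small. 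Making this size bookkeeping precise — exactly how "at most three lengths" bounds the degree of $\det(\mathrm{Id}-P(\rho))$ low enough to contradict $\deg\rho=4$ — is the step I expect to be the main obstacle, and is presumably where one must argue carefully that each length appears with a genuinely positive power of $\rho$ (this is the analogue of $\beta=\pi-\alpha$ forcing strict subdivision in the $3$-dimensional proof) rather than merely a nonnegative one.
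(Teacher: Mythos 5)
Your overall skeleton matches the paper's: indivisibility of $\T_0$ forces the edge $e$ to be tiled by edges of the tiles $S_i$ that all carry edge-angle $\T_0$, one writes down linear relations among the (at most three) distinct lengths, and derives a contradiction with the algebraic degree $4$ of $\rho=k^{-1/4}$. However, there is a genuine gap exactly where you flag it, and it stems from an unnecessary complication you introduce at the start: you allow each sub-edge covering $e$ to have length $\rho^{a}\ell'$ with varying exponents $a\ge 1$ (``scaled copies of scaled copies''). In a single $k$-reptile tiling the tiles $S_1,\dots,S_k$ are \emph{mutually congruent} and each is similar to $S$ with ratio exactly $\rho$; no iteration is needed, and the similarity carries edges of $S$ with edge-angle $\T_0$ to edges of $S_i$ with edge-angle $\T_0$. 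Hence every sub-edge has length exactly $\rho x_j$ with $x_j$ one of the at most three lengths, i.e.\ the exponent is always $1$. The system then reads $\rho\mathbf{A}\mathbf{x}=\mathbf{x}$ with $\mathbf{A}$ a $3\times3$ nonnegative integer matrix and $\mathbf{x}\neq 0$, so $1/\rho$ is an eigenvalue of $\mathbf{A}$ and therefore a root of an integer polynomial of degree $3$ — contradicting $\deg_{\Q}(1/\rho)=4$. This is the whole proof.

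With your more general setup the argument genuinely fails, not just becomes harder: if the entries of $P(\rho)$ are polynomials in $\rho$ of degree up to $D$, then $\det(\mathrm{Id}-P(\rho))$ is an integer polynomial in $\rho$ of degree up to $3D$, and once that exceeds $3$ there is no contradiction with $\rho$ having degree $4$ (a degree-$8$ or degree-$12$ integer polynomial with constant term $1$ can perfectly well vanish at $k^{-1/4}$). The observations you reach for — positivity of coefficients, $0<\rho<1$, ``roots are units'' — do not rescue this, and you correctly identify the bookkeeping as an obstacle you cannot overcome. The fix is simply to drop the higher powers of $\rho$: the bound of ``at most three lengths'' must translate into a bound of $3$ on the \emph{degree of the characteristic polynomial} of an integer matrix, not on the number of distinct lengths feeding into a determinant of unbounded degree.
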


\begin{proof}
The proof is basically the same as for indivisible dihedral angles in
tetrahedra~\cite[Lemma 3.1]{MS}.
Let $e$ be an edge with edge-angle $\T_0$. Every point of $e$ belongs to an edge
of some of the smaller simplices $S_i$. Since $\T_0$ is indivisible, we get that
$e$ is tiled by edges of the simplices $S_i$ and each of these edges has
edge-angle $\T_0$.

Assume for contradiction that there are at most three edges with edge-angle
$\T_0$, with lengths $x_1, x_2, x_3$.
Then the edge of length $x_1$ is tiled by edges with lengths $\rho x_1$, $\rho
x_2$ and $\rho x_3$, and similarly for the edges of lengths $x_2$ and $x_3$. In
other words, there are nonnegative integers $n_{ij}$, $i,j=1,2,3$, such that

\begin{equation*}\label{thesys}
\begin{array}{rcl}
n_{11}\rho x_1 + n_{12} \rho x_2 + n_{13} \rho x_3 &=& x_1,\\
n_{21}\rho x_1 + n_{22} \rho x_2 + n_{23} \rho x_3 &=& x_2, \\
n_{21}\rho x_1 + n_{22} \rho x_2 + n_{33} \rho x_3 &=& x_3. \\
\end{array}
\end{equation*}

This can be rewritten as $\rho \mathbf A \mathbf x=\mathbf x$, where 
$\mathbf x= (x_1,x_2,x_3)^{\mathsf{T}}$ 
and $\mathbf A$ is a $3\times 3$ matrix with integer coefficients. 
Since $\mathbf x$ is nonzero, we immediately see that $1/\rho$ is an eigenvalue of $\mathbf A$.
Since the characteristic polynomial of $\mathbf A$ has degree $3$, we get a contradiction with $1/\rho$ (and hence also $\rho$) having algebraic degree $4$.
\end{proof}

Since $S$ has $10$ edges, Lemma~\ref{lemma_at-least-4-edges} implies that there are
at most two indivisible edge-angles. 

The strategy of the proof is now the following. First we exclude the case of two
indivisible edge-angles, using only elementary combinatorial arguments,
Debrunner's lemma and Lemma \ref{l_pocet_orbit}. Then we consider the case of one indivisible edge-angle. Here
we need more involved arguments: we study tilings of spherical triangles with
copies of a single spherical triangle and use various observations from
spherical geometry. We also use Fiedler's theorem (Theorem~\ref{theorem_fiedler}) to solve several cases. 


\subsection{Two indivisible edge-angles}

First, we prove an elementary observation about symmetries of the simplex $S$ and
its Coxeter diagram.

\begin{lemma}\label{lemma_7_orbits}
If $c(S)$ has a nontrivial symmetry, then the edges of $S$ have at most seven
orbits under the action of the symmetry group of $S$.
\end{lemma}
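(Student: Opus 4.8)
The plan is to reduce the statement to a single application of Lemma~\ref{l_pocet_orbit}, with Debrunner's lemma (Lemma~\ref{lemma_debrunner}) supplying the bridge between $S$ and $c(S)$. Let $M=\{v_1,\dots,v_5\}$ be the vertex set of $S$ and let $G$ be the symmetry group of $S$. The key observation is that the edges of a four-dimensional simplex are exactly the two-element subsets of $M$, and that $G$ acts on $M$ by permutations; the induced action on $\binom{M}{2}$ coincides with the action of $G$ on the edges of $S$. (This is consistent with the identification from the previous section under which each edge of $S$ corresponds to a triangle of $c(S)$, i.e.\ to a three-element subset of the facet set — the complementary viewpoint — but for this lemma the description ``edge $=$ pair of vertices'' is the most direct.) Thus the number of edge-orbits equals the number of orbits of $G$ acting on $\binom{M}{2}$.

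Next I would verify the hypotheses of Lemma~\ref{l_pocet_orbit} for the action of $G$ on $M$. The action is faithful: an isometry of $\R^4$ fixing all five affinely independent vertices of $S$ fixes the affine hull pointwise, hence is the identity. The action is nontrivial: by assumption $c(S)$ has a nontrivial symmetry, which by Debrunner's lemma corresponds to a nontrivial symmetry $\varphi$ of $S$; if $\varphi$ fixed every vertex it would fix $S$ and hence all of $\R^4$, so $\varphi$ must move some vertex of $S$. Since the symmetry group of a simplex is finite, all the finiteness assumptions of Lemma~\ref{l_pocet_orbit} are met.

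Finally, applying Lemma~\ref{l_pocet_orbit} with $|M|=5$ gives that the action of $G$ on $\binom{M}{2}$ has at most $\binom{5}{2}-5+2=7$ orbits, which is precisely the asserted bound on the number of edge-orbits of $S$. The argument is short and I do not anticipate a genuine obstacle; the only point requiring mild care is pinning down the correct $G$-set so that Lemma~\ref{l_pocet_orbit} applies verbatim (pairs of vertices, equivalently — by complementation — triangles of $c(S)$), and checking faithfulness and nontriviality of the action, both of which reduce to the elementary fact that a nonidentity symmetry of a full-dimensional simplex must move a vertex.
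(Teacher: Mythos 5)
Your proposal is correct and follows essentially the same route as the paper: identify the edges of $S$ with $\binom{M}{2}$ for $M$ the vertex set, use Debrunner's lemma to transfer the nontrivial symmetry of $c(S)$ to a nontrivial, faithful action of the symmetry group of $S$ on $M$, and apply Lemma~\ref{l_pocet_orbit} with $|M|=5$ to get the bound $\binom{5}{2}-5+2=7$. The only difference is that you spell out the faithfulness and nontriviality checks that the paper leaves implicit.
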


\begin{proof}
Let $M$ be the set of vertices of $S$.
By Debrunner's lemma, the symmetry groups of $S$ and $c(S)$ are isomorphic. In
particular, $S$ has a nontrivial symmetry group $\Phi\subseteq \Sym(M)$ acting faithfully on $M$. By Lemma~\ref{l_pocet_orbit}, this action has at most seven orbits.
\end{proof}

\begin{corollary}\label{cor_trivial_symmetry_group}
If $S$ has two distinct indivisible edge-angles, then the symmetry group of $S$
(and of $c(S)$) is trivial.
\end{corollary}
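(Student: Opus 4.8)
The plan is to argue by contradiction: suppose $S$ has two distinct indivisible edge-angles $\T_0$ and $\T_1$ and that the symmetry group of $S$ (equivalently, by Debrunner's lemma, of $c(S)$) is nontrivial. By Lemma~\ref{lemma_at-least-4-edges}, each of $\T_0$ and $\T_1$ occurs at edges of at least four different lengths, hence at at least four edges each; since $S$ has only $10$ edges, the edges carrying $\T_0$ and the edges carrying $\T_1$ account for at least $8$ of them, leaving at most $2$ edges with other edge-angles.

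The key point is that a symmetry of $S$ cannot merge edges with different lengths: by Debrunner's lemma, any symmetry of $c(S)$ comes from an isometry of $S$, and an isometry maps each edge to an edge of the same length. Therefore, within the (at least four) edges carrying $\T_0$, the symmetry group must preserve the partition into length-classes; the same holds for $\T_1$. In particular, the four-or-more edges with edge-angle $\T_0$ fall into at least four distinct orbits under the symmetry group, and likewise for $\T_1$, since edges of different lengths lie in different orbits. Counting orbits on the full edge set, we get at least $4 + 4 = 8$ orbits coming from the $\T_0$-edges and $\T_1$-edges alone. But Lemma~\ref{lemma_7_orbits} asserts that if $c(S)$ has a nontrivial symmetry, the edges of $S$ have at most seven orbits — a contradiction. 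Hence the symmetry group of $S$ (and of $c(S)$) must be trivial.

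The only step requiring any care is the bookkeeping that edges of distinct lengths necessarily lie in distinct orbits; this is immediate from the fact that the symmetry group acts by isometries (Debrunner's lemma again), so length is an orbit invariant. Everything else is the pigeonhole count $4 + 4 > 7$ against Lemma~\ref{lemma_7_orbits}. I do not foresee a genuine obstacle here: the corollary is essentially a direct combination of Lemma~\ref{lemma_at-least-4-edges} (at least four lengths per indivisible edge-angle) with Lemma~\ref{lemma_7_orbits} (at most seven edge-orbits under a nontrivial symmetry group).
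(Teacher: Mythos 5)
Your proof is correct and follows essentially the same route as the paper: combine Lemma~\ref{lemma_at-least-4-edges} (at least four lengths per indivisible edge-angle) with Lemma~\ref{lemma_7_orbits} (at most seven edge-orbits under a nontrivial symmetry) to get the contradiction $8>7$. The only point worth adding is that the eight edges split into at least eight orbits not merely because symmetries preserve lengths but also because they preserve edge-angles, so a $\T_0$-edge can never share an orbit with a $\T_1$-edge even if their lengths happened to coincide; the paper's own proof glosses over this in exactly the same way.
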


\begin{proof}
By Lemma~\ref{lemma_at-least-4-edges}, $S$ has at least four edges of different
lengths for each of the two edge-angles. In particular, no symmetry can identify
any two of these eight edges and so the symmetry group of $S$ induces at least eight
orbits. It is therefore trivial by Lemma~\ref{lemma_7_orbits}. 
\end{proof}

Now assume for contradiction that $S$ has two indivisible edge-angles $\T_1$ and
$\T_2$. Let $T_1$ and $T_2$ be the corresponding triangles in $c(S)$. By
Lemma~\ref{lemma_at-least-4-edges}, each of $T_1, T_2$ occurs at least four times in
$c(S)$. 

We say that two edges of $c(S)$ are of the same {\em edge-type\/} if they have
equal labels; that is, they represent equal dihedral angles. An edge of type
$\alpha$ is also called an {\em $\alpha$-edge}. A triangle $T$ of $c(S)$ with
edges of types $\alpha, \beta, \gamma$ is called an {\em $(\alpha \beta
\gamma)$-triangle} and we write $T=(\alpha\beta\gamma)$.

\begin{observation}\label{obs_edge_type_twice}
Every edge of $c(S)$ belongs to a copy of the triangle $T_1$ or $T_2$. Moreover, every
edge-type of $T_1$ and $T_2$ 
occurs at least twice in $c(S)$. 
\end{observation}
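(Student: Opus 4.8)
The plan is to reduce the statement to elementary counting inside the complete graph $c(S)=K_5$, using only Lemma~\ref{lemma_at-least-4-edges}.

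\textbf{Setting up the dictionary.} Since $S$ is a $4$-simplex, any two of its five facets meet, so $c(S)$ is $K_5$: it has five vertices (the facets), ten edges (one per $2$-face of $S$, labelled by the corresponding dihedral angle) and ten triangles. By the Observation stating that edge-angles of a four-dimensional simplex are represented by triangles of its Coxeter diagram, these ten triangles correspond bijectively to the ten edges of $S$, a triangle with edge-labels $\alpha,\beta,\gamma$ matching an edge whose edge-angle is the spherical triangle with angles $\alpha,\beta,\gamma$. Because a spherical triangle is rigid (determined up to congruence by its angles), a triangle of $c(S)$ is a copy of $T_i$ precisely when the associated edge-angle is congruent to $\T_i$; such an edge-angle is then itself indivisible. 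I also use the combinatorial fact that every edge of $K_5$ lies in exactly three distinct triangles, and the fact that, by Lemma~\ref{lemma_at-least-4-edges}, there are at least four edges of $S$ with edge-angle $\T_1$ and at least four with edge-angle $\T_2$, so via the bijection $c(S)$ contains at least four copies of $T_1$ and at least four copies of $T_2$.

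\textbf{Proving the first assertion.} Since $\T_1$ and $\T_2$ are distinct indivisible edge-angles, they are non-congruent, so $T_1$ and $T_2$ carry different multisets of labels; hence the set of copies of $T_1$ and the set of copies of $T_2$ are disjoint, and together they account for at least $4+4=8$ of the ten triangles of $c(S)$. Consequently at most two triangles of $c(S)$ are copies of neither $T_1$ nor $T_2$. As every edge of $c(S)$ lies in three distinct triangles and $3>2$, at least one of these three triangles is a copy of $T_1$ or $T_2$, which is the first assertion.

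\textbf{Proving the ``moreover''.} Fix a label $\ell$ occurring on $T_1$. Every copy of $T_1$ uses at least one edge of label $\ell$. Were there only one edge of label $\ell$ in $c(S)$, then all of the (at least four) copies of $T_1$ would have to contain that single edge, contradicting the fact that an edge lies in only three triangles; hence $\ell$ labels at least two edges of $c(S)$. The same argument applied to the labels of $T_2$ completes the proof.

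\textbf{Expected difficulty.} Nothing here is deep: once the correspondence between edges of $S$, $2$-faces of $S$, and triangles and edges of $c(S)=K_5$ is set up, everything is counting. The only points requiring care are that ``copy of $T_i$'' genuinely means ``edge-angle congruent to, hence as indivisible as, $\T_i$'', which relies on the rigidity of spherical triangles, and that a triangle of $c(S)$ cannot be simultaneously a copy of $T_1$ and of $T_2$, which relies on $\T_1\not\cong\T_2$.
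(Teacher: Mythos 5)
Your proof is correct and follows essentially the same counting argument as the paper's: at least eight of the ten triangles of $c(S)=K_5$ are copies of $T_1$ or $T_2$ while each edge lies in only three triangles, and a label occurring on a single edge would force all four or more copies of some $T_i$ through that one edge. You additionally spell out the bijection between edges of $S$ and triangles of $c(S)$ and the disjointness of the two families of copies, points the paper leaves implicit, but the substance is identical.
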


\begin{proof}
The first part follows from the fact that every edge of $c(S)$ 
is contained in three triangles and at least eight of the ten
triangles of $c(S)$ are copies of $T_1$ or $T_2$.
The second claim follows again from
the fact that every edge of $c(S)$ is common to only three triangles.
\end{proof}

\begin{observation}\label{obs_multitype_4_times}
An edge-type common to both triangles $T_1, T_2$ occurs at least four times in
$c(S)$. Similarly, an edge-type occurring twice in $T_1$ (or $T_2$) occurs at
least four times in $c(S)$. 
\end{observation}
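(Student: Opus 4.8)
The plan is to argue entirely inside the Coxeter diagram $c(S)$, which for a four-dimensional simplex is the complete graph $K_5$: its ten edges are the ridges of $S$, and its ten triangles (three-cliques) correspond to the ten edges of $S$, a triangle being a copy of $T_i$ exactly when the associated edge of $S$ has edge-angle $\T_i$. By Lemma~\ref{lemma_at-least-4-edges} each of $T_1,T_2$ has at least four copies among the triangles of $c(S)$; write $m_1,m_2$ for these numbers, so $m_1,m_2\ge 4$. Fix the edge-type $\alpha$ in question and let $t$ be the number of $\alpha$-edges of $c(S)$; the goal is to show $t\ge 4$. I will use two trivial facts about $K_5$ throughout: every edge lies in exactly three triangles, and any two distinct edges lie in at most one common triangle --- the unique triangle spanned by their endpoints if they meet, and none if they are disjoint; in particular $K_5$ contains no matching with three edges.

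First I would handle the second assertion, namely the case where $\alpha$ occurs at least twice among the labels of $T_i$ for some $i\in\{1,2\}$ (this also covers the first assertion when the common edge-type is a repeated label of one of $T_1,T_2$). Each copy of $T_i$ is then a triangle containing at least two $\alpha$-edges, so choosing in every such triangle an arbitrary pair of its $\alpha$-edges gives an injection from the set of copies of $T_i$ into the set of pairs of $\alpha$-edges lying in a common triangle, since such a pair determines its triangle. Hence $4\le m_i\le\binom{t}{2}$, and as $\binom{3}{2}=3<4$ this forces $t\ge 4$.

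For the first assertion it then remains to treat the case where $\alpha$ occurs exactly once in each of $T_1,T_2$. Now every copy of $T_1$ or $T_2$ is a triangle with exactly one $\alpha$-edge, and because $\T_1\ne\T_2$ (and a spherical triangle is determined up to congruence by its three angles) the label multisets of $T_1$ and $T_2$ are distinct, so no triangle of $c(S)$ is a copy of both and the $m_1+m_2\ge 8$ copies are distinct triangles. Double counting the incidences $(e,\Delta)$ with $e$ an $\alpha$-edge contained in a triangle $\Delta$ gives exactly $3t$ on one side and at least $m_1+m_2\ge 8$ on the other, so $t\ge 3$. If $t=3$ the three $\alpha$-edges are not a matching, so some triangle $\Delta_0$ contains two of them; having two $\alpha$-edges, $\Delta_0$ is a copy of neither $T_1$ nor $T_2$, and it contributes at least two further incidences, whence $3t\ge(m_1+m_2)+2\ge 10>9=3t$, a contradiction. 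Therefore $t\ge 4$ in this case as well, completing the proof.

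The entire argument is elementary double counting in $K_5$, so there is no serious obstacle; the one step that takes a moment's care is excluding $t=3$ in the last case, where the exact incidence count has to be combined with the non-existence of a three-edge matching in $K_5$. As an alternative, one could finish that case by going through the four isomorphism types of a three-edge subgraph of $K_5$ --- a triangle, a star $K_{1,3}$, a path $P_4$, and a path $P_3$ together with a disjoint edge --- and checking that each of them leaves more than two of the ten triangles of $c(S)$ carrying either no $\alpha$-edge or two $\alpha$-edges, i.e.\ failing to be a copy of $T_1$ or $T_2$, which is impossible.
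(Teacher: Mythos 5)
Your proof is correct and follows essentially the same route as the paper's: the repeated-label case is handled by counting pairs of $\alpha$-edges determined by the (at least four) copies of $T_i$, and the common-label case by counting incidences between $\alpha$-edges and the at least eight triangles, ruling out three $\alpha$-edges because two of them would have to share a triangle. The only difference is presentational — you make the incidence count explicit ($3t\ge 10$ versus $3t=9$) where the paper phrases it as "at most seven triangles with exactly one $\alpha$-edge" — but the underlying argument is identical.
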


\begin{proof}
Let $\alpha$ be an edge-type common to both $T_1$ and $T_2$ and suppose that
each of $T_1$, $T_2$ has just one $\alpha$-edge. There are at least eight
triangles with an $\alpha$-edge in $c(S)$, therefore $c(S)$ has at least three
$\alpha$-edges. But if there are just three $\alpha$-edges, then some two of
them share a vertex (and hence a triangle). Therefore there are at most seven
triangles in $c(S)$ with exactly one $\alpha$-edge.

If $T_1$ has at least two $\alpha$-edges, then there are at least four pairs of
$\alpha$-edges in $c(S)$, hence at least four $\alpha$-edges.
\end{proof}

\begin{observation}\label{obs_4_times_6_times}
An edge-type $\alpha$ occurring four times together in $T_1$ and $T_2$ occurs at least six times in
$c(S)$.  
\end{observation}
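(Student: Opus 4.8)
The plan is to argue by contradiction: suppose the edge-type $\alpha$ occurs four times in $T_1$ and $T_2$ combined (so, by Observation~\ref{obs_multitype_4_times}, either $\alpha$ appears in both $T_1$ and $T_2$ with one $\alpha$-edge each plus possibly more, or $\alpha$ appears twice in one of them), yet $c(S)$ contains at most five $\alpha$-edges. The key structural fact to exploit is the one already used repeatedly: every edge of $c(S)$ lies in exactly three triangles, $c(S)$ has ten triangles total, and at least eight of those ten are copies of $T_1$ or $T_2$. First I would count, with multiplicity, the incidences between $\alpha$-edges and the copies of $T_1$ and $T_2$: since $T_1$ and $T_2$ together contain four $\alpha$-edges and each occurs at least four times in $c(S)$ (Lemma~\ref{lemma_at-least-4-edges}), we get at least $16$ (triangle-copy, $\alpha$-edge-slot) incidences coming from copies of $T_1,T_2$ alone. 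On the other hand, each $\alpha$-edge of $c(S)$ lies in at most three triangles, hence contributes at most three such incidences, giving at most $3\cdot 5 = 15$ if there were only five $\alpha$-edges — a contradiction. So there must be at least six $\alpha$-edges.

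The one subtlety I would need to pin down carefully is the lower bound of $16$ incidences: it relies on $T_1$ and $T_2$ genuinely being distinct triangle-shapes (guaranteed since $\T_1\neq\T_2$ are distinct indivisible edge-angles), each appearing $\ge 4$ times, and on the four $\alpha$-slots being distributed among them in a way that does not cause overcounting. If $\alpha$ appears once in $T_1$ and once in $T_2$ but these account for only two of the four occurrences, I should check where the other two $\alpha$-edges of the "combined count" sit — but the phrasing "occurring four times together in $T_1$ and $T_2$" means that counting edge-slots across one copy of $T_1$ and one copy of $T_2$ yields four $\alpha$-edges. Then $4$ copies of $T_1$ contribute (at least) $4\cdot(\text{number of }\alpha\text{-slots in }T_1)$ incidences and likewise for $T_2$, summing to at least $4\cdot 4 = 16$; since distinct copies of $T_1$ (resp.\ $T_2$) are distinct triangles of $c(S)$, no incidence is double-counted within each family, and a triangle cannot be simultaneously a copy of $T_1$ and of $T_2$, so the two families are disjoint as well.

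I expect the main (though modest) obstacle to be bookkeeping the degenerate sub-case where $\alpha$ occurs \emph{twice} in, say, $T_1$ and $\alpha$ does not appear in $T_2$ at all, versus the case where $\alpha$ appears once in each — the incidence count $16$ is the same either way, but one must make sure the "at least four copies" input from Lemma~\ref{lemma_at-least-4-edges} is applied to the \emph{right} triangle(s). Once that is settled, the pigeonhole step ($16 > 15$) closes the argument immediately and gives the claimed bound of six $\alpha$-edges in $c(S)$.
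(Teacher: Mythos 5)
Your proof is correct and is essentially the paper's own argument: a double count of incidences between $\alpha$-edges and the copies of $T_1$ and $T_2$, giving at least $4\cdot 4=16$ incidences from the triangle side against at most $3$ per edge on the edge side, which forces at least six $\alpha$-edges. The paper states this directly ($3N\ge 16$ implies $N\ge 6$) rather than via contradiction with $N\le 5$, but the content and the key observation (each edge lies in exactly three triangles) are identical.
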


\begin{proof}
Since each of the triangles $T_1$, $T_2$ has at least four copies in $c(S)$, the number of incidences of $\alpha$-edges with copies of triangles $T_1$ and $T_2$ in $c(S)$ is at least $16$. Since every edge forms at most three incidences, the observation follows.
\end{proof}

\begin{observation}\label{obs_common_edge_type}
The triangles $T_1$ and $T_2$ have at least one common edge-type.
\end{observation}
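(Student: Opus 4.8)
The plan is to argue by contradiction and reduce the statement to an elementary count of triangles in a two-edge-coloring of $K_5$.

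First I would recall that $c(S)$ is the complete graph on the five facets of $S$, so it has $\binom{5}{2}=10$ edges and $\binom{5}{3}=10$ triangles, the latter in bijection with the edge-angles of $S$. Assuming for contradiction that $T_1$ and $T_2$ have no common edge-type, I would let $E_1$ (resp.\ $E_2$) be the set of edges of $c(S)$ whose label is an edge-type of $T_1$ (resp.\ $T_2$). The disjointness of the edge-types of $T_1$ and $T_2$ gives $E_1\cap E_2=\emptyset$, while Observation~\ref{obs_edge_type_twice} guarantees that every edge of $c(S)$ lies in a copy of $T_1$ or of $T_2$ and hence carries a label among the edge-types of $T_1$ or $T_2$; thus $E_1\cup E_2$ is all of $E(c(S))$. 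Consequently $E_1$ and $E_2$ partition the ten edges of $c(S)$, and I would set $G_1:=(V,E_1)$ and $G_2:=(V,E_2)$, the two complementary spanning subgraphs.

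Next I would observe that every copy of $T_1$ is a triangle all of whose edges carry $T_1$-edge-types, hence a triangle of $G_1$, and likewise every copy of $T_2$ is a triangle of $G_2$. By Lemma~\ref{lemma_at-least-4-edges} there are at least four copies of each, so $G_1$ and $G_2$ each contain at least four triangles. The contradiction then comes from two elementary extremal facts on five vertices: a graph with at least four triangles must have at least six edges (a direct check shows that five edges give at most two triangles), whereas a graph with at most four edges has at most one triangle (two distinct triangles share at most one edge and so already require five edges). Since $|E_1|+|E_2|=10$, one of $G_1,G_2$ has at most four edges and therefore at most one triangle, contradicting that both contain at least four.

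The hard part is essentially only this extremal count on $K_5$, and it is genuinely easy; the real care is needed in setting up the partition $E_1\sqcup E_2=E(c(S))$, which relies simultaneously on the no-common-edge-type assumption (for disjointness) and on Observation~\ref{obs_edge_type_twice} (for covering all edges), so that the two families of triangles live in complementary subgraphs whose edge numbers must add up to ten.
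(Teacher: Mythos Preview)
Your approach is essentially the paper's: both rest on the extremal fact that four distinct triangles in $K_5$ span at least six edges, so under the no-common-edge-type assumption the (disjoint) edge-sets supporting the copies of $T_1$ and of $T_2$ would together need at least $6+6=12>10$ edges. The paper states this in one line and does not bother partitioning all of $E(c(S))$; your detour through Observation~\ref{obs_edge_type_twice} to get $E_1\sqcup E_2=E(c(S))$ is harmless but unnecessary---disjointness of $E_1$ and $E_2$ alone already forces $|E_1|+|E_2|\le 10$.

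There is one slip in your last sentence: from $|E_1|+|E_2|=10$ you can only conclude that some $G_i$ has at most \emph{five} edges, not four (the split $5+5$ is possible a priori). This does not damage the argument, since you already recorded the needed bound (``five edges give at most two triangles''); applying that, or simply observing that $|E_1|\ge 6$ and $|E_2|\ge 6$ cannot both hold, gives the contradiction.
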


\begin{proof}
The observation follows from the fact that the the union of four different triangles in $c(S)$ has always at least six edges.
\end{proof}

By Observations~\ref{obs_edge_type_twice},~\ref{obs_multitype_4_times}~and~\ref{obs_common_edge_type}, the
edges of $c(S)$ have at most four types in total, since the common edge-type of
$T_1$ and $T_2$ occurs four times and every other edge-type occurs at least
twice. From these observations it also follows that if there are four different
edge-types, then three of them, $\beta,\gamma,\delta$, appear just once in $T_1$
or $T_2$ and the remaining one, $\alpha$, is common to $T_1$ and $T_2$ and appears twice in $T_1$ or twice in $T_2$. Similarly if
there are three different edge-types, then one of them appears at least three
times together in $T_1$ and $T_2$.

If there are just two different edge-types in $c(S)$, then $c(S)$ has a
non-trivial symmetry, which follows from the fact that every graph on five vertices has a nontrivial automorphism. But this contradicts Corollary~\ref{cor_trivial_symmetry_group}. 

Thus there are three or four different edge-types in $c(S)$ and we have 
five essentially different cases for the types of $T_1$ and $T_2$; see Table~\ref{table_two_indivisible}. Here by $\alpha,\beta,\gamma,\delta$ we denote pairwise different angles. By Observations~\ref{obs_edge_type_twice},~\ref {obs_multitype_4_times}~and~\ref{obs_4_times_6_times}, we can exactly determine the numbers of edges of each edge-type in $c(S)$. These are also shown in Table~\ref{table_two_indivisible}.

\begin{table}
\begin{center}
 \begin{tabular}{c|c|c|c|c|c|c}
 Case & type of $T_1$ & type of $T_2$ &  $\alpha$-edges & $\beta$-edges & $\gamma$-edges & $\delta$-edges \\
 \hline
 (1) & $(\alpha\alpha\beta)$  & $(\alpha\gamma\delta)$ & 4 & 2 & 2 & 2 \\
 (2) & $(\alpha\alpha\alpha)$ & $(\alpha\beta\gamma)$  & $6$ & $2$ & $2$ & 0 \\
 (3) & $(\alpha\alpha\beta)$  & $(\alpha\alpha\gamma)$ & $6$ & $2$ & $2$ & 0 \\
 (4) & $(\alpha\alpha\beta)$  & $(\alpha\gamma\gamma)$ & 4 & 2 & 4 & 0 \\
 (5) & $(\alpha\alpha\beta)$  & $(\alpha\beta\gamma)$  & 4 & 4 & 2 & 0 \\
\end{tabular}
\caption{Types of triangles $T_1$ and $T_2$ and the numbers of edges of each edge-type in $c(S)$.}
\label{table_two_indivisible}
\end{center}
\end{table}

In case $(1)$, since there are just two $\beta$-edges,
some two $(\alpha\alpha\beta)$-triangles in $c(S)$ share a $\beta$-edge. This
means that the $\alpha$-edges form a four-cycle. Further it follows that both
diagonals of the four-cycle are $\beta$-edges and that the fifth vertex of
$c(S)$ is joined by $\gamma$-edges to two opposite vertices of the four-cycle
and by $\delta$-edges to the other pair of opposite vertices; see
Figure~\ref{fig_no-2-indivisible}(a). 
This diagram has a $\mathbb{Z}_2\times\mathbb{Z}_2$ symmetry, which contradicts
Corollary~\ref{cor_trivial_symmetry_group}.

Now we consider case (2). Since
$K_4$ is the only graph with six edges and four triangles, the $\alpha$-edges
form a $K_4$ subgraph in $c(S)$, with two vertices joined by a $\beta$-edge and
two by a $\gamma$-edge to the remaining vertex of $c(S)$; see
Figure~\ref{fig_no-2-indivisible}(b).
Again, this diagram has a $\mathbb{Z}_2\times\mathbb{Z}_2$ symmetry, in contradiction
with Corollary~\ref{cor_trivial_symmetry_group}.

\begin{figure}
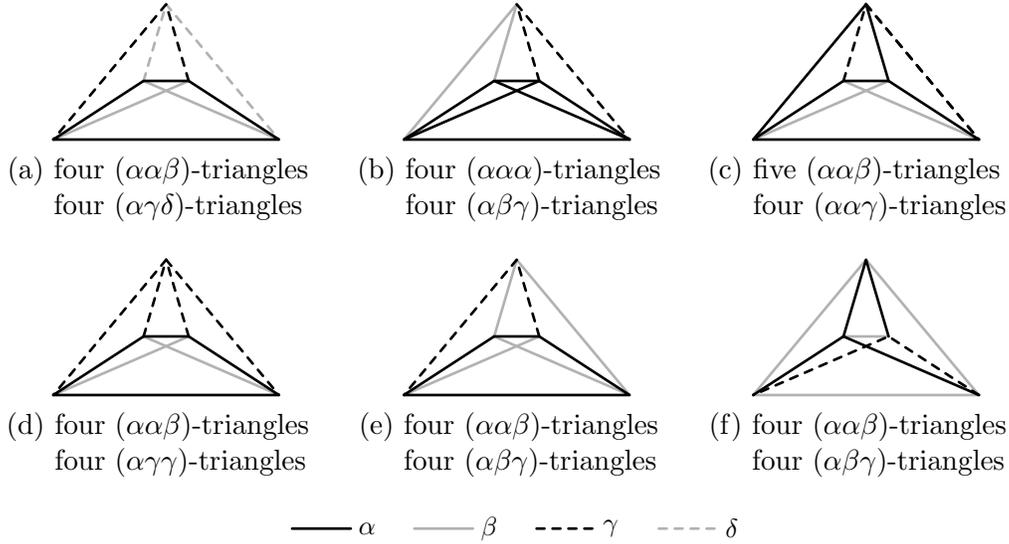

 \begin{center}
  \begin{tabular}{ccc}
\includegraphics{obrazky/coxeter.24} & \includegraphics{obrazky/coxeter.25} & \includegraphics{obrazky/coxeter.26}\\
(a) \parbox[t]{3.6cm}{four $(\alpha\alpha\beta)$-triangles\\ four $(\alpha\gamma\delta)$-triangles} & (b) \parbox[t]{3.6cm}{four
$(\alpha\alpha\alpha)$-triangles\\ four $(\alpha\beta\gamma)$-triangles} & (c)
\parbox[t]{3.6cm}{five $(\alpha\alpha\beta)$-triangles\\ four
$(\alpha\alpha\gamma)$-triangles}\\
& & \\
\includegraphics{obrazky/coxeter.27} & \includegraphics{obrazky/coxeter.28} & \includegraphics{obrazky/coxeter.281}\\
 (d) \parbox[t]{3.6cm}{four $(\alpha\alpha\beta)$-triangles\\ four
$(\alpha\gamma\gamma)$-triangles} & (e) \parbox[t]{3.6cm}{four
$(\alpha\alpha\beta)$-triangles\\ four $(\alpha\beta\gamma)$-triangles} & (f) \parbox[t]{3.6cm}{four
$(\alpha\alpha\beta)$-triangles\\ four $(\alpha\beta\gamma)$-triangles}\\
\end{tabular}
\begin{tabular}{c}
 \\
     \includegraphics{obrazky/popisky.201}  \\
\end{tabular}
  \caption{Coxeter diagrams for the case of two indivisible edge-angles.}
\label{fig_no-2-indivisible}
 \end{center}
\end{figure}

In case (3), let $H_{\alpha}$ be the subgraph of $c(S)$ formed by the $\alpha$-edges. Like in case (1), $H_{\alpha}$ contains a four-cycle. From the five possible extensions of the four-cycle by two edges, only $K_{2,3}$ has at least eight induced paths of length $2$. Thus $H_{\alpha}$ is isomorphic to $K_{2,3}$. The remaining edges form a disjoint
union of an edge and a triangle, so without loss of generality the two
$\gamma$-edges are contained in the triangle; see
Figure~\ref{fig_no-2-indivisible}(c).
This diagram has again a $\mathbb{Z}_2\times\mathbb{Z}_2$ symmetry, in contradiction
with Corollary~\ref{cor_trivial_symmetry_group}.

In case (4), just like in case (1), the $\alpha$-edges form a four-cycle whose diagonals are the two $\beta$-edges. The remaining edges are then $\gamma$-edges; see Figure~\ref{fig_no-2-indivisible}(d).
This diagram has a $D_4$ symmetry, in contradiction with
Corollary~\ref{cor_trivial_symmetry_group}.

Now we consider case (5). Out of the six subgraphs of $K_5$ with four edges, only the following three have four induced paths of length $2$: the star $K_{1,4}$, the four-cycle, and the \emph{fork}, which is the tree with the degree sequence $(3,2,1,1,1)$. If the $\alpha$-edges form $K_{1,4}$, there can be no $(\alpha\beta\gamma)$-triangles in $c(S)$. Thus, there are just two possibilities for the subgraph $H_{\alpha}$ of $c(S)$ formed by the $\alpha$-edges.

Suppose that $H_{\alpha}$ is a four-cycle. The diagonals of the four-cycle are then $\beta$-edges. In order to create four $(\alpha\beta\gamma)$-triangles, the vertices of the
four-cycle must be joined to the remaining vertex by two $\beta$-edges and two
$\gamma$-edges, in an alternating way; see Figure~\ref{fig_no-2-indivisible}(e). 
This diagram has a $\mathbb{Z}_2\times\mathbb{Z}_2$ symmetry, in contradiction
with Corollary~\ref{cor_trivial_symmetry_group}.

If $H_{\alpha}$ is a fork, the $\beta$-edges are uniquely determined, since the $\alpha$-edges form exactly four induced paths of length $2$.
The remaining two edges are $\gamma$-edges; see Figure~\ref{fig_no-2-indivisible}(f).
This diagram has a $\mathbb{Z}_2$-symmetry, in contradiction with
Corollary~\ref{cor_trivial_symmetry_group}.

We have finished the proof of the following statement.

\begin{proposition}\label{tvrzeni_no_2_edge_angles}
For $k \neq m^2$, every $k$-reptile four-dimensional simplex contains exactly one indivisible edge-angle.
\qed
\end{proposition}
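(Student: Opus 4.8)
The plan is to first bound the number of indivisible edge-angles and then rule out the possibility of two of them by a combinatorial analysis of the Coxeter diagram. Since $S$ has exactly $10$ edges and, by Lemma~\ref{lemma_at-least-4-edges}, each indivisible edge-angle is incident to at least four edges of pairwise distinct lengths, there can be at most two indivisible edge-angles; and there is at least one, namely the edge-angle of smallest spherical area. So it suffices to derive a contradiction from the assumption that $S$ has two distinct indivisible edge-angles $\T_1,\T_2$, represented by triangles $T_1,T_2$ in $c(S)$, each occurring at least four times among the $10$ triangles of $c(S)$.

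The first reduction is to invoke Corollary~\ref{cor_trivial_symmetry_group}: in this situation the symmetry group of $c(S)$ must be trivial. The strategy is therefore to show that the assumption ``$T_1,T_2$ each occur at least four times'' already forces enough structure on $c(S)$ to guarantee a nontrivial graph automorphism preserving edge labels. To this end I would collect elementary counting facts. Every edge of $c(S)$ lies in three of the ten triangles, and at least eight of those ten triangles are copies of $T_1$ or $T_2$, so every edge lies in such a copy; hence the edge-types of $c(S)$ are exactly those appearing in $T_1$ and $T_2$. Double-counting incidences between edges of a given type and copies of $T_1,T_2$ then pins down, for each type, a lower bound on its multiplicity: an edge-type common to both triangles, or appearing twice in one of them, occurs at least four times, and one appearing four times in total between $T_1$ and $T_2$ occurs at least six times. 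Since the union of four distinct triangles in $K_5$ always spans at least six edges, $T_1$ and $T_2$ must share at least one edge-type, and the multiplicity bounds force the total number of edge-types to be three or four (the case of two edge-types being immediate, as every graph on five vertices has a nontrivial automorphism). Enumerating the remaining possibilities up to relabeling yields a short list of cases for the unordered pair of types of $(T_1,T_2)$, together with the exact edge-type multiplicities.

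In each case I would reconstruct $c(S)$ essentially uniquely. The key local tool is that the subgraph $H_\alpha$ spanned by the most frequent edge-type $\alpha$ must realize the prescribed number of triangles through $\alpha$-edges, equivalently the prescribed number of induced paths of length two (each such path closes to a triangle whose third edge-type is then forced by the type of $T_1$ or $T_2$). Among the few subgraphs of $K_5$ with the relevant number of edges, only specific ones --- a $4$-cycle, $K_{2,3}$, $K_4$, or the tree with degree sequence $(3,2,1,1,1)$ --- meet these constraints, and once $H_\alpha$ is fixed the placement of the remaining one or two edges is determined by the triangle types. In every case the resulting diagram turns out to carry a nontrivial symmetry --- a single reflection, a $\mathbb{Z}_2\times\mathbb{Z}_2$, or a $D_4$ --- contradicting the triviality of the symmetry group of $c(S)$. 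This contradiction rules out two indivisible edge-angles and leaves exactly one.

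The main obstacle will be the case analysis in the last step: verifying that the enumeration of type-pairs is exhaustive, and that in each case the counting data really does determine the diagram uniquely up to symmetry, rather than leaving an asymmetric alternative. The arguments identifying $H_\alpha$ from its triangle count, and the bookkeeping of which extensions of a $4$-cycle (or other small subgraph) by a couple of edges produce exactly the required number of length-two paths, are the technically fiddly parts; everything else is a routine application of Debrunner's lemma and double counting.
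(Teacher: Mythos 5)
Your proposal follows essentially the same route as the paper: the bound of at most two indivisible edge-angles from Lemma~\ref{lemma_at-least-4-edges} and the ten edges of $S$, the reduction via Corollary~\ref{cor_trivial_symmetry_group} to showing that two indivisible edge-angles force a nontrivial symmetry of $c(S)$, the same counting observations on edge-type multiplicities and the shared edge-type, and the same case analysis identifying $H_\alpha$ (four-cycle, $K_{2,3}$, $K_4$, or fork) and exhibiting a $\mathbb{Z}_2$, $\mathbb{Z}_2\times\mathbb{Z}_2$, or $D_4$ symmetry in each resulting diagram. The outline is correct, though of course the five-case enumeration you defer is exactly where the paper spends its effort.
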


\subsection{Basic facts and observations from spherical geometry} 

All spherical triangles are regarded as subsets of the $2$-dimensional unit sphere.
In this subsection we assume that 
$\T$ is a spherical triangle with angles
$\alpha\le\beta\le\gamma<\pi$ and corresponding opposite edges $a,b,c$.
The lengths of the edges are measured in radians and again denoted by $a,b,c$,
respectively. 

The following lemma lists a few standard facts about spherical triangles (see,
for example,~\cite{zwillinger}). The proof of part (a) can be found
in~\cite[Chapter 41]{pak}.
\bigskip

\begin{lemma}\label{lemma_fact} 
For a spherical triangle $\T$, we have
\begin{enumerate}
\item[{\rm(a)}] $\alpha+\beta+\gamma>\pi$, and $\alpha+\beta+\gamma - \pi$ is
equal to the spherical area $\Delta(\T)$ of $\T$. 

\item[{\rm(b)}] $\beta+\gamma<\pi + \alpha$; equivalently, $\Delta(\T)<2\alpha$ (spherical triangle inequality).

\item[{\rm(c)}] 
$\cos \gamma=-\cos \alpha \cos \beta + \sin \alpha \sin \beta \cos c$ (spherical law of cosines for angles).

\item[{\rm(d)}] If $\alpha < \beta < \gamma$, then $a < b < c$. If $\alpha = \beta < \gamma$, then $a = b < c$. If $\alpha < \beta = \gamma$, then $a < b = c$. 

\item[{\rm(e)}] $a < b + c$, $b < a + c$ and $c < a + b$ (triangle inequality
for the spherical distance).

\item[{\rm(f)}] $a,b,c < \pi$.

\end{enumerate}
\end{lemma}

The quantity $\alpha+\beta+\gamma - \pi$ is also called the {\em spherical
excess\/} of $\T$.

A {\em spherical lune\/} $\mathcal{L}$ with angle $\varphi < \pi$, which we shortly call the {\em $\varphi$-lune}, is a slice of
the sphere bounded by two great half-circles whose supporting planes have
dihedral angle $\varphi$. In other words, $\mathcal{L}$ is a spherical $2$-gon whose vertices are two antipodal points and both inner angles are equal to
$\varphi$. The spherical area of $\mathcal{L}$ is $2\varphi$. Note that $\mathcal{L}$ contains every spherical triangle with angle $\varphi$; this implies the spherical triangle inequality (Lemma~\ref{lemma_fact}(b)).

Consider a tiling of a lune $\mathcal{L}$ by spherical triangles. A tile $\T$ is called a {\em corner tile\/} if $\T$ shares a vertex $v$ with $\mathcal{L}$.
A tile $\T$ is called {\em corner-filling\/} if the complement of $\T$ in $\mathcal{L}$ is a spherical triangle. In particular, $\T$ shares a vertex $v$ with $\mathcal{L}$ and the two other vertices of $\T$ are internal points of the edges of $\mathcal{L}$. See Figure~\ref{obr_corner_filling}.

\begin{observation}\label{obs_corner}
Let $\varphi$ be the minimum angle of a spherical triangle $\T$ and let $f$ be the edge opposite to $\varphi$. Then in every tiling of the $\varphi$-lune by the copies of $\T$ there are two corner-filling tiles. Moreover, each of the corner-filling tiles neighbors with exactly one other tile, sharing the edge $f$.
\end{observation}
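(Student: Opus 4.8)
The plan is to analyze a fixed tiling of the $\varphi$-lune $\mathcal{L}$ by copies of $\T$, focusing on the two antipodal vertices of $\mathcal{L}$, where the interior angle equals $\varphi$. First I would argue that at each of the two vertices of $\mathcal{L}$, exactly one tile can be incident: since $\varphi$ is the \emph{minimum} angle of $\T$, any tile touching a vertex $v$ of $\mathcal{L}$ contributes an angle of at least $\varphi$ there, and the angles of the tiles at $v$ must sum to exactly $\varphi$ (the interior angle of the lune); hence precisely one tile is incident to $v$, and it contributes its minimum angle $\varphi$ at $v$, with its edge $f$ opposite $v$ lying in the interior of $\mathcal{L}$. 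Call this tile $\T_v$.

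Next I would show $\T_v$ is corner-filling, i.e.\ $\mathcal{L}\setminus\T_v$ is a spherical triangle. The two edges of $\T_v$ emanating from $v$ must run along the two boundary great half-circles of $\mathcal{L}$ (there is nothing else for them to lie along near $v$, since $\mathcal{L}$ has angle exactly $\varphi$ at $v$ and $\T_v$ fills that whole angle). So the two endpoints of $f$ other than $v$, say $p$ and $q$, lie on the two edges of $\mathcal{L}$. The complement $\mathcal{L}\setminus\T_v$ is then bounded by: the segment of one edge of $\mathcal{L}$ from $p$ onward, the segment of the other edge from $q$ onward, the edge $f$ of $\T_v$, and it closes up at the opposite vertex $v'$ of $\mathcal{L}$ (or wherever the two boundary arcs next meet). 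To see that this region is a triangle and not something more complicated, I would invoke the already-established fact that $v'$ also has a single incident tile $\T_{v'}$, whose two edges from $v'$ likewise lie along the two edges of $\mathcal{L}$; since $p$ and $q$ cannot be strictly between $v$ and the far endpoints of these edges of $\T_{v'}$ on \emph{both} ends simultaneously without the edge lengths exceeding $\pi$ — here I would use Lemma~\ref{lemma_fact}(f) ($a,b,c<\pi$) together with the fact that the full edge of $\mathcal{L}$ has length $\pi$ — the region $\mathcal{L}\setminus\T_v$ has exactly the three vertices $p$, $q$, $v'$. Hence $\T_v$ is corner-filling, giving the two corner-filling tiles $\T_v,\T_{v'}$.

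Finally, the ``moreover'' part: $\T_v$ meets the rest of the tiling only along $f$, since its other two edges lie on the boundary of $\mathcal{L}$. Any tile sharing part of $f$ with $\T_v$ lies in the triangular region $\mathcal{L}\setminus\T_v$, which has $f$ as one of its three edges. I would argue that $f$ must be an edge of a single neighboring tile: a tile of the tiling meeting $f$ has a vertex or an edge along $f$; if two distinct tiles met $f$, there would be a tiling vertex $w$ in the relative interior of $f$ where tiles of $\mathcal{L}\setminus\T_v$ accumulate, and since $f$ is a straight (geodesic) edge, the tile angles on the far side at $w$ sum to $\pi$; combined with the length constraints and the fact that the triangle $\mathcal{L}\setminus\T_v$ shares the single edge $f$ with $\T_v$, I would derive that in fact $f$ coincides with a full edge of exactly one tile. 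The cleanest way is: the tile $\T'$ on the other side of $f$ adjacent to the endpoint $p$ has an angle at $p$; but $p$ is already a vertex of $\T_v$ with $\T_v$'s edge along the boundary of $\mathcal{L}$, so the angles of tiles in $\mathcal{L}\setminus\T_v$ at $p$ sum to (interior angle of the triangular region at $p$), which is less than $\pi$, forcing — by minimality of $\varphi$ and the congruence of all tiles — the edge structure along $f$ to be a single tile $\T'$ sharing the whole edge $f$.

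The main obstacle I expect is the last step: ruling out that the edge $f$ of the corner-filling tile is subdivided by several smaller tiles on its far side. The global combinatorics of the tiling is not controlled a priori, so I would need to lean carefully on the rigidity coming from all tiles being congruent to $\T$ (so all their edges have lengths in $\{a,b,c\}$) together with the metric constraints $a,b,c<\pi$ and the fact that the boundary edges of $\mathcal{L}$ have length exactly $\pi$; this pins down $p,q$ and forces the one-tile-across-$f$ conclusion.
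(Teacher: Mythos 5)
Your treatment of the first two claims is sound and essentially the paper's argument: the lune has angle exactly $\varphi$ at each of its two vertices, and since every tile contributes an angle at least $\varphi$ there, each vertex of the lune carries exactly one tile, which places its $\varphi$-angle at that vertex and its two longer edges along the boundary arcs of the lune; by Lemma~\ref{lemma_fact}(f) those edges have length less than $\pi$, so the tile contains only one vertex of the lune, its remaining two vertices are interior points of the boundary arcs, and the complement is a spherical triangle. This gives the two corner-filling tiles exactly as in the paper.

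The gap is in the ``moreover'' part, and you correctly sense it: your proposed closing argument (summing tile angles at the endpoint $p$ of $f$ inside the residual triangle) does not actually control whether $f$ is subdivided by several tile edges on its far side --- an angle condition at $p$ says nothing about what happens along the relative interior of $f$. The missing ingredient is Lemma~\ref{lemma_fact}(d): since $\varphi$ is the \emph{minimum} angle of $\T$, the opposite edge $f$ is the \emph{shortest} edge of $\T$. Any tile edge lying along $f$ is a sub-arc of $f$ (it cannot protrude past the endpoints $p,q$, where the boundary of the residual triangle turns a corner), and every tile edge has length $a$, $b$ or $c$, each at least $|f|$. Hence $f$ cannot be partitioned into more than one such sub-arc, so exactly one tile shares the whole edge $f$ with the corner-filling tile. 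This one-line metric observation replaces the ``rigidity'' argument you were hoping to assemble from $a,b,c<\pi$ and congruence, which by itself would not rule out a subdivision of $f$.
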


\begin{proof}
Since $\varphi$ is the minimum angle of $\T$, every corner tile must be corner-filling. By Lemma~\ref{lemma_fact}(f), a corner-filling tile contains only one vertex of the lune, hence there are at least two such tiles.
By Lemma~\ref{lemma_fact}(d), $f$ is the shortest edge of $\T$. The rest of the observation follows.
\end{proof}

\subsection{One indivisible edge-angle}

By Lemma~\ref{lemma_at-least-4-edges} and
Proposition~\ref{tvrzeni_no_2_edge_angles}, the simplex $S$ has only one
indivisible edge-angle $\T_0$. This means that all the remaining edge-angles of $S$ can by tiled with $\T_0$. In
particular, the spherical area of every
 spherical triangle representing an edge-angle of $S$ is an integer multiple of
the spherical area of $\T_0$. Let $T_0$ be the triangle in $c(S)$ corresponding to
the spherical triangle $\T_0$.

We say that a spherical triangle $\T$ has \emph{type $(\varphi\psi\chi)$} if its
internal angles are $\varphi,\psi$ and $\chi$; in this case we write $\T=(\varphi\psi\chi)$. 
We sometimes write the type of $\T$ as $(\varphi,\psi,\chi)$, to avoid confusion when substituting linear combinations of angles. We say that $\T$ has type
$(\varphi**)$ or $(\varphi\psi*)$ if it has type $(\varphi\psi\chi)$ for some angles $\psi,\chi$,
which may also be equal to $\varphi$ or to each other. Note that a spherical triangle
$\T=(\varphi\psi\chi)$ corresponds to a triangle $T=(\varphi\psi\chi)$ in
$c(S)$.

To simplify the notation, we will label the vertices of the Coxeter diagram $c(S)$ by $u,v,w,x,y$ instead of $F_1, \dots, F_5$. 

A Coxeter diagram $c(S)$ is {\em rich\/} if there is a triangle $T$ such that under the action of the symmetry group of $c(S)$ on the set of triangles in $c(S)$, the copies of $T$ form at least four different orbits. In this case we also say that the diagram $c(S)$ is $T$-{\em rich\/}. Lemma~\ref{lemma_at-least-4-edges} and Debrunner's
lemma imply the following important fact.

\begin{fact}
The Coxeter diagram of $S$ is $T_0$-rich.
\end{fact}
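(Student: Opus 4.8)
The plan is to show that the Coxeter diagram $c(S)$ is $T_0$-rich directly from Lemma~\ref{lemma_at-least-4-edges} and Debrunner's lemma. By Lemma~\ref{lemma_at-least-4-edges}, the edges of $S$ with edge-angle $\T_0$ have at least four different lengths; in particular there are at least four edges $e_1,e_2,e_3,e_4$ of $S$ whose edge-angle is $\T_0$ and whose lengths $|e_1|,|e_2|,|e_3|,|e_4|$ are pairwise distinct. Each such edge $e_i$ corresponds to a triangle $T^{(i)}$ in $c(S)$ which is a copy of $T_0$ (by the Observation identifying edge-angles of a $4$-simplex with triangles in the Coxeter diagram). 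So $c(S)$ contains at least four copies of $T_0$.

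The key step is to argue that these four copies lie in four distinct orbits under the action of the symmetry group $G$ of $c(S)$ on the set of triangles of $c(S)$. Suppose two of them, say $T^{(i)}$ and $T^{(j)}$ with $i\neq j$, were in the same orbit: then there is a symmetry $\Phi$ of $c(S)$ with $\Phi(T^{(i)}) = T^{(j)}$. By Debrunner's lemma, $\Phi$ is induced by a Euclidean symmetry $\varphi$ of $S$, and the correspondence between vertices of $S$ and facets (vertices of $c(S)$) shows that $\varphi$ maps the edge $e_i$ of $S$ (the intersection of the three facets whose vertices in $c(S)$ span $T^{(i)}$) to the edge $e_j$ of $S$. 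Since $\varphi$ is an isometry, $|e_i| = |e_j|$, contradicting the choice of $e_1,\dots,e_4$ as having pairwise distinct lengths. Hence the four copies $T^{(1)},\dots,T^{(4)}$ of $T_0$ represent at least four different orbits, so $c(S)$ is $T_0$-rich.

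One small point to make precise: a triangle in $c(S)$ is determined by its three vertices, i.e.\ by the three facets of $S$ it corresponds to, and an edge $e$ of $S$ with edge-angle $\T_0$ corresponds to the triangle spanned by the vertices $F_i$ of $c(S)$ for which $F_i$ does \emph{not} contain $e$ (equivalently, to the three facets containing $e$); distinct edges give distinct triangles, and the edge-angle at $e$ is exactly the spherical triangle whose angles are the dihedral angles along the three facets meeting at $e$, which are the labels of the three edges of that triangle in $c(S)$. I expect no real obstacle here: the statement is essentially a bookkeeping consequence of Lemma~\ref{lemma_at-least-4-edges} plus Debrunner's lemma, and the only care needed is the routine verification that the orbit-to-orbit map on triangles really does transport edge-lengths of $S$ via the Euclidean symmetry. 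Thus the Coxeter diagram of $S$ is $T_0$-rich.
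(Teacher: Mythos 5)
Your argument is correct and is precisely the reasoning the paper intends: the paper simply asserts that Lemma~\ref{lemma_at-least-4-edges} together with Debrunner's lemma implies the Fact, and you have filled in exactly those details (four edges of pairwise distinct lengths with edge-angle $\T_0$ give four copies of $T_0$ in $c(S)$ that no symmetry can identify). One tiny slip in your bookkeeping aside: the triangle of $c(S)$ attached to an edge $e$ is spanned by the three facets that \emph{do} contain $e$, so your parenthetical is the correct description while the clause preceding it is not; this does not affect the argument.
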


A spherical triangle $\T$ is {\em realizable} if $\T$ can be tiled with $\T_0$.
A triangle $T$ is {\em realizable} if its corresponding spherical triangle $\T$
is realizable.

The strategy of the proof is the following.

\begin{itemize}

\item Find all possible types of $\T_0$.

\item For every such triangle $\T_0$, let $\alpha$ be the minimal angle in
$\T_0$ (and $\beta$ the second minimal angle, if applicable). Investigate which
spherical triangles of type $(\alpha**)$ (or $(\beta**)$, if needed) are realizable.

\item Find all $T_0$-rich Coxeter diagrams whose all
$(\alpha**)$-triangles (and $(\beta**)$-triangles) are realizable.

\item Verify that such diagrams do not satisfy Fiedler's theorem.

\end{itemize}

We start with a simple observation about the Coxeter diagram of $S$.

\begin{observation}\label{obs_at_least_2_types}
The Coxeter diagram of $S$ has at least two different types of triangles.
\end{observation}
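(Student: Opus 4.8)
The plan is to argue by contradiction. Recall that the Coxeter diagram $c(S)$ is the complete graph $K_5$ on the five facets of $S$ with labeled edges, so it has $\binom{5}{3}=10$ triangles, one for each of the ten edges of $S$. Assume that all ten of these triangles have the same type $(\varphi\psi\chi)$. I would then split the analysis according to how many of $\varphi,\psi,\chi$ are distinct, which gives two essentially different situations.

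First I would handle the case in which the common type is \emph{not} $(\varphi\varphi\varphi)$: then at least one label, say $\chi$, occurs exactly once in the multiset $\{\varphi,\psi,\chi\}$, so every triangle of $c(S)$ contains exactly one $\chi$-edge. Double count the incidences between triangles and $\chi$-edges: from the triangle side one gets exactly $10$, while from the edge side one gets $3$ times the number of $\chi$-edges, since every edge of $K_5$ lies in exactly three triangles. This would force $3\mid 10$, a contradiction.

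It then remains to exclude the monochromatic case, where every triangle has type $(\varphi\varphi\varphi)$ for a single label $\varphi$. Here the incidence count is consistent, so a different argument is needed. Since every edge of $K_5$ lies in some triangle and that triangle is $(\varphi\varphi\varphi)$, every edge of $c(S)$ carries the label $\varphi$; thus $c(S)$ is $K_5$ with all edges equally labeled, its symmetry group is the full symmetric group $S_5$, and this group acts transitively on the ten triangles of $c(S)$. Hence the copies of $T_0$ form a single orbit and $c(S)$ is not $T_0$-rich, contradicting the Fact that the Coxeter diagram of $S$ is $T_0$-rich. (Alternatively, one could observe via Fiedler's theorem (Theorem~\ref{theorem_fiedler}) that an equally labeled $K_5$ forces $\cos\varphi=1/4$, so $S$ is the regular $4$-simplex, whose edges all have the same length, contradicting Lemma~\ref{lemma_at-least-4-edges}.)

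Both cases yield a contradiction, which proves the observation. The only mild subtlety — and the single place where pure double counting is insufficient — is the monochromatic case, which is why the argument has to invoke the already established $T_0$-richness of $c(S)$ (or, equivalently, the edge-length spread from Lemma~\ref{lemma_at-least-4-edges}); everything else is elementary combinatorics of $K_5$.
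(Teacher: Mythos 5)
Your proof is correct and follows essentially the same route as the paper: the identical double-counting argument (ten triangles, each edge lying in three of them, and $\gcd(10,3)=1$) forces the monochromatic case, which is then dispatched in one step. The only cosmetic difference is in that last step: the paper concludes that $S$ must be the regular simplex and contradicts Lemma~\ref{lemma_at-least-4-edges} directly (your parenthetical alternative), whereas your primary argument uses the transitive $S_5$-action to violate $T_0$-richness --- both are legitimate, since the Fact that $c(S)$ is $T_0$-rich is established before this observation.
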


\begin{proof}
Suppose for contrary that all triangles in $c(S)$ are of the same type
$T=(\varphi_1\varphi_2\varphi_3)$. By double-counting, the numbers of
occurrences of the edge types in $c(S)$ are in the same ratio as in $T$. Since
the numbers $10$ and $3$ are relatively prime, it follows that
$\varphi_1=\varphi_2=\varphi_3$ and thus all dihedral angles in $S$ are equal.
But then $S$ is the regular simplex, which contradicts
Lemma~\ref{lemma_at-least-4-edges}.
\end{proof}


\subsubsection{Conditions on dihedral angles}

Here we prove several facts about the dihedral angles of $S$, which we use
further to restrict the set of possible types of the indivisible triangle
$\T_0$.

\begin{lemma}\label{lemma_o_vyskladani_cocky}
Let $\varphi_1, \varphi_2, \varphi_3$ be the angles of $\T_0$. Then for every $i \in \{1,2,3\}$, the spherical lune with angle $\varphi_i$ can be tiled with $\T_0$.
\end{lemma}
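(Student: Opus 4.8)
The plan is to use the fact that $S$ is a $k$-reptile to reduce this to a statement about tilings of edges of $S$, and then translate it into spherical geometry. Fix $i$ and let $\varphi = \varphi_i$. First I would recall that the edge-angle $\T_0$ is the internal $2$-dimensional angle at an interior point of some edge $e$ of $S$, and the dihedral angles of $S$ that appear as angles of $\T_0$ are exactly the dihedral angles along the three two-dimensional faces of $S$ meeting at $e$. The key observation is that the $\varphi_i$-lune is the edge-angle of the \emph{two-dimensional} cone spanned by a single two-dimensional face of $S$ along $e$ — more precisely, if $F$ is the $2$-face of $S$ containing $e$ whose dihedral angle along $e$ equals $\varphi_i$, then locally near an interior point of $e$ the simplex $S$ is contained in a dihedral wedge of angle $\varphi_i$ (the wedge bounded by the two facets of $S$ meeting along $F$), and $S$ actually \emph{fills} a neighborhood of $e$ inside this wedge. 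Taking the link at an interior point of $e$ and intersecting with the hyperplane orthogonal to $e$, the wedge becomes the $\varphi_i$-lune and $\T_0$ sits inside it.

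Next I would exploit the reptile tiling. Since $S$ is a $k$-reptile, the small copies $S_1,\dots,S_k$ tile $S$, and in particular they tile a neighborhood of an interior point $p$ of the edge $e$. Because $\T_0$ is the \emph{indivisible} edge-angle, by the argument already used in the proof of Lemma~\ref{lemma_at-least-4-edges}, the edge $e$ is subdivided by edges of the small simplices $S_j$, each of which again carries edge-angle $\T_0$ (a smaller copy, but congruent after rescaling, hence an isometric copy of $\T_0$ on the unit sphere). The two facets of $S$ bounding the wedge of angle $\varphi_i$ along $e$ are each tiled by facets of the $S_j$'s; taking the link at $p$, the tiles $S_j$ incident to $p$ induce a tiling of the $\varphi_i$-lune by copies of the edge-angles of the $S_j$'s, and every tile touching the boundary great-circles of the lune — indeed every tile, since the $S_j$ touching $p$ all have an edge through $p$ along the direction of $e$ — has the relevant edge-angle equal to a copy of $\T_0$. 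This produces the desired tiling of the $\varphi_i$-lune by copies of $\T_0$.

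The main obstacle I anticipate is making the link/neighborhood argument fully rigorous: one must argue that the collection of small simplices $S_j$ whose closure contains the point $p$ actually covers a full punctured neighborhood of $p$ within the wedge, with disjoint interiors, so that their links give a genuine tiling of the $\varphi_i$-lune (and not just a partial covering). This is where one needs that $p$ is chosen generically in the interior of $e$ — avoiding the finitely many lower-dimensional strata of the tiling, so that near $p$ the tiling looks locally like a tiling of the wedge by wedges — together with the indivisibility of $\T_0$ to guarantee that each contributing tile has an edge along $e$ at $p$ rather than merely touching $e$ at an isolated point. A secondary point to be careful about is the identification of "an edge-angle of $S_j$" with "a copy of $\T_0$": the $S_j$ are similar to $S$, hence have the same dihedral angles and the same set of edge-angles as $S$, and after normalizing to the unit sphere the edge-angle of $S_j$ along its edge through $p$ is literally the same spherical triangle $\T_0$. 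Once these local-geometry details are pinned down, the conclusion — that the $\varphi_i$-lune admits a tiling by copies of $\T_0$ — is immediate, and this is exactly the statement of Lemma~\ref{lemma_o_vyskladani_cocky}.
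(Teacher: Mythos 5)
Your argument breaks down at the ``key observation''. You claim that near an interior point $p$ of an edge $e$ of $S$, the simplex $S$ \emph{fills} a neighborhood of $e$ inside the dihedral wedge of angle $\varphi_i$ bounded by the two facets meeting along the $2$-face $F$. This is false: an edge of a four-dimensional simplex lies on \emph{three} facets, so all three of the corresponding supporting hyperplanes pass through $p$, and locally $S$ is the intersection of the wedge with a third half-space whose boundary contains $e$. Consequently the link of $S$ at $p$ (intersected with the hyperplane orthogonal to $e$) is the spherical triangle $\T_0$ itself, a proper subset of the $\varphi_i$-lune, and the tiles incident to $p$ induce a tiling of $\T_0$ by edge-angles of tiles --- which is the (already known) input to Lemma~\ref{lemma_at-least-4-edges}, not a tiling of the lune. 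Your construction therefore never produces the $\varphi_i$-lune at all.

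The correct construction, which is what the paper does, takes the link at a point $x_i$ lying in the relative \emph{interior of a $2$-face} of $S$ with dihedral angle $\varphi_i$, where only two facets of $S$ meet and $S$ genuinely fills the wedge; to make that link two-dimensional and nontrivial one needs $x_i$ to lie on an edge of some tile, which the paper guarantees by iterating the reptile structure ($S$ is $k^n$-reptile) until some tile has an entire edge inside the interior of that $2$-face. Even then two further issues remain that your proposal does not address: (i) tiles incident to $x_i$ may meet it in the relative interior of one of their own $2$-faces and thus contribute a sub-lune of some smaller angle $\psi$ rather than a spherical triangle --- the paper disposes of these by sorting $\varphi_1\le\varphi_2\le\varphi_3$ and inducting on $i$, using that $\psi$ is a nonnegative integer combination of the smaller $\varphi_j$; and (ii) the spherical triangles that do appear are arbitrary edge-angles of $S$, which are tileable by $\T_0$ only because we are in the case of a single indivisible edge-angle, not because ``each carries edge-angle $\T_0$'' as you assert. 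As written, the proposal does not prove the lemma.
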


\begin{proof}
Assume that $\varphi_1 \le \varphi_2 \le \varphi_3$. Fix $i \in \{1,2,3\}$.
If $S$ is a $k$-reptile simplex for some $k>1$, then by induction, $S$ is
$k^n$-reptile for every $n\ge 1$. In particular, there is a tiling of $S$ with
simplices similar to $S$ where some of the tiles, $S'$, has an edge $e$ that is contained in the
interior of a $2$-face 
of $S$ with dihedral angle $\varphi_i$. 
Select an interior
point $x_i$ of $e$ that misses all vertices of all the tiles. Let $h_i$ be the hyperplane orthogonal to $e$ and containing $x_i$. In a small neighborhood of $x_i$ in $h_i$, the tiles with $x_i$ on their boundary induce a tiling of a wedge with angle $\varphi_i$ by triangular cones originating in $x_i$ and possibly by wedges with angles $\psi < \varphi_i$, where $\psi$ is an internal angle of some realizable triangle $\T$.

This is analogous to the situation in a tiling of a three-dimensional simplex in the neighborhood of an internal point $x$ of an edge such that $x$ is also a vertex of some tile. In the intersection of $h_i$ with a small sphere centered in $x_i$, we thus obtain a tiling of the spherical lune with angle $\varphi_i$ by spherical triangles corresponding to edge-angles of the tiles and possibly by spherical lunes with angles $\psi < \varphi_i$ corresponding to dihedral angles of the tiles. 
Since $\T$ can be tiled with $\T_0$, 
it follows that $\psi$ is a nonnegative integer combination of the angles $\varphi_j$ where $1\le j < i$. 
This means that the $\psi$-lune can be tiled by lunes with angles $\varphi_j$ where $1\le j < i$. 

Observe that since $x_i$ is an internal point of an edge of at least one tile, the tiling of the $\varphi_i$-lune contains at least one spherical triangle.
Thus for $i=1$, the tiling consists solely of realizable spherical triangles. For $i>1$, the $\varphi_i$-lune can be tiled with realizable spherical triangles and possibly $\varphi_j$-lunes with $j<i$. The lemma follows by induction on $i$.
\end{proof}

The following statement is a stronger variant of the Bricard's condition for
equidecomposable polyhedra~\cite{aigner_ziegler,benko}, \cite[Chapter
15]{pak}.

\begin{lemma}\label{lemma_bric}
Let $\varphi_1, \varphi_2, \varphi_3$ be the angles of $\T_0$. Then for every $i \in \{1,2,3\}$, there exist
nonnegative integers $m_{i,1}, m_{i,2}, m_{i,3}$ such that $m_{i,i} > 0$ and $m_{i,1}\varphi_1+m_{i,2}\varphi_2+m_{i,3}\varphi_3=\pi.$
\end{lemma}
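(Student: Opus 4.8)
The plan is to carry over to the sphere the classical tiling proof of Bricard's condition, using the lune decompositions provided by Lemma~\ref{lemma_o_vyskladani_cocky}. Fix $i\in\{1,2,3\}$ and assume $\varphi_1\le\varphi_2\le\varphi_3$. By Lemma~\ref{lemma_o_vyskladani_cocky} the $\varphi_i$-lune $\mathcal{L}$ can be tiled by realizable spherical triangles together with $\varphi_j$-lunes with $j<i$; by the induction used in the proof of that lemma each such smaller lune is itself tiled by realizable triangles, and every realizable triangle is tiled by copies of $\T_0$. Hence $\mathcal{L}$ is tiled by, say, $N$ copies of $\T_0$, and comparing areas via Lemma~\ref{lemma_fact}(a) gives $N(\varphi_1+\varphi_2+\varphi_3-\pi)=2\varphi_i$, equivalently $N\pi=(N-2)\varphi_i+\sum_{j\ne i}N\varphi_j$ — a relation in which $\varphi_i$ occurs with a positive coefficient as soon as $N\ge 3$, but which carries the unwanted factor $N$ in front of $\pi$.

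To get a genuine relation with $\pi$ I use the vertices of the tiling. Each of the two boundary edges of $\mathcal{L}$ is a geodesic arc of length $\pi$, whereas every side of $\T_0$ has length $<\pi$ by Lemma~\ref{lemma_fact}(f); therefore each boundary edge of $\mathcal{L}$ is properly subdivided by vertices of the tiling. If $p$ is such a vertex lying in the relative interior of a boundary edge, the tiles incident to $p$ fill a half-disc around $p$, so the sum of their angles at $p$ equals $\pi$, and every one of those angles is $\varphi_1$, $\varphi_2$ or $\varphi_3$. This yields $\pi=n_1\varphi_1+n_2\varphi_2+n_3\varphi_3$ with nonnegative integers $n_j$, not all zero.

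The delicate part — and the step I expect to be the real obstacle — is to force the coefficient of $\varphi_i$ to be positive. I would examine the corner $v$ of $\mathcal{L}$. If two or more tiles meet at $v$, then $\varphi_i=\sum_s\psi_s$ with at least two summands $\psi_s\in\{\varphi_1,\varphi_2,\varphi_3\}$, necessarily all different from $\varphi_i$, so $\varphi_i$ is a nonnegative integer combination of the other angles of $\T_0$; this extra relation I would combine with the one from the previous paragraph. If only one tile $T$ meets $v$, then $T$ is corner-filling: its angle at $v$ is $\varphi_i$, and $\mathcal{L}\setminus T$ is a spherical triangle, again tiled by $\T_0$, whose angles at the two points where $T$ meets $\partial\mathcal{L}$ are $\pi-\beta$ and $\pi-\gamma$, where $\beta,\gamma$ are the other two angles of $T$; hence $\pi-\beta$ and $\pi-\gamma$ are nonnegative integer combinations of $\varphi_1,\varphi_2,\varphi_3$, so $\pi$ equals $\beta$ (or $\gamma$) plus a nonnegative combination. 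If $\beta$ or $\gamma$ equals $\varphi_i$ we are done; otherwise $\varphi_i$ occurs exactly once among the angles of $\T_0$, and I would recurse on the smaller triangle $\mathcal{L}\setminus T$, which still has a vertex of angle $\varphi_i$ at the antipode of $v$, invoking Observation~\ref{obs_corner} when $\varphi_i=\varphi_1$ to control the corner tiles there; the recursion terminates because the number of tiles strictly drops. When even this fails to surface $\varphi_i$ in a straight-angle relation — which can genuinely happen for the $\varphi_i$-lune itself, for instance when $\T_0$ has area $\varphi_i$ and $\pi=\varphi_j+\varphi_k$ — I would run the same analysis on a $\varphi_j$-lune with $j\ne i$, which is also $\T_0$-tileable by Lemma~\ref{lemma_o_vyskladani_cocky}, and argue that among the three lunes at least one must produce a boundary vertex at which $\varphi_i$ appears. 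Making this dichotomy and the accompanying bookkeeping airtight is the technical heart of the argument; unlike the one-indivisible-edge-angle part of Section~\ref{s:main_pf}, it should not require Fiedler's theorem.
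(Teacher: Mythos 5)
Your proposal assembles the right ingredients --- tilings of lunes from Lemma~\ref{lemma_o_vyskladani_cocky}, straight-angle relations at tiling vertices in the relative interior of a lune's boundary edges, and corner-filling tiles via Observation~\ref{obs_corner} --- and the second paragraph correctly yields \emph{some} relation $\pi=n_1\varphi_1+n_2\varphi_2+n_3\varphi_3$. But the lemma's whole content is the positivity of $m_{i,i}$ for each $i$, and that is exactly the part you leave open. The ``combine'' step in Case~A does not work as stated: from $\varphi_i=\sum_{j\neq i}c_j\varphi_j$ and $\pi=\sum_j n_j\varphi_j$ you can only trade the $\varphi_j$'s back for a $\varphi_i$ if $n_j\ge c_j$ for every $j\neq i$, which nothing guarantees. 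The recursion on $\mathcal{L}\setminus T$ and the final fallback (``among the three lunes at least one must produce a boundary vertex at which $\varphi_i$ appears'') are asserted rather than proved, and you acknowledge this yourself. As written, the proof is incomplete.

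The paper closes the gap with a cleaner observation that you circle around but never quite state. Order the angles $\varphi_1\le\varphi_2\le\varphi_3$ and tile the $\varphi_1$-lune. By Observation~\ref{obs_corner} every corner tile there is corner-filling, so the corner-filling tile's \emph{other two} vertices --- the ones carrying its angles $\varphi_2$ and $\varphi_3$ --- are interior points of the two boundary edges of the lune. The straight-angle relations at those two points therefore automatically contain $\varphi_2$ and $\varphi_3$ respectively, settling $i=2$ and $i=3$ in one stroke; there is no need to fight for the coefficient of the lune's own angle. For $i=1$ one then distinguishes: if $\varphi_1\mid\varphi_2$, substitute into the $i=2$ relation; if not, then in a tiling of the $\varphi_2$-lune no corner angle can be split (a split would express $\varphi_2$ as a sum of at least two angles each $\ge\varphi_1$ and $<\varphi_2$, hence all equal to $\varphi_1$, contradicting $\varphi_1\nmid\varphi_2$), so the corner tiles are again corner-filling and their other vertices carry $\varphi_1$ and $\varphi_3$, giving a relation containing $\varphi_1$. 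Reorganizing your argument around which angles appear at the \emph{non-corner} vertices of the corner-filling tile, rather than trying to surface $\varphi_i$ in the $\varphi_i$-lune itself, is what makes the bookkeeping terminate.
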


\begin{figure}
 \begin{center}
   \includegraphics{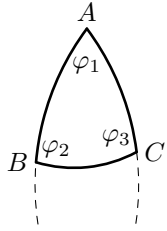}  
 \caption{A corner-filling tile in the $\varphi_1$-lune.}
 \label{obr_corner_filling}
 \end{center}
\end{figure}

\begin{proof}
Assume that $\varphi_1\le\varphi_2\le\varphi_3$. By Lemma~\ref{lemma_o_vyskladani_cocky}, the $\varphi_1$-lune $\mathcal{L}_1$ is tiled with $\T_0$. By Observation~\ref{obs_corner}, there is a corner-filling tile $\T_0^1$ whose vertices with inner angles $\varphi_2$ and $\varphi_3$ are internal points of the edges of $\mathcal{L}_1$; see Figure~\ref{obr_corner_filling}. In a small neighborhood of each of these two points we observe a tiling of the straight angle by the angles of $\T_0$, including $\varphi_2$ or $\varphi_3$, respectively. This shows the lemma for $i=2$ and $i=3$.

To show the lemma for $i=1$, we distinguish two cases. If $\varphi_1$ divides $\varphi_2$, the claim follows from the case $i=2$. Otherwise, we use Lemma~\ref{lemma_o_vyskladani_cocky} again, now for the $\varphi_2$-lune $\mathcal{L}_2$. The argument is analogous to the previous case since in the tiling of $\mathcal{L}_2$ with $\T_0$ each corner tile is corner-filling.
\end{proof}


For the rest of this section, let $\alpha$ be the minimum angle of $\T_0$. 

\begin{corollary}\label{cor_alfa_mensi_nez_pi_pul}
We have $\alpha < \pi/2$. 
\end{corollary}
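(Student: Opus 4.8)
The plan is to assume for contradiction that $\alpha\ge\pi/2$, show that this forces $\T_0$ to be the all-right-angled spherical triangle (the spherical octant), and then contradict Observation~\ref{obs_at_least_2_types}. Throughout I write the angles of $\T_0$ as $\varphi_1\le\varphi_2\le\varphi_3$, so $\alpha=\varphi_1$, and I use that each $\varphi_j$, being a dihedral angle, satisfies $0<\varphi_j<\pi$.

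First I would pin down $\T_0$ using the Bricard-type condition. If $\alpha=\varphi_1\ge\pi/2$, then all three angles of $\T_0$ lie in $[\pi/2,\pi)$. Apply Lemma~\ref{lemma_bric} with $i=3$: there are nonnegative integers $m_{3,1},m_{3,2},m_{3,3}$ with $m_{3,3}\ge 1$ and $m_{3,1}\varphi_1+m_{3,2}\varphi_2+m_{3,3}\varphi_3=\pi$. Since each $\varphi_j\ge\pi/2$, this gives $(m_{3,1}+m_{3,2}+m_{3,3})\cdot\tfrac{\pi}{2}\le\pi$, so $m_{3,1}+m_{3,2}+m_{3,3}\le 2$; the sum cannot equal $1$ (that would give $\varphi_3=\pi$), so it equals $2$. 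If $m_{3,3}=2$ then $\varphi_3=\pi/2$; otherwise $m_{3,3}=1$ and $\varphi_3+\varphi_j=\pi$ for some $j\in\{1,2\}$, and since $\varphi_3,\varphi_j\ge\pi/2$ this again yields $\varphi_3=\pi/2$. Hence $\pi/2\le\varphi_1\le\varphi_2\le\varphi_3=\pi/2$, so all angles of $\T_0$ equal $\pi/2$ and $\T_0$ is a spherical octant (of area $\pi/2$).

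Next I would establish an elementary claim: a spherical triangle all of whose angles are smaller than $\pi$ can be tiled by copies of an octant only if it is itself an octant. Indeed, at a corner $v$ of such a triangle $\T$, any tile containing $v$ must have $v$ as one of its own vertices — if $v$ lay in the relative interior of an edge of a tile, that tile would occupy angle $\pi$ at $v$, exceeding the interior angle of $\T$ at $v$. Each tile at $v$ therefore contributes exactly $\pi/2$ to the angle of $\T$ at $v$; since that angle is positive and strictly less than $\pi$, exactly one tile meets $v$ and the angle is $\pi/2$. As this holds at all three corners, $\T$ is an octant.

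Finally I would combine these: since $S$ has $\T_0$ as its only indivisible edge-angle, every edge-angle of $S$ can be tiled by copies of $\T_0$, so by the claim every edge-angle of $S$ is an octant; hence every triangle of $c(S)$ has type $(\pi/2,\pi/2,\pi/2)$ and all dihedral angles of $S$ equal $\pi/2$. This contradicts Observation~\ref{obs_at_least_2_types} (alternatively, the matrix $A$ of Theorem~\ref{theorem_fiedler} would be $-I_5$, which is nonsingular). The only point that needs a little care is the octant-tiling claim — specifically ruling out that a tile meets a corner of $\T$ along an edge rather than at a vertex — but the strict bound $<\pi$ on the corner angle makes this routine; all the rest is a direct application of Lemma~\ref{lemma_bric}.
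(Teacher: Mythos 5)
Your proof is correct, and it reaches the same endgame as the paper (force $\T_0=(\pi/2,\pi/2,\pi/2)$, then contradict Observation~\ref{obs_at_least_2_types}), but the way you pin down the octant is genuinely different. The paper first applies Lemma~\ref{lemma_bric} with $i=1$ to get $\alpha\le\pi/2$, and then, in the boundary case $\alpha=\pi/2$, invokes Lemma~\ref{lemma_o_vyskladani_cocky} together with an area count in the $\pi/2$-lune ($\Delta(\T_0)\ge\pi/2$ and the lune has area $\pi$, so exactly two tiles fit) to conclude $\T_0$ is the octant. You instead assume $\alpha\ge\pi/2$ and apply Lemma~\ref{lemma_bric} with $i=3$ to the \emph{largest} angle: since all angles lie in $[\pi/2,\pi)$, the coefficient sum in $m_{3,1}\varphi_1+m_{3,2}\varphi_2+m_{3,3}\varphi_3=\pi$ must be exactly $2$, forcing $\varphi_3=\pi/2$ and hence all angles equal to $\pi/2$. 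This bypasses the lune-tiling lemma and the area argument entirely, at the modest cost of your explicit octant-tiling claim (every tile at a corner must have that corner as a vertex, contributing $\pi/2$ each) — which is in any case the same reasoning the paper compresses into the sentence that all angles of a triangle tiled with $\T_0$ are multiples of $\pi/2$. Your closing alternative via Fiedler's theorem ($A=-I_5$ is nonsingular) is also valid and gives an independent second contradiction.
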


\begin{proof}
Lemma~\ref{lemma_bric} implies that $\alpha \le \pi/2$. By Lemma~\ref{lemma_o_vyskladani_cocky}, the $\alpha$-lune is tiled with at least two copies of $\T_0$.
Now suppose that $\alpha = \pi/2$. 
Since $\Delta(\T_0)\ge \pi/2$ and the spherical area of the $\pi/2$-lune is $\pi$, the tiling of the $\pi/2$-lune consists of precisely two copies of $\T_0$ and $\Delta(\T_0) = \pi/2$. This means that $\T_0=(\pi/2,\pi/2,\pi/2)$. The angles of every spherical triangle tiled with $\T_0$ must be integer multiples of $\pi/2$, thus $\T_0$ is the only realizable triangle. This contradicts Observation~\ref{obs_at_least_2_types}.
\end{proof}

\begin{corollary}\label{cor_aaa}
We cannot have $\T_0=(\alpha\alpha\alpha)$.
\end{corollary}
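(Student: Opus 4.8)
The plan is to rule out $\T_0=(\alpha\alpha\alpha)$ by combining the area constraints from the previous lemmas with the requirement (the ``$T_0$-rich'' fact and Observation~\ref{obs_at_least_2_types}) that $c(S)$ has a sufficiently varied collection of realizable triangles. First I would apply Lemma~\ref{lemma_bric} with all three angles equal to $\alpha$: this yields a nonnegative integer $m$ with $m>0$ and $m\alpha=\pi$, so $\alpha=\pi/n$ for some integer $n$. Corollary~\ref{cor_alfa_mensi_nez_pi_pul} gives $\alpha<\pi/2$, hence $n\ge 3$. By Lemma~\ref{lemma_fact}(a), $\Delta(\T_0)=3\alpha-\pi=3\pi/n-\pi=(3-n)\pi/n$, which is positive only when $n<3$ — a direct contradiction. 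This already finishes the proof, and the argument is short; the only subtlety is verifying that Lemma~\ref{lemma_bric} indeed forces $\alpha$ itself (not merely some multiple) to divide $\pi$, which is exactly what the $m_{i,i}>0$ clause in that lemma guarantees when we take $i$ to be the index of $\alpha$ and all $\varphi_j=\alpha$.

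Since the area computation alone settles it, I expect no real obstacle here; the main point to be careful about is the interplay of the hypotheses. Concretely, I would write: assume $\T_0=(\alpha\alpha\alpha)$. By Lemma~\ref{lemma_bric} there are nonnegative integers $m_1,m_2,m_3$ with $m_1+m_2+m_3\ge 1$ (in fact the relevant $m_{i,i}>0$) and $(m_1+m_2+m_3)\alpha=\pi$, so $\alpha=\pi/n$ with $n:=m_1+m_2+m_3$ a positive integer. By Corollary~\ref{cor_alfa_mensi_nez_pi_pul}, $\alpha<\pi/2$, so $n\ge 3$. But then by Lemma~\ref{lemma_fact}(a) the spherical area of $\T_0$ equals $3\alpha-\pi=(3-n)\pi/n\le 0$, contradicting the fact that a spherical triangle has positive area. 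Hence $\T_0$ cannot be of type $(\alpha\alpha\alpha)$.

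As an alternative route that avoids even invoking Lemma~\ref{lemma_bric}, one could argue directly from realizability: if $\T_0=(\alpha\alpha\alpha)$ then every triangle realizable by $\T_0$ has all angles equal to integer multiples of $\alpha$, and a positive-area spherical triangle with all angles in $\{\alpha,2\alpha,\dots\}$ forces, via Lemma~\ref{lemma_fact}(a), that the only possibility consistent with Corollary~\ref{cor_alfa_mensi_nez_pi_pul} is a very restricted set; one then checks this is incompatible with the Coxeter diagram of $S$ being $T_0$-rich (since $c(S)$ would have essentially one type of triangle, contradicting Observation~\ref{obs_at_least_2_types}). However, the area argument in the preceding paragraph is cleaner and self-contained, so that is the proof I would present.
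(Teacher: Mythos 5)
Your proof is correct and follows essentially the same route as the paper: Lemma~\ref{lemma_bric} gives $\alpha=\pi/n$, Corollary~\ref{cor_alfa_mensi_nez_pi_pul} gives $\alpha<\pi/2$ (so $n\ge 3$), and Lemma~\ref{lemma_fact}(a) (positive spherical excess, i.e.\ $\alpha>\pi/3$) yields the contradiction. The paper phrases this as the nonexistence of an integer $n$ with $\pi/3<\pi/n<\pi/2$, which is the same computation in a different order.
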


\begin{proof}
If $\T_0=(\alpha\alpha\alpha)$, then by Lemma~\ref{lemma_bric}, $\alpha=\pi/n$ for some positive integer $n$. But we have $\alpha>\pi/3$ by Lemma~\ref{lemma_fact}(a) and $\alpha<\pi/2$ by Corollary~\ref{cor_alfa_mensi_nez_pi_pul}; a contradiction.
\end{proof}

We are left with three main cases for the type of the indivisible triangle $\T_0$, 
according to the symmetries and relative sizes of its angles. 

\begin{enumerate}
\item[(A)] Exactly two angles in $\T_0$ are equal, but not to $\alpha$. We write $\T_0=(\alpha\beta\beta)$. 
\item[(B)] Exactly two angles in $\T_0$ are equal to $\alpha$. We write $\T_0=(\alpha\alpha\beta)$. 
\item[(C)] All three angles in $\T_0$ are different. We write $\T_0=(\alpha\beta\gamma)$.
\end{enumerate}
 For the rest of this section we assume that $\alpha < \beta < \gamma$.


\subsubsection{\texorpdfstring
{Case (A): $\T_0=(\alpha\beta\beta)$.}{The case (alpha, beta, beta)}}

By Lemma~\ref{lemma_o_vyskladani_cocky}, the $\alpha$-lune $\mathcal{L}_{\alpha}$ can be tiled with $\T_0$. By Observation~\ref{obs_corner}, there is a corner-filling tile $\T_0^1$ sharing its shortest edge with another tile $\T_0^2$. See Figure~\ref{obr_corner_filling_abb}.
In the neighborhood of either common vertex of $\T_0^1$ and $\T_0^2$, we see the straight angle tiled with two angles $\beta$ and possibly other angles $\alpha$ or $\beta$.  Since $\alpha + 2\beta > \pi$ by Lemma~\ref{lemma_fact}(a), the two angles $\beta$ already tile the straight angle and hence $\beta=\pi/2$. It follows that two copies of $\T_0$ tile the whole $\alpha$-lune and thus $\T_0$ is the only realizable spherical triangle of type $(\alpha**)$.
 (We recall that $(\alpha * *)$ stands for $(\alpha \varphi \psi)$, where $\varphi, \psi$ may also be equal to $\alpha$ or to each other.)

This implies that the Coxeter diagram of $S$ has two
vertex-disjoint $\alpha$-edges, since it is $(\alpha\beta\beta)$-rich and has no $(\alpha\alpha*)$-triangle. Every remaining edge is adjacent to at least one $\alpha$-edge, hence it is a 
$\beta$-edge. The resulting diagram has only two orbits of $(\alpha\beta\beta)$-triangles and so it is not $(\alpha\beta\beta)$-rich, a contradiction.

\begin{figure}
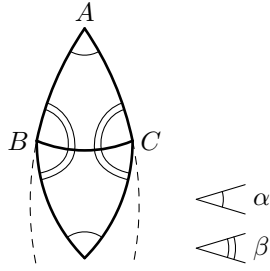

\begin{center}
  \begin{tabular}{ccc}
   \includegraphics{obrazky/sphere1.21}  & \includegraphics{obrazky/popisky.171}
  \end{tabular}
 \caption{A corner-filling $\alpha\beta\beta$-tile and its adjacent tile in the $\alpha$-lune.}
 \label{obr_corner_filling_abb}
 \end{center}
\end{figure}

\subsubsection{\texorpdfstring
{Case (B): $\T_0=(\alpha\alpha\beta)$.}{The case (alpha, alpha, beta)}} 

We start with an observation about $(\alpha\alpha\beta)$-rich Coxeter diagrams.

\begin{lemma}\label{lemma_dva_trojuhelniky_s_alphou}
There is at least one realizable spherical $(\alpha**)$-triangle different from
$\T_0$.
\end{lemma}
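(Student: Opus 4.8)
The plan is to argue by contradiction: assume $\T_0=(\alpha\alpha\beta)$ is the \emph{only} realizable spherical triangle of type $(\alpha**)$, and derive that the Coxeter diagram $c(S)$ cannot be $(\alpha\alpha\beta)$-rich, contradicting the Fact stated above. The key point is that under this assumption the combinatorial structure of $c(S)$ is severely constrained. First I would record that, since $\T_0=(\alpha\alpha\beta)$ is an $(\alpha\alpha*)$-triangle, the subgraph $H_\alpha$ of $c(S)$ formed by the $\alpha$-edges must contain at least one path of length two (indeed, every copy of $T_0$ contributes such a path). The absence of any other realizable $(\alpha**)$-triangle means that whenever two $\alpha$-edges meet at a vertex, the third edge of the triangle they span must be a $\beta$-edge with $\alpha+\alpha+\beta$ realizing exactly the type of $\T_0$ — equivalently, no triangle of $c(S)$ containing an $\alpha$-edge can have a type other than $(\alpha\alpha\beta)$ or a type $(\alpha\mu\nu)$ with $\mu,\nu\neq\alpha$ that is nonetheless non-realizable; but Lemma~\ref{lemma_o_vyskladani_cocky} forces such non-realizable $(\alpha**)$-types to be absent if they would have to tile the $\alpha$-lune. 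I expect that combining this with the area/tiling analysis of the $\alpha$-lune (as in Case (A)) pins down $\beta$ — likely $\beta=\pi/2$ or a similarly rigid value — and shows two copies of $\T_0$ already tile the $\alpha$-lune.

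Next I would enumerate the possibilities for $H_\alpha$ as a subgraph of $K_5$. Richness requires at least four orbits of copies of $T_0$ under $\mathrm{Aut}(c(S))$; in particular $c(S)$ must contain at least four $(\alpha\alpha\beta)$-triangles, hence $H_\alpha$ must contain at least four induced paths of length two. As in case (5) of the two-indivisible analysis, the subgraphs of $K_5$ with few edges and at least four such paths are limited (the four-cycle, the star $K_{1,4}$, the ``fork'', and denser graphs such as $K_{2,3}$, $K_4$ minus an edge, etc.). For each candidate $H_\alpha$ I would check whether the remaining edges can be labeled so that (i) every triangle on an $\alpha$-edge is an $(\alpha\alpha\beta)$-triangle (no other $(\alpha**)$-triangle is allowed), and (ii) the resulting diagram still has at least four orbits of $T_0$. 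The star $K_{1,4}$ produces no $(\alpha\alpha\beta)$-triangle-rich configuration since the center forces all triangles on $\alpha$-edges to share an edge; the four-cycle and fork cases, after forcing the $\beta$-edges on the paths of length two, turn out to have large automorphism groups ($\mathbb{Z}_2\times\mathbb{Z}_2$, $D_4$, or $\mathbb{Z}_2$) that collapse the copies of $T_0$ into at most three orbits; the denser cases either reintroduce an $(\alpha\alpha*)$-triangle with third edge not of the right type or again are too symmetric. In every case richness fails.

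The main obstacle will be the case analysis on $H_\alpha$: I need to be careful that the constraint ``$\T_0$ is the only realizable $(\alpha**)$-triangle'' is used in its strongest form, namely that \emph{any} triangle of $c(S)$ two of whose edges are $\alpha$-edges must be exactly $(\alpha\alpha\beta)$, and that no triangle with a single $\alpha$-edge can have a realizable type other than $(\alpha\alpha\beta)$ either — but a triangle with a single $\alpha$-edge need not be realizable, so the restriction there is weaker, and I must track which non-$\alpha$ edge-types are allowed to appear. The bookkeeping is essentially the same as in Section~4.1 but with the extra input from spherical geometry (Lemmas~\ref{lemma_o_vyskladani_cocky}, \ref{lemma_bric}, \ref{lemma_fact}) pinning down $\beta$; once $\beta$ is fixed and the ``only realizable $(\alpha**)$-triangle'' hypothesis is translated into the purely graph-theoretic statement above, the enumeration is finite and each branch ends either in a forbidden sub-triangle or in a symmetry group too large for richness, completing the contradiction.
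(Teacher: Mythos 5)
Your overall strategy (assume $\T_0=(\alpha\alpha\beta)$ is the only realizable $(\alpha**)$-triangle and show the Coxeter diagram cannot be $T_0$-rich) matches the paper's, but there is a genuine gap that derails the execution: you assert that ``a triangle with a single $\alpha$-edge need not be realizable.'' This is false in the present setting, and the paper's one-line proof rests precisely on the opposite fact. Since $S$ has exactly one indivisible edge-angle (Proposition~\ref{tvrzeni_no_2_edge_angles}), \emph{every} edge-angle of $S$ can be tiled with $\T_0$, and every one of the ten triangles of $c(S)$ represents an edge-angle; hence every triangle of $c(S)$ is realizable. Under the contradiction hypothesis, a triangle of $c(S)$ with exactly one $\alpha$-edge would therefore be a realizable $(\alpha**)$-triangle different from $\T_0$, so it cannot occur: every triangle of $c(S)$ meeting an $\alpha$-edge is a copy of $(\alpha\alpha\beta)$. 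Consequently, for every $\alpha$-edge $uv$ and every further vertex $w$, exactly one of $uw,vw$ is an $\alpha$-edge; this forces the $\alpha$-edges to form a spanning complete bipartite graph ($K_{1,4}$ or $K_{2,3}$) and all remaining edges to be $\beta$-edges. The two resulting diagrams have symmetry groups $S_4$ and $S_2\times S_3$, giving one, respectively two, orbits of $(\alpha\alpha\beta)$-triangles --- far fewer than the four required by richness.

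Because you discard this key fact, your plan replaces a forced structure by an open-ended enumeration of candidate subgraphs $H_\alpha$ (four-cycle, fork, $K_{1,4}$, denser graphs, \dots) in which the labels of the non-$\alpha$ edges are not determined, so the automorphism groups and orbit counts you claim in each branch ($\mathbb{Z}_2\times\mathbb{Z}_2$, $D_4$, at most three orbits, etc.) are asserted by analogy with Section~4.1 rather than derived, and cannot be computed without knowing the remaining labels. (With the correct fact in hand, the four-cycle and fork cases do not arise at all, since each contains a triangle with exactly one $\alpha$-edge.) The preliminary detour through Lemmas~\ref{lemma_o_vyskladani_cocky} and~\ref{lemma_bric} to pin down $\beta$ is likewise unnecessary for this lemma: no spherical geometry is needed once one observes that all triangles of $c(S)$ are realizable.
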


\begin{proof}
If $\T_0=(\alpha\alpha\beta)$ is the only realizable $(\alpha**)$ triangle, then
the $\alpha$-edges form a spanning complete bipartite subgraph in the Coxeter diagram of
$S$ and all the remaining edges are $\beta$-edges. The
$(\alpha\alpha\beta)$-triangles then form exactly two orbits, a contradiction.
\end{proof}

By Lemma~\ref{lemma_bric}, there exist integers $m_1 \ge 0$ and $m_2 \ge 1$ such that 
$m_1 \alpha + m_2\beta = \pi$. 
We distinguish two cases.

\begin{enumerate}
 \item[(1)] $m_1 \neq 0$. Since $2\alpha + \beta > \pi$ by Lemma~\ref{lemma_fact}(a), we have $m_1=m_2 = 1$ and thus $\alpha + \beta = \pi$.

\item[(2)] $m_1 = 0$. Then $\beta = \pi/m_2$. Since $3\beta > \pi$ and $\beta<\pi$, we have $m_2 = 2$ and so $\beta=\pi/2$. Now the inequality $2\alpha + \beta > \pi$ implies that $\alpha > \pi/4$. By Lemma~\ref{lemma_bric}, there exist integers $m'_1 \ge 1$ and $m'_2 \ge 0$ such that 
$m'_1 \alpha + m'_2\pi/2 = \pi$. Since $m'_2>0$ leads to contradiction, we have $\alpha = \pi/m'_1$. The only solution satisfying $\pi/4<\alpha<\pi/2$ is $\alpha = \pi/3$.
\end{enumerate}


\paragraph*{Case (1): $\alpha + \beta = \pi$.}
Let $\T=(\alpha\varphi\psi)$ be a realizable spherical $(\alpha**)$-triangle
different from $\T_0$ (with some of the angles possibly equal), whose existence is guaranteed by
Lemma~\ref{lemma_dva_trojuhelniky_s_alphou}. The spherical area of $\T$ satisfies
$\Delta(\T) \ge 2\Delta(\T_0)=2\cdot(2\alpha+\beta-\pi)=2\alpha$. But this contradicts the
spherical triangle inequality $\Delta(\T)<2\alpha$ (Lemma~\ref{lemma_fact}(b)).


\paragraph*{Case (2): $\alpha = \pi/3, \beta = \pi/2$.}

The only nonnegative integer combinations of $\alpha$ and $\beta$ that sum up to $\pi$ are $3\alpha$ and $2\beta$. This implies that in every tiling of a spherical polygon $\mathcal{P}$ by $\T_0$, for every internal point $x$ of an edge of $\mathcal{P}$, all incident tiles have the same angle at $x$. This somewhat restricts the set of possible tilings. Further restriction is obtained using the area argument.

The spherical area of $\T_0$ is $\beta+2\alpha-\pi=\pi/6$. The $\alpha$-lune, which contains every $(\alpha**)$-triangle, has spherical area $2\alpha=2\pi/3$. It follows that every
$(\alpha**)$-triangle is composed of at most three tiles. 

When constructing a tiling of an $(\alpha**)$-triangle, we always start with a corner-filling tile $\T_0^1$ of the $\alpha$-lune and then try to place additional tiles.
There is only one way of attaching a second tile to $\T_0^1$, yielding the triangle of type $(\alpha,\alpha,2\alpha)$; see Figure~\ref{obr_ACD}, left. Similarly, there is a unique way of attaching the third tile, which yields the triangle of type $(\alpha,2\alpha,\pi/2)$; see Figure~\ref{obr_ACD}, right. These are
the only realizable $(\alpha**)$-triangles other than $\T_0$.

\begin{figure}
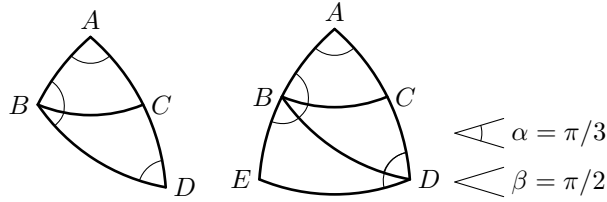

 \begin{center}
  \begin{tabular}{ccc}
     \includegraphics{obrazky/sphere1.18}  & \includegraphics{obrazky/sphere1.22}
     \includegraphics{obrazky/popisky.30}  \\
  \end{tabular}
   \caption[The $(\alpha\alpha2\alpha)$-triangle and the $(\alpha,2\alpha,\pi/2)$-triangle.]{The $(\alpha\alpha2\alpha)$-triangle and the $(\alpha,2\alpha,\pi/2)$-triangle composed of two and three $(\alpha\alpha\beta)$-tiles, respectively.}
 \label{obr_ACD}
 \end{center}
\end{figure}

Realizable $(\beta**)$-triangles can be composed from at most five tiles, as
their spherical area is smaller than $2\beta=\pi$. 
To construct a $(\beta**)$-triangle, we start with a corner tile $\T_0^1$ in the $\beta$-lune.
Since $\alpha$ does not divide $\beta$, the corner tile is corner-filling.
By Lemma~\ref{lemma_fact}(e) and by the symmetry of $\T_0$, the longest edge of $\T_0$ cannot be tiled with the shorter edges. This means that there is just one possible way of attaching another tile, $\T_0^2$, to $\T_0^1$; see Figure~\ref{fig_aab}, where $\T_0^1$ has vertices $ABC$ and $\T_0^2$ has vertices $BCF$.
The two tiles do not form a triangle, yet. Hence, there is at least one more tile  $\T_0^3$ adjacent to, say, $B$. The orientation of $\T_0^3$ where $\T_0^3$ shares the edge $BF$ with $\T_0^2$ gives an $(\alpha,2\alpha,\pi/2)$-triangle obtained earlier. The other orientation of $\T_0^3$, where the longest edge of $\T_0^3$ partially coincides with the edge $BF$, forces a fourth tile sharing the edge $CF$ with $\T_0^2$, forming an $(\alpha,2\alpha,\pi/2)$-triangle $ABE$ with $\T_0^1$ and $\T_0^2$. The remaining uncovered part of the edge $BE$ is shorter than all edges of $\T_0$, thus such a tiling cannot be completed to a $(\beta**)$-triangle. 

To extend the $(\alpha,2\alpha,\pi/2)$-triangle $ACD$, at least two more tiles are needed. There is precisely one way of attaching two more tiles, giving a $(\beta,2\alpha,2\alpha)$-triangle composed of five pieces; see Figure~\ref{fig_aab}. Therefore, the only $(\beta**)$-triangles are $(\alpha\alpha\beta)$, $(\alpha,2\alpha,\beta)$ and  $(\beta,2\alpha,2\alpha)$.
In particular, there is no $(\beta\beta*)$-triangle.

This implies that the Coxeter diagram of $S$ has exactly two vertex-disjoint 
$\beta$-edges. Let $uv$ and $xy$ be the two $\beta$-edges and let $w$ be the fifth vertex of $c(S)$. If both triangles $uvw$ and $xyw$ are $(\alpha\alpha\beta)$-triangles, then all edges incident with $w$ are $\alpha$-edges. No other edge can be an $\alpha$-edge, since the $(\alpha\alpha\alpha)$-triangle is not realizable. In other words, the triangles $uvw$ and $xyw$ are the only $(\alpha\alpha\beta)$-triangles; a contradiction. Hence at least three of the triangles induced by the vertices $u,v,x,y$ are $(\alpha\alpha\beta)$-triangles. But then all four of the triangles are $(\alpha\alpha\beta)$-triangles since the edges $ux,uy,vx,vy$ must be $\alpha$-edges. Every triangle containing one of these four $\alpha$-edges and the vertex $w$ must be of type $(\alpha,\alpha,2\alpha)$. Therefore, without loss of generality, the edges $uw$ and $vw$ are $\alpha$-edges, and the edges $xw$ and $yw$ are $2\alpha$-edges. A similar diagram is displayed in Figure~\ref{fig_no-2-indivisible}c), where instead of $2\alpha$-edges we have $\gamma$-edges. But in such 
a diagram, there are only three orbits of $(\alpha\alpha\beta)$-triangles; a contradiction.

\begin{figure}
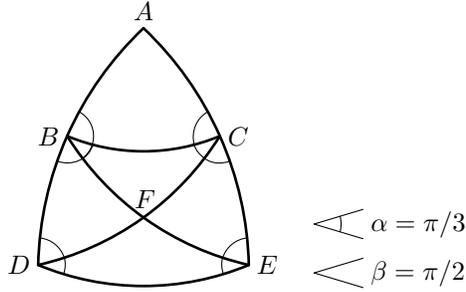

 \begin{center}
   \begin{tabular}{ccc}
  \includegraphics{obrazky/sphere1.11} & \includegraphics{obrazky/popisky.30} 
  \end{tabular}
  \caption[The $(\beta2\alpha2\alpha)$-triangle.]{The $(\beta2\alpha2\alpha)$-triangle composed of five
$(\alpha\alpha\beta)$-tiles with $\alpha=\pi/3, \beta=\pi/2$.}
 \label{fig_aab}
 \end{center}
\end{figure}

\subsubsection{\texorpdfstring
{Case (C): $\T_0=(\alpha\beta\gamma)$.}{The case (alpha, beta, gamma)}} 

First we obtain some more information about the Coxeter diagram of $S$.

\begin{lemma}\label{at-least-2} 
There is at least one realizable spherical $(\alpha**)$-triangle different from
$\T_0$.
The same is true for triangles of type $(\beta**)$ and $(\gamma**)$.
\end{lemma}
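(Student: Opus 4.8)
The plan is to mimic the structure of the analogous statements in Cases (A) and (B) (Lemma~\ref{lemma_dva_trojuhelniky_s_alphou}), namely to argue that if, say, $\T_0=(\alpha\beta\gamma)$ were the \emph{only} realizable $(\alpha**)$-triangle, then the $\alpha$-edges of $c(S)$ would be forced into a configuration so rigid that the copies of $T_0$ could not fall into four distinct orbits under the symmetry group, contradicting the $T_0$-richness of $c(S)$. First I would fix the angle $\varphi\in\{\alpha,\beta,\gamma\}$ and let $H_\varphi$ be the subgraph of $c(S)$ formed by the $\varphi$-edges. The key reduction: if $\T_0$ is the only realizable $(\varphi**)$-triangle, then no triangle of $c(S)$ other than a copy of $T_0$ may contain a $\varphi$-edge; since $T_0=(\alpha\beta\gamma)$ has exactly one edge of each type, this means every $\varphi$-edge lies in at most three copies of $T_0$, and, more importantly, that no two $\varphi$-edges share a vertex (otherwise the third edge of the triangle they span would give a realizable $(\varphi\varphi*)$- or $(\varphi**)$-triangle other than $\T_0$, since $\T_0$ has only one $\varphi$-edge). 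Hence $H_\varphi$ is a matching.

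Next I would count. Because $c(S)$ is $T_0$-rich, there are at least four copies of $T_0$ in $c(S)$, hence at least four $\varphi$-edges; but $K_5$ has only two disjoint edges in any matching of size $2$, and a matching in $K_5$ has at most two edges. This is already a contradiction: we cannot have four pairwise vertex-disjoint $\varphi$-edges in a graph on five vertices. Actually this shows something even cleaner — $H_\varphi$ has at most two edges, so there are at most two $\varphi$-edges, hence at most $2\cdot 3=6$ incidences of $\varphi$-edges with triangles, hence at most six copies of $T_0$ contain a $\varphi$-edge; but every copy of $T_0$ contains a $\varphi$-edge, so there are at most six copies of $T_0$. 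That alone does not contradict richness, so the disjointness count is the one to push: four copies of $T_0$ need four $\varphi$-edges, these must be pairwise disjoint, impossible in $K_5$. I would run this argument verbatim for $\varphi=\alpha$, then observe it applies identically with $\beta$ and with $\gamma$ in place of $\alpha$, since the only feature of $\alpha$ used is that it appears exactly once in $T_0=(\alpha\beta\gamma)$ — which is equally true of $\beta$ and of $\gamma$.

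The main obstacle is making sure the ``no two $\varphi$-edges share a vertex'' step is airtight: if two $\varphi$-edges met at a vertex, they would span a triangle $(\varphi\varphi*)$ in $c(S)$, whose corresponding spherical triangle is realizable (every triangle of $c(S)$ represents an edge-angle of $S$, hence is realizable because $\T_0$ is the unique indivisible edge-angle); this is a realizable $(\varphi**)$-triangle, and it is different from $\T_0=(\alpha\beta\gamma)$ because it has two $\varphi$-edges whereas $\T_0$ has only one. So no realizable $(\varphi**)$-triangle other than $\T_0$ would be a contradiction to the assumption, which is exactly what we want. One subtlety: the three incident triangles to a $\varphi$-edge in $K_5$ are pairwise distinct and genuinely use the two endpoints plus each of the other three vertices, so ``at most three copies of $T_0$ per $\varphi$-edge'' is correct; but as noted, I lean on the disjointness argument rather than this bound, so this subtlety is not even needed. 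I expect the whole proof to be two short paragraphs once written out, in the same spirit as the proofs of Lemma~\ref{lemma_dva_trojuhelniky_s_alphou} and Case~(A).
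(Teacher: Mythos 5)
Your reduction to ``$H_\varphi$ is a matching'' is sound and matches the first step of the paper's proof: any two $\varphi$-edges sharing a vertex would span a triangle of $c(S)$, which corresponds to an edge-angle of $S$ and is therefore realizable (all edge-angles other than $\T_0$ are tiled by $\T_0$), and a $(\varphi\varphi*)$-triangle cannot equal $\T_0=(\alpha\beta\gamma)$ since the latter has three distinct angles. But your concluding step is false: four copies of $T_0$ do \emph{not} require four distinct $\varphi$-edges. Each copy of $T_0$ contains exactly one $\varphi$-edge, but several copies may contain the \emph{same} one --- an edge of $K_5$ lies in three triangles, so two disjoint $\alpha$-edges can support up to six copies of $T_0$. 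You in fact observe exactly this (``at most $2\cdot 3=6$ incidences\dots at most six copies of $T_0$. That alone does not contradict richness'') and then revert to the incompatible claim ``four copies of $T_0$ need four $\varphi$-edges.'' There is no contradiction at this point: the configuration with two vertex-disjoint $\alpha$-edges and six copies of $T_0$ is combinatorially consistent, and it is precisely what the paper's Figure~\ref{fig_a-b-c} depicts.

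The missing idea is that richness is a statement about \emph{orbits}, not about the number of copies. The paper's proof continues as follows: with exactly two disjoint $\alpha$-edges (one $\alpha$-edge gives only three incident triangles, too few for four orbits), every remaining edge of $c(S)$ lies in a triangle together with an $\alpha$-edge, and that triangle must be a copy of $T_0=(\alpha\beta\gamma)$; hence every remaining edge is a $\beta$- or $\gamma$-edge. Up to isomorphism this leaves only the two labeled diagrams of Figure~\ref{fig_a-b-c}, and in each of them the symmetry group forces the six $(\alpha\beta\gamma)$-triangles into only three orbits, contradicting $T_0$-richness via Debrunner's lemma and Lemma~\ref{lemma_at-least-4-edges}. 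You need to carry out this enumeration and orbit count to close the argument; the purely numerical count you rely on cannot do it. (Your observation that the argument is symmetric in $\alpha,\beta,\gamma$ because each appears exactly once in $T_0$ is fine and is implicitly how the paper handles the $(\beta**)$ and $(\gamma**)$ cases.)
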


\begin{proof}
Assume for contradiction that $\T_0$ is the only realizable spherical triangle of
type $(\alpha**)$. It follows
that there are exactly two vertex-disjoint $\alpha$-edges in $c(S)$. Since every
other edge in $c(S)$ is adjacent to an $\alpha$-edge, all edges in $c(S)$ are $\beta$-edges or
$\gamma$-edges, and $c(S)$ is isomorphic to one of the two diagrams in Figure~\ref{fig_a-b-c}.
In both diagrams, the six $(\alpha\beta\gamma)$-triangles form only three orbits; a contradiction.
\end{proof}

\begin{figure}
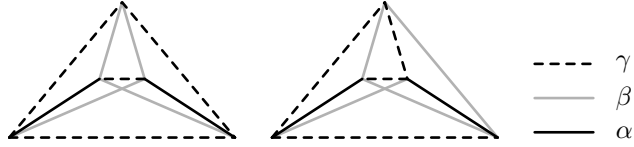

 \begin{center}
  \begin{tabular}{ccc}
   \includegraphics[scale=1]{obrazky/coxeter.29} & \includegraphics[scale=1]{obrazky/coxeter.30} &
\includegraphics{obrazky/popisky.9}\\
  \end{tabular}
 \end{center}
\caption{Two possible Coxeter diagrams if $\T_0$ is the only triangle of type
$(\alpha**)$.}
\label{fig_a-b-c}
\end{figure}

By $P_k$ we denote a path with $k$ vertices and by $P_k+P_l$ a disjoint union of paths.

\begin{lemma}\label{lem_partial_classification_cox_2_3_alpha_edges}
The Coxeter diagram of $S$ has two or three $\alpha$-edges and they form a subgraph isomorphic to $P_2+P_2$ or $P_2+P_3$.
\end{lemma}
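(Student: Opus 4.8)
The plan is to combine three inputs: (i) the fact that $c(S)$ is $T_0$-rich, where $T_0=(\alpha\beta\gamma)$ with $\alpha<\beta<\gamma$; (ii) Lemma~\ref{at-least-2}, which guarantees that besides $\T_0$ there is at least one realizable $(\alpha**)$-triangle; and (iii) a careful count of orbits of $(\alpha\beta\gamma)$-triangles in small graphs on five vertices. First I would argue that the subgraph $H_\alpha$ of $c(S)$ formed by the $\alpha$-edges cannot be too small or too large. By Lemma~\ref{lemma_at-least-4-edges} (equivalently, $T_0$-richness forces at least four orbits of copies of $T_0$, hence a fair number of copies of $T_0$), $c(S)$ contains at least four $(\alpha\beta\gamma)$-triangles and these split into at least four orbits — in particular the trivial symmetry group case, so the plain statement is that there are at least four $(\alpha\beta\gamma)$-triangles and no symmetry identifies them. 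Each such triangle contains exactly one $\alpha$-edge, so $c(S)$ has at least four triangles containing exactly one $\alpha$-edge. If there were only one $\alpha$-edge, it would lie in exactly three triangles, too few; if there were exactly two $\alpha$-edges, they either share a vertex (then one of the $\le 3+3$ triangles through them is the common triangle, and moreover a symmetry swapping them would typically exist — but we must be careful, so I would instead directly bound: two $\alpha$-edges sharing a vertex give at most $3+3-1=5$ triangles touching an $\alpha$-edge, but one of these contains two $\alpha$-edges, hence at most $4$ with exactly one $\alpha$-edge, and then Lemma~\ref{at-least-2} is what finishes it). The clean dichotomy I want is: $H_\alpha$ has either two components that are single edges (i.e.\ $H_\alpha\cong P_2+P_2$, using all five vertices), or a path on three vertices plus a disjoint edge ($H_\alpha\cong P_2+P_3$), and these are the only possibilities.

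The core of the argument is to show that $H_\alpha$ has \emph{at least two components} and \emph{no triangle}, and then enumerate. No triangle: if three $\alpha$-edges formed a triangle in $c(S)$, that triangle would be $(\alpha\alpha\alpha)$, but $\T_0$ is the only indivisible triangle and an $(\alpha\alpha\alpha)$-triangle is realizable only if... actually the cleanest route is area/angle: an $(\alpha\alpha\alpha)$-triangle would be a realizable $(\alpha**)$-triangle, and I would check it cannot coexist with the constraints already derived in Case (C) (in particular it would have spherical area $3\alpha-\pi<2\alpha$, which is allowed, so I instead rule it out combinatorially — a $K_3$ of $\alpha$-edges plus the requirement of four vertex-disjoint-ish copies of $T_0$ forces a symmetry). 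At most $P_3$ in each component (no $P_4$, no star $K_{1,3}$ or larger): a vertex of $H_\alpha$ of degree $\ge 3$ would put three $\alpha$-edges at one vertex, and the triangles among them would be $(\alpha\alpha*)$-triangles; combined with $T_0=(\alpha\beta\gamma)$ having all angles distinct, this is impossible unless those $(\alpha\alpha*)$-triangles are realizable, and each such triangle would then need $2\alpha$ as an angle-sum constituent — I would use Lemma~\ref{lemma_bric} and Lemma~\ref{lemma_fact}(a) to see $2\alpha+\alpha>\pi$ is compatible only in degenerate ways, or more simply count orbits to reach a contradiction with $T_0$-richness. Connectivity: if $H_\alpha$ were connected on $\ge 3$ vertices it would be $P_3$ on exactly three vertices (by the degree bound), leaving the remaining two vertices joined by a non-$\alpha$-edge; but then there is no room for four non-equivalent $(\alpha\beta\gamma)$-triangles — I'd check each of the few remaining configurations ($P_3$, $P_3$ plus isolated vertices, $P_2$, etc.) produces too few orbits, eliminating everything except $P_2+P_2$ and $P_2+P_3$.

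Concretely, the steps in order: (1) From $T_0$-richness deduce $c(S)$ has $\ge 4$ copies of $T_0$, hence $\ge 4$ triangles each with exactly one $\alpha$-edge; conclude $H_\alpha$ has $\ge 2$ edges, and rule out exactly two $\alpha$-edges sharing a vertex and (combined with Lemma~\ref{at-least-2}, which says $\T_0$ is not the only realizable $(\alpha**)$-triangle, hence there must be two $\alpha$-edges sharing a vertex \emph{somewhere} OR $\ge 3$ $\alpha$-edges) — reconcile these to get exactly the menu: two or three $\alpha$-edges forming $P_2+P_2$ or $P_2+P_3$. (2) Rule out $K_3\subseteq H_\alpha$ (would be an $(\alpha\alpha\alpha)$, handled by a symmetry/orbit count or by the realizability restrictions). (3) Rule out any vertex of degree $\ge 3$ in $H_\alpha$ (the induced $(\alpha\alpha*)$-triangles contradict $\alpha<\beta<\gamma$ together with orbit counting). (4) Rule out $H_\alpha$ connected on $\ge 3$ vertices being $P_3$-on-three-vertices-only-vertices, and rule out $H_\alpha$ having $\ge 4$ edges (by the degree and triangle bounds a forest with max degree $2$ and no triangle on $5$ vertices with $\ge 4$ edges is $P_2+P_3$ with all vertices used — wait, that has $3$ edges; $P_5$ has $4$ edges but then the three internal triangles... actually $P_5$ has no triangles, so I must kill $P_5$ and $P_2+P_4\supseteq$impossible and $P_3+P_3$ impossible on $5$ vertices — $P_5$ is the real case to eliminate, again by showing the six or more $(\alpha\beta\gamma)$-triangles collapse into too few orbits or force a symmetry of $c(S)$). (5) Conclude $H_\alpha\in\{P_2+P_2,\ P_2+P_3\}$ with two or three $\alpha$-edges respectively.

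\textbf{Main obstacle.} The delicate point is step (1)/(4): translating "$T_0$-rich" into a usable numeric bound and then, for each candidate shape of $H_\alpha$ on five vertices, correctly counting the orbits of $(\alpha\beta\gamma)$-triangles under the symmetry group of the \emph{whole} diagram (which depends on how the remaining $\beta$- and $\gamma$-edges are distributed, and those are constrained by which $(\alpha**)$- and $(\beta**)$-triangles are realizable). In particular, eliminating $H_\alpha=P_5$ and $H_\alpha=P_3$ (connected, three vertices) requires showing that \emph{every} admissible placement of the non-$\alpha$-edges yields at most three orbits of $(\alpha\beta\gamma)$-triangles or a nontrivial symmetry; this is a small but fiddly case analysis, and it is exactly the kind of bookkeeping that Lemma~\ref{l_pocet_orbit}, Debrunner's lemma, and Lemma~\ref{at-least-2} are designed to streamline. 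Once the shape of $H_\alpha$ is pinned down to $P_2+P_2$ or $P_2+P_3$, the remaining labelings of $c(S)$ will be finite in number and can be checked individually (presumably in the next subsection against Fiedler's theorem).
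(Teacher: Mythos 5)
Your overall strategy --- classify the subgraph $H_\alpha$ by combining the four-orbit consequence of $T_0$-richness with realizability constraints and symmetry/orbit collapses --- is the same as the paper's, but several of your individual case eliminations rest on claims that are either wrong or do not close the case. First, the two-adjacent-$\alpha$-edges case: your count shows there are \emph{exactly} four candidate triangles with one $\alpha$-edge ($uvx,uvy,uwx,uwy$ for $E_\alpha=\{uv,uw\}$), which is not a contradiction, and Lemma~\ref{at-least-2} does not finish it. The paper's argument is that all four candidates are then forced to be $(\alpha\beta\gamma)$-triangles, whence $vx$ and $wx$ must have the same type (both complement the type of $ux$ inside $\{\beta,\gamma\}$), and likewise $vy,wy$; so the transposition $(v\,w)$ is a symmetry of $c(S)$ and the four triangles collapse into at most two orbits, contradicting richness. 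You gesture at this symmetry but then retreat to the insufficient count. Second, your realizability inputs are off: the reason $(\alpha\alpha\alpha)$, $(\alpha\alpha\beta)$ and $(\alpha\alpha\gamma)$ cannot appear in $c(S)$ is that their spherical areas ($3\alpha-\pi$, $2\alpha+\beta-\pi$, $2\alpha+\gamma-\pi$) are all strictly less than $\Delta(\T_0)=\alpha+\beta+\gamma-\pi$, so they cannot be tiled by copies of $\T_0$; comparing $3\alpha-\pi$ with the lune bound $2\alpha$, as you do, proves nothing. Moreover your blanket claim that $(\alpha\alpha*)$-triangles ``contradict $\alpha<\beta<\gamma$'' is false --- triangles such as $(\alpha,\alpha,2\beta)$ are realizable and occur in the diagrams analyzed later --- so your degree-$\ge 3$ exclusion does not work as stated. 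The correct argument for $K_{1,3}$ (and for $P_4$, using the area fact above) is that every $(\alpha\beta\gamma)$-triangle is then forced to contain the vertex outside the star/path, leaving at most three such triangles.

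Third, your enumeration of the $\ge 4$-edge configurations is incomplete: a triangle-free graph of maximum degree $2$ on five vertices need not be a forest, and $C_4$ (and $C_5$, via a contained $P_5$) must be handled. The paper kills the fork and the $P_5$ cases by observing that each forces all but two or three edges to be ineligible for types $\beta,\gamma$, while richness requires at least two $\beta$-edges and two $\gamma$-edges (four triangles, each edge in at most three of them); and it kills $C_4$ by a separate symmetry argument (the group generated by the two diagonal transpositions collapses all $(\alpha\beta\gamma)$-triangles into one orbit). None of these mechanisms appears in your write-up, which instead defers to ``too few orbits or a forced symmetry'' without exhibiting either. So while the skeleton of your plan matches the paper, the proof as proposed has genuine gaps in the adjacent-edges case, in the realizability facts it relies on, and in the completeness of the case enumeration.
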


\begin{proof} 
Let $H=c(S)$ be the Coxeter diagram of $S$. Let $V(H)=\{u,v,w,x,y\}$ and let $H_{\alpha}=(V(G),E_{\alpha})$ be the subgraph of $H$ formed by the $\alpha$-edges. Since $H$ is $(\alpha\beta\gamma)$-rich, it has at least four $(\alpha\beta\gamma)$-triangles, and hence it has at least two $\alpha$-edges, at least two $\beta$-edges and at least two $\gamma$-edges.

Suppose that $|E_{\alpha}|= 2$. If the two $\alpha$-edges are adjacent, say, $E_{\alpha}=\{uv,uw\}$, then all the triangles $uvx,uvy,uwx,uwy$ are of type $(\alpha\beta\gamma)$. In particular, the edges $vx$ and $wx$ are of the same type, either $\beta$ or $\gamma$, and the edges $vy$ and $wy$ are of the same type as well. Therefore, $H$ has a symmetry switching $v$ with $w$, and so there are at most two orbits of $(\alpha\beta\gamma)$-triangles; a contradiction. It follows that $H_{\alpha}$ is a matching.

Suppose that $|E_{\alpha}|= 3$. If $H_{\alpha}$ is isomorphic to the star $K_{1,3}$, say, $E_{\alpha}=\{xu,xv,xw\}$, then every $(\alpha\beta\gamma)$-triangle must contain the vertex $y$, so there can be at most three such triangles. If $H_{\alpha}$ is isomorphic to the path $P_4$, say, $E_{\alpha}=\{xu,uv,vw\}$, then the edges $xv$ and $uw$ cannot have type $\beta$ or $\gamma$, since the spherical triangles of type $(\alpha\alpha\beta)$ and $(\alpha\alpha\gamma)$ have smaller area than $\T_0$ and so they are not realizable. This again implies that every $(\alpha\beta\gamma)$-triangle must contain the vertex $y$ and so there are at most three of them. Also, $H_{\alpha}$ cannot form a triangle, since the spherical triangle of type $(\alpha\alpha\alpha)$ is not realizable. This leaves only one option: $H_{\alpha}$ forms a subgraph isomorphic to $P_2+P_3$.

Suppose that $|E_{\alpha}|\ge 4$. If $H_{\alpha}$ contains a star $K_{1,4}$, then no other edge can be of type $\beta$ or $\gamma$. If $H_{\alpha}$ contains a fork, say, $E_{\alpha}\supseteq\{uv,vw,wx,wy\}$, then only two edges, $ux$ and $uy$, can be of type $\beta$ or $\gamma$. If $H_{\alpha}$ contains a path $P_5$, say, $E_{\alpha}\supseteq\{uv,vw,wx,xy\}$, then only three edges, $ux,uy$ and $vy$, can be of type $\beta$ or $\gamma$. Since $H_{\alpha}$ cannot contain triangles, the only remaining possibility is that $H_{\alpha}$ is isomorphic to the $4$-cycle, say, $E_{\alpha}=\{uv,vx,xy,yu\}$. All edges of type $\beta$ or $\gamma$ must be incident with $w$, hence $uw$ and $xw$ are of the same type, and also $vw$ and $yw$ are of the same type. Regardless of the type of the diagonals $ux$ and $vy$, this diagram has a symmetry group generated by the transpositions $(u,x)$ and $(v,y)$, and so the $(\alpha\beta\gamma)$-triangles form just one orbit; a contradiction.
\end{proof}

In the following lemma we obtain some partial information about the angles of $\mathcal{T}_0$ and identify two basic cases.

\begin{lemma}\label{gamma=pi/2}
 If $\T_0 = (\alpha\beta\gamma)$ then $\gamma = \pi/2$. Furthermore,
\begin{enumerate} 
\item[a)]$\alpha + 2\beta = \pi$, or 
\item[b)]$\beta = \pi/3$ and $\alpha > \pi/6$.
\end{enumerate}
\end{lemma}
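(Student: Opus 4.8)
The plan is to analyze the tiling of the three lunes $\mathcal{L}_\alpha, \mathcal{L}_\beta, \mathcal{L}_\gamma$ guaranteed by Lemma~\ref{lemma_o_vyskladani_cocky}, together with the linear relations from Lemma~\ref{lemma_bric}, to pin down $\gamma=\pi/2$ and then to split into the two subcases. First I would record what Lemma~\ref{lemma_bric} gives for $i=3$: there are nonnegative integers $m_1,m_2,m_3$ with $m_3>0$ and $m_1\alpha+m_2\beta+m_3\gamma=\pi$. Since $\alpha+\beta+\gamma>\pi$ by Lemma~\ref{lemma_fact}(a) and $\alpha<\beta<\gamma$, the only possibilities for this relation are very limited: if $m_3\ge 2$ then $2\gamma\le\pi$, so $\gamma\le\pi/2$, but combined with $\alpha+\beta+\gamma>\pi$ and $\alpha<\beta<\gamma\le\pi/2$ this is already delicate; if $m_3=1$, then $m_1\alpha+m_2\beta=\pi-\gamma<\alpha+\beta$, forcing $m_1,m_2\in\{0,1\}$ and hence $\gamma=\pi$, $\gamma=\pi-\alpha$, $\gamma=\pi-\beta$, or $\gamma=\pi-\alpha-\beta$; the first is impossible by Lemma~\ref{lemma_fact}(f) (applied to angles, or directly since $\gamma<\pi$), and the last contradicts $\alpha+\beta+\gamma>\pi$. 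So $\gamma=\pi-\alpha$ or $\gamma=\pi-\beta$, or we are in the case $m_3\ge 2$.

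Next I would extract the analogous relation for $i=2$: there are nonnegative integers with $m_2'>0$ and $m_1'\alpha+m_2'\beta+m_3'\gamma=\pi$, and similarly rule out most configurations using $2\beta+\alpha>\pi$ (which follows from $\alpha+\beta+\gamma>\pi$ together with $\gamma>\beta$ only if... actually one must be careful: $\alpha+\beta+\gamma>\pi$ and $\gamma>\beta$ give $\alpha+2\beta>\pi$ is \emph{not} automatic, so instead I use $\alpha+\beta+\gamma>\pi$ directly). The key leverage is combining the $i=2$ and $i=3$ relations. For instance, if $\gamma=\pi-\beta$, then substituting into the $i=2$ relation $m_1'\alpha+m_2'\beta+m_3'(\pi-\beta)=\pi$ gives $m_1'\alpha+(m_2'-m_3')\beta=(1-m_3')\pi$, and since the left side is smaller than $\pi$ in absolute value for small coefficients, this forces $m_3'\in\{0,1\}$ and then a short case analysis; one of the surviving subcases should yield $\gamma=\pi/2$ after all (e.g. if also $2\gamma$ must appear somewhere). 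The cleanest route is probably: show first that $\gamma\le\pi/2$ is impossible unless $\gamma=\pi/2$ exactly, by an area argument on $\mathcal{L}_\gamma$ (its area is $2\gamma\le\pi$ while $\Delta(\T_0)=\alpha+\beta+\gamma-\pi$, and if $\gamma<\pi/2$ the number of tiles $2\gamma/\Delta(\T_0)$ being an integer $\ge 2$ together with $\Delta(\T_0)<2\alpha$ squeezes things), and separately show $\gamma>\pi/2$ leads to $\gamma=\pi-\alpha$ or $\gamma=\pi-\beta$, both of which are then eliminated or forced into the stated subcases by re-running Lemma~\ref{lemma_bric} for $i=1$.

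Once $\gamma=\pi/2$ is established, the two subcases should fall out as follows. From Lemma~\ref{lemma_bric} with $i=1$, there are nonnegative integers $m_1>0$, $m_2,m_3$ with $m_1\alpha+m_2\beta+m_3\pi/2=\pi$; from $i=2$, integers $n_2>0$ with $n_1\alpha+n_2\beta+n_3\pi/2=\pi$. Using $\alpha<\beta<\pi/2$ and $\alpha+\beta>\pi/2$ (from $\alpha+\beta+\gamma>\pi$), the admissible relations are short to enumerate. In case~a) the relation $\alpha+2\beta=\pi$ arises as the $i=2$ relation with $n_1=1, n_2=2, n_3=0$ (the only way to write $\pi$ with $\beta$ appearing and total small); in case~b) one gets $\beta=\pi/3$ (i.e. $3\beta=\pi$, the $i=2$ relation with $n_2=3$) and then the $i=1$ relation plus $\alpha<\beta=\pi/3$ forces $\alpha>\pi/6$ (since otherwise $\alpha$ could be written as $\pi/n$ with $n\ge 6$, but the corner-filling/area constraints from Lemma~\ref{lemma_fact}(b) on $\mathcal{L}_\alpha$—area $2\alpha$, tiles of area $\alpha+\beta+\gamma-\pi = \alpha-\pi/6$—rule out $\alpha\le\pi/6$).

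\textbf{Main obstacle.} I expect the hardest part to be the exhaustive-but-clean elimination in the proof that $\gamma=\pi/2$: there are several near-misses (relations like $2\gamma+\alpha=\pi$ or $\gamma=\pi-\alpha$) that are not immediately contradictory from the linear relations alone and require bringing in the geometric area/corner-filling information from Observation~\ref{obs_corner} and Lemma~\ref{lemma_fact}(b),(d) about the lune tilings. Organizing these so that each branch closes with a one-line area or shortest-edge argument, rather than a sprawling sub-case tree, will be the delicate bookkeeping; the cleanest formulation likely treats $\mathcal{L}_\beta$ and $\mathcal{L}_\gamma$ in parallel, using that the corner tile of $\mathcal{L}_\beta$ is corner-filling (since $\alpha\nmid\beta$ in the relevant range, by the relations just derived) to read off an exact angle equation at the non-corner vertices.
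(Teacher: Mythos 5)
Your proposal assembles the right ingredients (Lemma~\ref{lemma_o_vyskladani_cocky}, Observation~\ref{obs_corner}, Lemma~\ref{lemma_bric}, the area and triangle inequalities), but it does not actually close either half of the argument, and the route you lean on most heavily --- enumerating the linear relations supplied by Lemma~\ref{lemma_bric} --- provably cannot yield the dichotomy a)/b). Concretely: the spherical triangle with angles $(\alpha,\beta,\gamma)=(0.3\pi,\,0.4\pi,\,0.5\pi)$ exists, satisfies every conclusion of Lemma~\ref{lemma_bric} (take $2\alpha+\beta=\pi$ for $i=1,2$ and $2\gamma=\pi$ for $i=3$), has $\gamma=\pi/2$, and passes all the area-divisibility checks on the three lunes, yet it satisfies neither $\alpha+2\beta=\pi$ nor $\beta=\pi/3$. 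So your parenthetical claim that $\alpha+2\beta=\pi$ is ``the only way to write $\pi$ with $\beta$ appearing and total small'' is false ($n_1\alpha+\beta=\pi$ with $n_1\ge 2$ survives all your constraints), and no cross-referencing of the $i=1,2,3$ relations can eliminate it. There is also an error earlier in your case analysis: in the $m_3=1$ branch, the inequality $m_1\alpha<\alpha+\beta$ does not force $m_1\le 1$ (since $\alpha<\beta$, already $m_1=2$ is consistent, and $m_1$ can be large when $\alpha$ is small), so the reduction to $\gamma=\pi-\alpha$ or $\gamma=\pi-\beta$ is unjustified. (By contrast, the subcase $m_3\ge 2$ that you call ``delicate'' is immediate: $m_3\ge3$ contradicts $3\gamma>\pi$, and $m_3=2$ with $(m_1,m_2)\neq(0,0)$ gives $\pi\ge\alpha+2\gamma>\alpha+\beta+\gamma>\pi$, so it forces $\gamma=\pi/2$ outright.)

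The missing idea --- and the actual engine of the paper's proof --- is a local analysis of the tiling of the $\alpha$-lune. By Lemma~\ref{at-least-2} that lune needs at least three tiles; a corner-filling tile $ABC$ (angles $\alpha,\beta,\gamma$ at $A,B,C$) shares its shortest edge $BC$ with exactly one neighbour $BCD$ by Observation~\ref{obs_corner}, and one examines the two possible orientations of that neighbour. The ``parallelogram'' orientation gives angle $\beta+\gamma$ at both $B$ and $C$; since two tiles cannot fill the lune, $\beta+\gamma<\pi$, and the deficit $\pi-\beta-\gamma<\alpha$ at $B$ and $C$ cannot be filled by any further tile, a contradiction. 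Hence the neighbour is the mirror image, which places angle $2\gamma$ at $C$ --- forcing $2\gamma=\pi$ because $\alpha+2\gamma>\alpha+\beta+\gamma>\pi$ leaves no room for a third tile there --- and, crucially, places \emph{two} angles $\beta$ at $B$, so the angle equation at $B$ reads $n_1\alpha+n_2\beta=\pi$ with $n_2\ge 2$ (no $\gamma$ fits at $B$ since $2\beta+\gamma>\pi$). It is exactly the extra information $n_2\ge 2$, which is not available from Lemma~\ref{lemma_bric}, that kills the $2\alpha+\beta=\pi$-type solutions and leaves only $\alpha+2\beta=\pi$ or $3\beta=\pi$ (with $\alpha>\pi/6$ then following from $\alpha+\beta>\pi/2$). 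Without this orientation step your proof cannot be completed.
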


\begin{proof}
By Lemma~\ref{at-least-2}, the spherical area of the $\alpha$-lune $\mathcal{L}_{\alpha}$ is greater than $2\Delta(\T_0)$. By Lemma~\ref{lemma_o_vyskladani_cocky}, there is a tiling of the lune $\mathcal{L}_{\alpha}$ by at least three copies of $\T_0$.    

Let $\T_0^1$ be a corner-filling tile with vertices $A,B,C$ incident with angles $\alpha, \beta, \gamma$, respectively. In particular, $A$ is a vertex of $\mathcal{L}_{\alpha}$.
By Observation~\ref{obs_corner}, $\T_0^1$ is adjacent to a tile $\T_0^2$ with vertices $B,C,D$, which can be placed in two possible orientations; see Figure~\ref{obr_abc_2dlazdice}.
If $\T_0^2$ has the same orientation as $\T_0^1$, the quadrilateral $ABDC$ is a spherical parallelogram with angles $\beta+\gamma$ at vertices $B$ and $C$. By Lemma~\ref{at-least-2}, two copies of $\T_0$ cannot tile $\mathcal{L}_{\alpha}$ and so $\beta+\gamma<\pi$. Since $\alpha + \beta + \gamma > \pi$ by Lemma~\ref{lemma_fact}(a), the parallelogram $ABDC$ cannot be completed to a tiling of $\mathcal{L}_{\alpha}$. Therefore $\T_0^2$ and $\T_0^1$ have opposite orientations. 

\begin{figure}
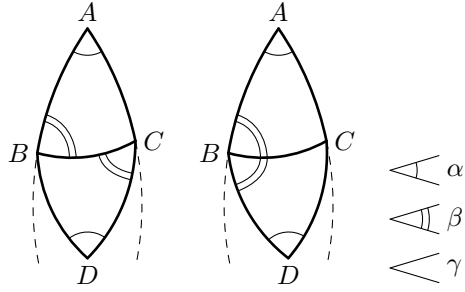

 \begin{center}
  \begin{tabular}{ccc}
   \includegraphics{obrazky/sphere1.3} & \includegraphics{obrazky/sphere1.19} & \includegraphics{obrazky/popisky.20}
  \end{tabular}
 \end{center}
\caption{Two possibilities for the first two tiles in a tiling of $\mathcal{L}_{\alpha}$ by $\T_0$.}
\label{obr_abc_2dlazdice}
\end{figure}

Since $\alpha+2\gamma>\alpha+\beta+\gamma>\pi$, no other tile can be incident to $C$ and so $\gamma = \pi/2$.
That is, the two tiles $\T_0^1$ and $\T_0^2$ form a triangle $ABD$. The angles of the tiles incident to $B$ include two angles $\beta$, and together they sum up to $\pi$. No tile can have angle $\gamma$ at $B$ since $2\beta+\gamma>\pi$.
Therefore, there exist non-negative integers $n_1$ and $n_2\ge 2$ such that 
$n_1\alpha + n_2\beta = \pi$. 
Since $\gamma = \pi/2$, we have $\alpha + \beta > \pi/2$, implying $n_1\le 1$ and $\beta > \pi/4$. There are 
only two cases: 
\begin{enumerate}
 \item[a)] $n_1 = 1$: then $n_2=2$ and thus $\alpha + 2\beta = \pi$.
 \item[b)] $n_1 = 0$: then $\beta = \pi/3$ and consequently $\alpha > \pi/6$.
\end{enumerate}
This concludes the proof.
\end{proof}

Now we deal separately with the two cases from Lemma~\ref{gamma=pi/2}.

\paragraph{Case a) $\alpha + 2\beta = \pi$.}

\begin{figure}
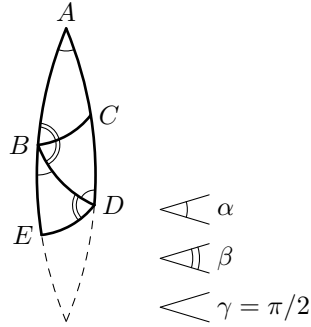

 \begin{center}
  \begin{tabular}{cc} 
   \includegraphics{obrazky/sphere-troj.0} & \includegraphics{obrazky/popisky.170}\\
  \end{tabular}
 \end{center}
\caption{Tiling of spherical triangles for $\gamma = \pi/2$ and $\alpha + 2\beta = \pi$.}
\label{fig_2beta}
\end{figure}

By Observation~\ref{obs_corner} and by the fact that $\beta+\gamma<\pi<\alpha+\beta+\gamma$, the tiling of every realizable $(\alpha**)$-triangle other than $\T_0$ contains two tiles $ABC$ and $BCD$ with opposite orientations as in Figure~\ref{fig_2beta},  forming a triangle of type $(\alpha,\alpha,2\beta)$. By the triangle inequality (Lemma~\ref{lemma_fact}(e)), the edge $BD$ cannot be subdivided by the edge of $\T_0$ opposite to $\beta$, thus there is just one possible way of placing a third tile: the triangle $BDE$ in Figure~\ref{fig_2beta}. These three tiles form a triangle of type $(\alpha,\alpha+\beta,\gamma)$. The fourth tile would fill the whole $\alpha$-lune, therefore the only realizable  $(\alpha**)$-triangles are  $(\alpha\beta\gamma)$, $(\alpha,\alpha,2\beta)$ and $(\alpha,\alpha+\beta,\gamma)$.

\begin{lemma}\label{lem_abc_aa2b_aa+bc}
Every $(\alpha\beta\gamma)$-rich Coxeter diagram with five vertices where all $(\alpha**)$-triangles are of type $(\alpha\beta\gamma)$, $(\alpha,\alpha,2\beta)$ or $(\alpha,\alpha+\beta,\gamma)$, is isomorphic to one of the five diagrams in Figure~\ref{fig_alfa+2beta=pi}.
\end{lemma}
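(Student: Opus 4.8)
The plan is to enumerate, up to isomorphism, all Coxeter diagrams on five vertices that are $(\alpha\beta\gamma)$-rich (i.e.\ have at least four $(\alpha\beta\gamma)$-triangles in distinct orbits) and in which every triangle containing an $\alpha$-edge has one of the three allowed types. By Lemma~\ref{lem_partial_classification_cox_2_3_alpha_edges}, the $\alpha$-edges form either $P_2+P_2$ or $P_2+P_3$, so I would split into these two cases. In each case I fix the $\alpha$-edges in $c(S)$ and then determine the possible types of the remaining edges, using the constraint that any triangle using an $\alpha$-edge must be $(\alpha\beta\gamma)$, $(\alpha\alpha 2\beta)$, or $(\alpha,\alpha+\beta,\gamma)$ — in particular a triangle spanned by two $\alpha$-edges must close up with a $2\beta$-edge, and a triangle with exactly one $\alpha$-edge must have its other two edges labeled $\{\beta,\gamma\}$.

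First I would handle the $P_2+P_3$ case: say $E_\alpha=\{uv,\,wx,xy\}$ (the path $w\!-\!x\!-\!y$ plus the isolated edge $uv$). The triangle $wxy$ has two $\alpha$-edges, so $wy$ must be a $2\beta$-edge. Each of the remaining edges $uw,ux,uy,vw,vx,vy$ lies in a triangle with exactly one $\alpha$-edge, which forces strong pairing constraints: for instance $uv$ together with $w$ gives the triangle $uvw$, so $\{uw,vw\}=\{\beta,\gamma\}$; similarly $\{ux,vx\}=\{\beta,\gamma\}$ and $\{uy,vy\}=\{\beta,\gamma\}$. The edges $wx$ and $xy$ together with $u$ (resp.\ $v$) give triangles $uwx$, $uxy$ with one $\alpha$-edge each, so $ux$ must simultaneously pair with $uw$ and with $uy$ to form $\{\beta,\gamma\}$, forcing $uw=uy\ne ux$; and likewise $vw=vy\ne vx$. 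Combining these, the only consistent labelings are a small finite list; I expect a handful of them to be ruled out because they create an extra symmetry collapsing the $(\alpha\beta\gamma)$-triangles into fewer than four orbits, or because they fail to produce four such triangles at all, leaving exactly the diagrams shown in Figure~\ref{fig_alfa+2beta=pi}. The $P_2+P_2$ case, $E_\alpha=\{uv,xy\}$ with $w$ the fifth vertex, is similar but shorter: the triangles $uvw$ and $xyw$ force $\{uw,vw\}=\{\beta,\gamma\}=\{xw,yw\}$, and the four ``cross'' edges $ux,uy,vx,vy$ are each in a triangle with exactly one $\alpha$-edge (e.g.\ $uvx$ uses $uv$, so $ux$ and $vx$ pair as $\{\beta,\gamma\}$, and $uxy$ uses $xy$, so $ux$ pairs with $uy$ too), which again pins down the cross edges up to a few choices; then I discard those with too much symmetry or too few $(\alpha\beta\gamma)$-triangles.

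The main obstacle I anticipate is the bookkeeping: making sure the enumeration of edge-labelings is genuinely exhaustive, that the isomorphism reductions (which cross-edge is $\beta$ versus $\gamma$, etc.) are done carefully, and that the ``rich'' condition is checked correctly — i.e.\ actually computing the symmetry group of each candidate diagram via Debrunner's lemma and counting orbits of its $(\alpha\beta\gamma)$-triangles, since a diagram can have four such triangles but only two or three orbits. A secondary subtlety is that $\alpha$, $\beta$, $\gamma$, $2\beta$, $\alpha+\beta$ must all be treated as formally distinct edge-types unless an accidental coincidence is forced (and one must remember $2\beta<\pi$ and $\alpha+\beta<\pi$ from $\gamma=\pi/2$ and the triangle inequalities, so e.g.\ a $2\beta$-edge cannot coincide with the $\gamma$-edge). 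Once the case analysis is organized as above, each branch terminates quickly, and the surviving diagrams are exactly the five in Figure~\ref{fig_alfa+2beta=pi}.
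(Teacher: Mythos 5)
Your overall strategy is the same as the paper's: split on the shape of $H_\alpha$ via Lemma~\ref{lem_partial_classification_cox_2_3_alpha_edges}, propagate the admissible triangle types to constrain the remaining edge labels, and prune by counting orbits of $(\alpha\beta\gamma)$-triangles. However, there is a concrete error in the key propagation step. You assert that ``a triangle with exactly one $\alpha$-edge must have its other two edges labeled $\{\beta,\gamma\}$.'' This ignores the third admissible type $(\alpha,\alpha+\beta,\gamma)$: a triangle with exactly one $\alpha$-edge can also have its other two edges labeled $\{\alpha+\beta,\gamma\}$. The correct conclusion (and the one the paper uses) is only that every such triangle contains \emph{exactly one} $\gamma$-edge, while its remaining edge is of type $\beta$ \emph{or} $\alpha+\beta$. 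Since in both cases ($E_\alpha\cong P_2+P_2$ and $E_\alpha\cong P_2+P_3$) every non-$\alpha$-edge other than the forced $2\beta$-edge lies in a triangle with exactly one $\alpha$-edge, your constraint forbids $\alpha+\beta$-edges altogether. But four of the five diagrams in Figure~\ref{fig_alfa+2beta=pi} contain exactly one $\alpha+\beta$-edge, so your enumeration would miss them; worse, in the matching case the paper shows that the all-$\beta$ labelings have a symmetry collapsing the $(\alpha\beta\gamma)$-triangles into at most three orbits, so under your constraint that entire branch would (incorrectly) yield nothing, and the $\alpha+\beta$-edge is precisely what is needed to break the symmetry and achieve richness.

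A secondary consequence is that your derived deductions such as ``$uw=uy\neq ux$'' are also too strong: with the corrected constraint one only gets that either $ux$ is the $\gamma$-edge of both triangles $uwx$ and $uxy$, or else $uw$ and $uy$ are both $\gamma$-edges and $ux\in\{\beta,\alpha+\beta\}$. Once you replace the pairing rule ``$\{\beta,\gamma\}$'' by ``exactly one $\gamma$, the other $\beta$ or $\alpha+\beta$'' and rerun the case analysis (first placing the $\gamma$-edges, then deciding which of the remaining edges is the single $\alpha+\beta$-edge needed to kill the symmetry), the argument does close and recovers exactly the five diagrams, which is how the paper proceeds.
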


\begin{proof}
Let $H$ be a Coxeter diagram satisfying the assumptions of the lemma. Let $V(H)=\{u,\allowbreak v,\allowbreak w,\allowbreak x,\allowbreak y\}$ and let $E_{\alpha}$ be the set of $\alpha$-edges. By Lemma~\ref{lem_partial_classification_cox_2_3_alpha_edges}, we distinguish two cases, up to isomorphism.

1) $E_{\alpha}=\{ux,vy\}$. In this case, all triangles containing an $\alpha$-edge are of type $(\alpha\beta\gamma)$ or $(\alpha,\alpha+\beta,\gamma)$. In particular, every such triangle contains exactly one $\gamma$-edge. By symmetry, we may assume that $xw$ and $yw$ are $\gamma$-edges. The other two $\gamma$-edges form a matching on vertices $u,v,x,y$; there are two possibilities, $\{uv,xy\}$ and  $\{uy,vx\}$. The remaining four edges are of type $\beta$ or $\alpha+\beta$.

If all the remaining edges are $\beta$-edges, or if both $uw$ and $vw$ are of type $\alpha+\beta$, the diagram has a symmetry $\Phi$ exchanging simultaneously $u$ with $v$ and $x$ with $y$, so the $(\alpha\beta\gamma)$-triangles form at most three orbits. Since there are at least four $(\alpha\beta\gamma)$-triangles in $H$, it follows that exactly one of the four remaining edges is of type $\alpha+\beta$ and the remaining three edges are $\beta$-edges. If $uv$ or $xy$ is the edge of type $\alpha+\beta$ (so $uy$ and $vx$ were chosen as $\gamma$-edges), then $\Phi$ is again a symmetry of the diagram and the $(\alpha\beta\gamma)$-triangles form only two orbits. 
Thus, up to isomorphism, we have three possibilities for $H$; see Figure~\ref{fig_alfa+2beta=pi}(a)--(c). 

2) $E_{\alpha}=\{uw,vw,xy\}$. In this case the edge $uv$ must be of type $2\beta$ and it is the only edge of this type. Each of the seven $(\alpha**)$-triangles other than $uvw$ has exactly one $\gamma$-edge. Due to symmetry, we may assume that $xw$ is a $\gamma$-edge. Then $yu$ and $yv$ must be $\gamma$-edges as well. The remaining three edges, $yw,xu$ and $xv$, are of type $\beta$ or $\alpha+\beta$. If all these three edges are of type $\beta$, the diagram has a symmetry $\Psi$ exchanging $u$ and $v$, but the $(\alpha\beta\gamma)$-triangles still form four orbits; see Figure~\ref{fig_alfa+2beta=pi}(d). Otherwise, exactly one of the three edges $yw,xu,xv$ is of type $\alpha+\beta$. If $yw$ is of type $\alpha+\beta$, then due to the symmetry $\Psi$, the $(\alpha\beta\gamma)$-triangles form only two orbits. The other two cases give isomorphic diagrams; see Figure~\ref{fig_alfa+2beta=pi}(e).
\end{proof}

\begin{figure}
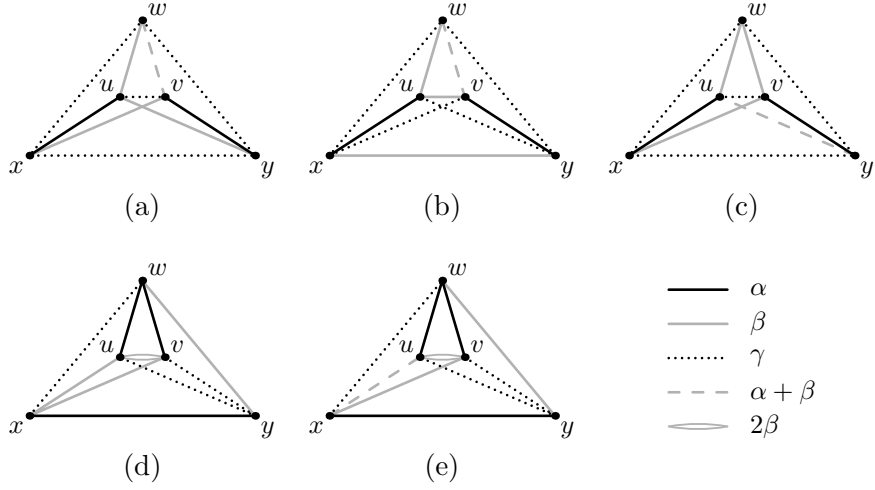

 \begin{center}
  \begin{tabular}{ccc}
\includegraphics{obrazky/coxeter.600} & \includegraphics{obrazky/coxeter.601} & \includegraphics[scale=1]{obrazky/coxeter.602}\\
(a) & (b) & (c) \\
 & & \\
\includegraphics[scale=1]{obrazky/coxeter.603} & \includegraphics[scale=1]{obrazky/coxeter.604} & \includegraphics{obrazky/popisky.10} \\
 (d) & (e) & 
\end{tabular}
\caption{Coxeter diagrams for the case $\alpha + 2\beta = \pi$.}
\label{fig_alfa+2beta=pi}
 \end{center}
\end{figure}

We immediately notice that the diagram in Figure~\ref{fig_alfa+2beta=pi}(e) cannot be a Coxeter diagram of $S$, since $xuv$ is a $(\beta,\alpha+\beta,2\beta)$-triangle but there is no spherical triangle of type $(\beta,\alpha+\beta,2\beta)$ by Lemma~\ref{lemma_fact}(b).

We are left with the diagrams in Figure~\ref{fig_alfa+2beta=pi}(a)--(d). Since investigating realizable $(\beta**)$-triangles does not seem to help much, we proceed to the next step and use Fiedler's theorem (Theorem~\ref{theorem_fiedler}).

Recall that the matrix $A$ associated to a simplex $S$ satisfies $a_{i,i}=-1$ and $a_{ij} = \cos\beta_{ij}$ for $i\neq j$, where $\beta_{ij}$ is the dihedral angle between facets $F_i$ and $F_j$. In particular, the matrix $A$ is completely determined by the Coxeter diagram $c(S)$.

Let $t:=\cos\beta$ and $s:=\cos\alpha$. Since $\alpha + 2\beta = \pi$ and
$\alpha < \beta$, it follows that $s= 1-2t^2$ and $t \in (0,1/2)$. Moreover, $\cos 2\beta = 2t^2-1$ and $\cos(\alpha + \beta) = \cos(\pi-\beta) = -t.$

The following matrices $A_1, \dots, A_4$ are associated to the simplices represented by the diagrams in Figure~\ref{fig_alfa+2beta=pi}(a)--(d), respectively.
The rows and columns  of $A_1,\dots,A_4$ are indexed by $u,v,w,x,y$ (in this order), where the vertices $u,v,w,x,y$ of $c(S)$ represent facets $F_1,\dots,F_5$ in $S$.

$$A_1=
\left(\begin{array}{rrrrr}
-1 & 0 & t & 1-2t^{2} & t\\
0 & -1 & -t & t & 1-2t^{2}\\
t & -t & -1 & 0 & 0\\
1-2t^{2} & t & 0 & -1 & 0\\
t & 1-2t^{2} & 0 & 0 & -1
\end{array}\right)
$$

$$A_2=
\left(\begin{array}{rrrrr}
-1 & t & t & 1-2t^{2} & 0 \\
t & -1 & -t & 0 & 1-2t^{2} \\
t & -t & -1 & 0 & 0 \\
1-2t^{2} & 0 & 0 & -1 & t \\
0 & 1-2t^{2} & 0 & t & -1
\end{array}\right)
$$

$$ 
A_3 = 
\left(\begin{array}{rrrrr}
-1 & 0 & t & 1-2t^{2} & -t \\
0 & -1 & t & t & 1-2t^{2} \\
t & t & -1 & 0 & 0 \\
1-2t^{2} & t & 0 & -1 & 0 \\
-t & 1-2t^{2} & 0 & 0 & -1
\end{array}\right)$$

$$
A_4 = 
\left(\begin{array}{rrrrr}
-1 & 2t^{2}-1 & 1-2t^{2} & t & 0 \\
2t^{2} - 1 & -1 & 1-2t^{2} & t & 0 \\
1-2t^{2} & 1-2t^{2} & -1 & 0 & t \\
t & t & 0 & -1 & 1-2t^{2} \\
0 & 0 & t & 1-2t^{2} & -1
\end{array}\right)
$$

%
%
%

Considering $t$ as a variable, the determinants of the matrices $A_1, \dots, A_4$ are polynomials in $t$. Let $\Gamma(A_i)$ be the set of real roots of the determinant of $A_i$. Rounding the roots to two decimal places, 
we have:

\bigskip
\noindent
{\small
\begin{center}
\begin{tabular}{c|c|c}
 $A$ & $\det(A)$ & $\Gamma(A)$\\
 \hline
 
 $A_1$ & $-t^2(2t-1)(2t^2 - t - 2)(4t^3 + 4t^2 - t - 2)$ & $\{0,0.5, -0.78,1.28, 0.63\}$ \\
  $A_2$ & $-t^2(2t - 1)(2t^2 + t - 2)(4t^3 + 2t^2 - 3t - 2)$ & $\{0,0.5, -1.28, 0.78, 0.92\} $\\
   $A_3$ & $-t^4(2t - 1)(2t + 1)(4t^2- 3)$ & $\{0,\pm 0.5, \pm 0.87\}$ \\
    $A_4$ & $-8t^4(2t^2 - 1)(4t^4 - 7t^2 + 2)$ & $\{0,\pm 0.71, \pm1.18, \pm 0.60\}$
\end{tabular}
\end{center}
}
\bigskip
\medskip

By Fiedler's theorem, the matrix associated to a simplex is singular. Therefore, the determinant of $A_i$ must have a root in the interval $(0,1/2)$. Since no $A_i$ satisfies this condition, we have a contradiction and the lemma follows.

\paragraph{Case b) $\beta = \pi/3$ and $\alpha > \pi/6$.}

\begin{lemma}\label{lemma_4-poss-for-alfa}
If $\T_0=(\alpha,\pi/3,\pi/2)$ with $\alpha>\pi/6$, then $\alpha \in\{\pi/4, 2\pi/9, \pi/5\}$.
\end{lemma}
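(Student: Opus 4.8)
The plan is to combine the Bricard-type condition (Lemma~\ref{lemma_bric}) with the already established constraints $\beta=\pi/3$, $\gamma=\pi/2$, and $\alpha>\pi/6$ (from Lemma~\ref{gamma=pi/2}(b)), together with $\alpha<\beta=\pi/3$ (from the standing assumption $\alpha<\beta<\gamma$). First I would apply Lemma~\ref{lemma_bric} with $i=1$: there exist nonnegative integers $m_1,m_2,m_3$ with $m_1>0$ and $m_1\alpha+m_2\cdot\tfrac{\pi}{3}+m_3\cdot\tfrac{\pi}{2}=\pi$. Dividing by $\pi$ gives $m_1\alpha/\pi=1-m_2/3-m_3/2$, so $\alpha/\pi$ is rational; write $\alpha=\tfrac{p}{q}\pi$ in lowest terms. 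Since $\tfrac16<\tfrac pq<\tfrac13$, the relation forces $m_1 \ge 3$ (because $m_1\alpha < m_1\pi/3$ must be able to reach a value $\ge \pi/2$ when $m_2=m_3=0$, or more carefully one just enumerates: the right-hand side $1-m_2/3-m_3/2$ is a nonnegative number of the form $k/6$, so $m_1\alpha=\tfrac{k}{6}\pi$ with $k\in\{1,2,3\}$ since $\alpha<\pi/3$ forces $m_1\alpha<m_1\pi/3$ and $m_1\alpha\le\pi$; combined with $\alpha>\pi/6$ this pins down very few options).

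The cleaner route: from $m_1\alpha+m_2\tfrac\pi3+m_3\tfrac\pi2=\pi$ and $\pi/6<\alpha<\pi/3$ we get $m_1\cdot\tfrac\pi6 < m_1\alpha \le \pi$, so $m_1\le 5$, and also $m_1\alpha \ge \pi - m_2\tfrac\pi3 - m_3\tfrac\pi2$. I would then simply enumerate the finitely many admissible triples $(m_1,m_2,m_3)$ with $m_1\in\{1,\dots,5\}$, $m_2,m_3\ge 0$, and the constraint that $\alpha:=(\pi - m_2\pi/3 - m_3\pi/2)/m_1$ lies strictly in $(\pi/6,\pi/3)$. Each valid triple yields a candidate value of $\alpha$: e.g. $(2,1,0)$ gives $\alpha=\pi/3$ (excluded, equals $\beta$), $(3,0,1)$ gives $\alpha=\pi/6$ (excluded), $(4,1,0)$ gives $\alpha=\pi/6$ (excluded), $(3,1,0)$ gives $\alpha=2\pi/9$, $(4,0,1)$ gives $\alpha=\pi/8$ (excluded, $<\pi/6$), $(4,2,0)$ gives $\alpha=\pi/12$ (excluded), $(5,1,0)$ gives $\alpha=2\pi/15$ (excluded), and one finds $(4,1,0)$-type and a handful of others reduce to the three surviving values $\pi/4$ (from, say, $m_1\alpha=\pi/2$, i.e. a relation like $2\alpha+\tfrac\pi6\cdot? $—here $\alpha=\pi/4$ arises as $4\alpha=\pi$, triple $(4,0,0)$), $2\pi/9$ (triple $(3,1,0)$: $3\alpha+\pi/3=\pi$), and $\pi/5$ (triple $(5,0,0)$: $5\alpha=\pi$). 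A careful finite check confirms these are exactly the values of the form $(\pi-m_2\pi/3-m_3\pi/2)/m_1$ in the open interval $(\pi/6,\pi/3)$ with $m_1\ge 1$.

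I would present this as: observe $\alpha=\pi/m_1$ is possible only for $m_1\in\{4,5\}$ within the range (giving $\pi/4,\pi/5$), and any relation genuinely involving $\beta$ or $\gamma$ (i.e.\ $m_2+m_3>0$) forces, after bounding $m_1\le 5$ and checking each case, only the additional value $\alpha=2\pi/9$. The main obstacle — though it is really just bookkeeping — is making sure the enumeration of $(m_1,m_2,m_3)$ is exhaustive and that no spurious value slips in; one should also double-check that for each surviving $\alpha$ the triangle $(\alpha,\pi/3,\pi/2)$ actually satisfies the basic spherical constraints (Lemma~\ref{lemma_fact}(a): $\alpha+\tfrac\pi3+\tfrac\pi2>\pi \iff \alpha>\pi/6$, which holds), so none of $\pi/4,2\pi/9,\pi/5$ is excluded at this stage. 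Hence $\alpha\in\{\pi/4,2\pi/9,\pi/5\}$, as claimed.
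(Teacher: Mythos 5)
Your proposal is correct and follows essentially the same route as the paper: apply Lemma~\ref{lemma_bric} with $i=1$ to obtain $m\alpha+n\pi/3+p\pi/2=\pi$ with $m\ge 1$, and then enumerate the finitely many solutions compatible with $\pi/6<\alpha<\pi/3$. The paper merely organizes the (identical) case analysis a bit more compactly, by first splitting on $p$ and then on $n$; your exhaustive check of the admissible triples arrives at the same three values $\pi/4$, $2\pi/9$, $\pi/5$.
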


\begin{proof}
By Lemma~\ref{lemma_bric}, there are integers 
$m \ge 1$ and $n,p\ge 0$  such that $m\alpha + n\pi/3 +p\pi/2 = \pi$. If $p\ge 1$, then $p=1$ and $n=0$. Since $\pi/6 <\alpha < \pi/3$, we have $m=2$ and thus $\alpha=\pi/4$. If $p=0$ then $n \le 1$. 
For $n=0$ we have $\alpha = \pi/m$, thus $m = 4$ or $5$. For $n=1$ we have 
$\alpha=2\pi/(3m)$, which is in the interval $(\pi/6,\pi/3)$ only for $m = 3$. The lemma follows. 
\end{proof}

The following simple observation will be useful for determining all realizable $(\alpha**)$- and $(\pi/3**)$-triangles.

\begin{observation}\label{obs_dlazdeni_cocky}
 Let $\mathcal{L}$ be a $\varphi$-lune that can be tiled with $\T_0$. Let $\mathcal{H}$ be a realizable spherical triangle whose two copies tile the lune $\mathcal{L}$ and let $\mathcal{K} \subseteq \mathcal{H}$ be a realizable corner-filling spherical
 triangle in $\mathcal{L}$ whose tiling by copies of $\T_0$ can be extended to a tiling of $\mathcal{H}$. Then the complement of $\mathcal{K}$ in $\mathcal{L}$ is also realizable; see Figure~\ref{fig_dlazdeni_cocky}. 
\qed
\end{observation}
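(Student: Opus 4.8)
The plan is to build the tiling of the complement $\mathcal{L}\setminus\mathcal{K}$ out of the data already available, essentially by a reflection trick. First I would fix a tiling $\mathcal{D}$ of $\mathcal{H}$ by copies of $\T_0$ whose restriction to $\mathcal{K}$ is the prescribed tiling of $\mathcal{K}$; such a $\mathcal{D}$ exists precisely because the given tiling of $\mathcal{K}$ extends to a tiling of $\mathcal{H}$. Write $\mathcal{L}=\mathcal{H}_1\cup\mathcal{H}_2$ for the two interior-disjoint copies of $\mathcal{H}$ that tile $\mathcal{L}$, labelled so that the copy of $\mathcal{K}$ sitting inside $\mathcal{L}$ is contained in $\mathcal{H}_1$ (this is what the hypothesis ``$\mathcal{K}\subseteq\mathcal{H}$'' says once $\mathcal{K}$ is placed in $\mathcal{L}$). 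Since $\mathcal{H}_1$ and $\mathcal{H}_2$ are both congruent to $\mathcal{H}$, I would pick isometries of the sphere carrying $\mathcal{H}$ onto $\mathcal{H}_1$ and onto $\mathcal{H}_2$, choosing the first one so that it carries the abstract copy $\mathcal{K}\subseteq\mathcal{H}$ onto the copy of $\mathcal{K}$ inside $\mathcal{H}_1$.

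Transporting $\mathcal{D}$ through these two isometries yields tilings $\mathcal{D}_1$ of $\mathcal{H}_1$ and $\mathcal{D}_2$ of $\mathcal{H}_2$ by copies of $\T_0$. Because $\mathcal{D}$ extends the prescribed tiling of $\mathcal{K}$, the tiles of $\mathcal{D}_1$ contained in $\mathcal{K}$ are exactly the tiles of the prescribed tiling of $\mathcal{K}$, so the remaining tiles of $\mathcal{D}_1$ form a tiling of $\mathcal{H}_1\setminus\mathcal{K}$ by copies of $\T_0$. I would then take $\mathcal{E}$ to be the collection of the tiles of $\mathcal{D}_1$ not contained in $\mathcal{K}$ together with all tiles of $\mathcal{D}_2$. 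Every member of $\mathcal{E}$ is a copy of $\T_0$; any two members have disjoint interiors (two tiles of $\mathcal{D}_1$, or two of $\mathcal{D}_2$, since those are tilings, and one from each since $\mathcal{H}_1$ and $\mathcal{H}_2$ have disjoint interiors); and $\bigcup\mathcal{E}=(\mathcal{H}_1\setminus\mathcal{K})\cup\mathcal{H}_2=\mathcal{L}\setminus\mathcal{K}$. Hence $\mathcal{E}$ is a tiling of $\mathcal{L}\setminus\mathcal{K}$ by copies of $\T_0$, and since $\mathcal{K}$ is corner-filling the set $\mathcal{L}\setminus\mathcal{K}$ is a spherical triangle; so it is realizable, as claimed.

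I do not expect a genuine obstacle here: the statement is really a bookkeeping exercise tracking which copies of $\T_0$ end up in which part of the lune. The two points that deserve a word of care are both harmless. One is that none of the tilings involved is required to be edge-to-edge, so the possible mismatch of $\mathcal{D}_1$ and $\mathcal{D}_2$ along the common boundary arc of $\mathcal{H}_1$ and $\mathcal{H}_2$ causes no trouble. The other, and the only point that needs any attention, is that the isometry $\mathcal{H}\to\mathcal{H}_1$ can indeed be chosen to carry the abstract $\mathcal{K}$ onto the $\mathcal{K}$ inside $\mathcal{L}$; this follows from the rigidity of spherical triangles together with the hypothesis that $\mathcal{K}$ is simultaneously a sub-triangle of (a copy of) $\mathcal{H}$ and a corner-filling triangle of $\mathcal{L}$.
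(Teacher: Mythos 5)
Your proposal is correct, and it is exactly the argument the paper has in mind: the Observation is stated with an immediate \qed and only a pointer to Figure~\ref{fig_dlazdeni_cocky}, so the authors regarded precisely this bookkeeping (tile both halves $\mathcal{H}_1,\mathcal{H}_2$ of the lune by transported copies of the tiling of $\mathcal{H}$ that extends the given tiling of $\mathcal{K}$, then discard the tiles inside $\mathcal{K}$) as too routine to write out. Your two points of care are both handled appropriately, so there is nothing to add.
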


\begin{figure}
 \begin{center}
    \includegraphics[scale=1]{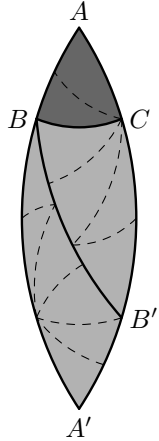}\\
   \caption{A tiling of a lune $\mathcal{L}$ where $\mathcal{H} = ABB'$ and $\mathcal{K} = ABC$. }
\label{fig_dlazdeni_cocky}
 \end{center}
\end{figure}

In the following lemma we investigate all
realizable $(\alpha**)$- and $(\beta**)$-triangles
for $\T_0=(\alpha,\pi/3,\pi/2).$ For better clarity we write $\alpha$ and
$\beta$ rather than their numerical values.

\begin{lemma}\label{l_pi-4-5-9}
 Let $\T_0 = (\alpha,\beta,\pi/2)$, where $\beta=\pi/3$. Depending on the value of $\alpha$, the realizable $(\alpha**)$-triangles 
and $(\beta**)$-triangles other than $\T_0$ are the following.
\begin{enumerate}

\item[{\rm 1)}]
  $\alpha = \pi/4$
 \begin{itemize}
  \item $(\alpha,\alpha,2\beta)$, $(\alpha,\pi/2,\pi/2)$, $(\alpha,\beta,3\alpha)$,
$(\alpha,\pi/2,2\beta)$
  \item $(\beta,\beta,\pi/2)$, $(\beta,\beta,2\beta)$, $(\beta,\pi/2,2\beta)$, $(\beta,\pi/2,3\alpha)$
 \end{itemize}

\item[{\rm 2)}]
 $\alpha = \pi/5$
 \begin{itemize}
  \item $(\alpha,\alpha,2\beta)$, $(\alpha,2\alpha,\pi/2)$, $(\alpha,\beta,3\alpha)$, $(\alpha, \beta, 2\beta)$, $(\alpha,\alpha,4\alpha)$,  
  $(\alpha,2\alpha,2\beta)$,\\ $(\alpha,\pi/2,3\alpha)$, $(\alpha,\beta,4\alpha)$, $(\alpha,\pi/2,2\beta)$
  \item $(\beta,\beta,2\alpha)$, $(\beta,2\alpha,\pi/2)$, $(\beta,2\alpha,3\alpha)$,
$(\beta,\pi/2,3\alpha)$, $(\beta,\beta,4\alpha)$, $(\beta,2\alpha,4\alpha)$,\\ $(\beta,3\alpha,2\beta)$, $(\beta,\pi/2,4\alpha)$
 \end{itemize}

\item[{\rm 3)}]
 $\alpha = 2\pi/9$
 \begin{itemize}
  \item $(\alpha,\alpha,2\beta)$, $(\alpha,2\alpha,\pi/2)$, $(\alpha,\beta,2\beta)$, 
$(\alpha,\pi/2,\alpha+\beta)$, $(\alpha,\beta,2\alpha+\beta)$, $(\alpha, \alpha, 4\alpha)$, $(\alpha,\pi/2,2\beta)$
  \item $(\beta,\beta,2\alpha)$, $(\beta,\alpha+\beta,2\beta)$, $(\beta,\pi/2,2\alpha+\beta).$
 \end{itemize}
\end{enumerate}
\end{lemma}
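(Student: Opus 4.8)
The plan is to carry out, separately for each of the three values $\alpha\in\{\pi/4,\,2\pi/9,\,\pi/5\}$, an exhaustive enumeration of all tilings of a spherical triangle by copies of $\T_0=(\alpha,\beta,\pi/2)$ with $\beta=\pi/3$, and to read off which triangle types occur. The three cases differ in arithmetic detail — which multiples $i\alpha$, and which mixed sums $i\alpha+j\beta$, are at most $\pi$ — so they are handled one at a time, but by a uniform method.

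First I would record the numerical data. The spherical area is $\Delta(\T_0)=\alpha+\beta+\pi/2-\pi=\alpha-\pi/6$, that is $\pi/12$, $\pi/18$, $\pi/30$ in the three cases. By Lemma~\ref{lemma_fact}(b) a realizable $(\alpha**)$-triangle lies in the $\alpha$-lune, of area $2\alpha$, and hence is tiled by fewer than $2\alpha/\Delta(\T_0)$ copies of $\T_0$, namely at most $5$, $7$ and $11$ tiles; similarly a realizable $(\beta**)$-triangle lies in the $\beta$-lune, of area $2\pi/3$, and uses at most $7$, $11$ and $19$ tiles. Next, using the spherical law of cosines (Lemma~\ref{lemma_fact}(c),(d)) I would compute the edge lengths $a<b<c$ of $\T_0$ and determine which of them can, and which cannot, be expressed as a sum of shorter edges of $\T_0$; together with Lemma~\ref{lemma_fact}(e) this dictates how an edge of a partial tiling may be subdivided by its neighbours. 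Finally I would tabulate all nonnegative integer combinations of $\alpha,\beta,\pi/2$ that are at most $\pi$ (for each $\alpha$ there are only a handful), which controls the admissible angle sums at a point on an edge (sum $\pi$) and at a vertex (sum $<\pi$).

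With this data the enumeration grows a tiling one tile at a time. To list the realizable $(\alpha**)$-triangles I would place such a triangle inside the $\alpha$-lune with its $\alpha$-vertex at a vertex of the lune; since $\alpha$ is the minimum of its angles, the tile at that vertex fills the corner and, by Lemma~\ref{lemma_fact}(f), is corner-filling with respect to the lune (Observation~\ref{obs_corner}). Starting from this essentially unique tile I repeatedly attach a tile along the frontier of the already covered region; at every step the edge lengths of the frontier together with the short list of admissible angle sums leave only finitely many choices, and the area bound above terminates every branch. I record each triangle that is completed. The realizable $(\beta**)$-triangles are enumerated the same way, starting from a corner tile at the $\beta$-vertex of the triangle placed in the $\beta$-lune — this tile is again corner-filling because $\alpha$ does not divide $\beta$. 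Observation~\ref{obs_dlazdeni_cocky} is a recurring shortcut: once a corner-filling tiled triangle $\mathcal K\subseteq\mathcal H$ is known to extend to a triangle $\mathcal H$ whose double tiles the ambient lune $\mathcal L$, the complement $\mathcal L\setminus\mathcal K$ is automatically realizable, so configurations analysed once can be reused. Conversely, every triangle listed in the statement is exhibited by an explicit tiling (drawn in the accompanying figures), which gives the matching lower bound, so the list is both complete and achievable.

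I expect the main obstacle to be simply the breadth of the case analysis — particularly for $\alpha=\pi/5$, where a realizable triangle may consist of up to about a dozen tiles and the tree of partial tilings is correspondingly wide — so the real work is a long but routine traversal in which the only genuine dangers are overlooking a branch or double-counting. The area bound from Lemma~\ref{lemma_fact}(b) and the spherical triangle inequality are the two pruning tools that keep the search finite, and the fact that $\alpha$ does not divide $\beta$ (so that corner tiles are corner-filling) is what makes each branch begin in an essentially unique way.
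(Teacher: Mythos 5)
Your strategy is sound in principle, but it reorganizes the work in a genuinely different way from the paper. You propose a direct tile-by-tile tree search over partial tilings inside the $\tau$-lune, reading off every triangle that gets completed. The paper instead works in two stages: it first enumerates candidate \emph{types} purely arithmetically, using the area identity $\Delta(\T)=n\Delta(\T_0)$ rewritten as $n(\alpha+\beta+\gamma-\pi)+\pi-\tau=\emm_1\alpha+\emm_2\beta+\emm_3\gamma$ with $2\le n<2\tau/\Delta(\T_0)$, then prunes by the spherical triangle inequality and by checking (via the law of cosines) that the edge opposite $\tau$ is a nonnegative integer combination of $a,b,c$ --- all of this delegated to a short Sage program --- and only afterwards exhibits one explicit tiling for each surviving candidate, using Observation~\ref{obs_dlazdeni_cocky} to halve that work. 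The trade-off is real: your forward search needs no separate realizability check and would automatically certify non-realizability by exhaustion, whereas the paper must handle by hand the one candidate that passes every arithmetic filter but is not realizable, namely $(\beta,\beta,2\alpha+\beta)$ for $\alpha=2\pi/9$ (ruled out by showing that neither $2b-a$ nor $2b-c$ is a nonnegative integer combination of $a,b,c$). On the other hand, your search tree is far larger --- for $\alpha=\pi/5$ a $(\beta**)$-triangle may use up to $19$ tiles, and growing tilings of an \emph{unknown} target triangle requires a canonical attachment order to avoid missed or duplicated branches --- so executed by hand it would be considerably more error-prone than the paper's candidate-first filtering; since the content of the lemma \emph{is} the outcome of the enumeration, your proposal as written is a correct plan rather than a completed proof, just as the paper's argument ultimately rests on its program output and figures.
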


\begin{proof}
Let $a,b,c$ denote the edges (and also their lengths) of $\T_0$ opposite to angles $\alpha,\beta,\gamma$, respectively.

Now we describe a general procedure of finding all realizable $(\tau**)$-triangles, which we then apply to specific values of $\tau$.
First we generate all spherical triangles whose angles and edges can be obtained as nonnegative integer combinations of angles and edges, respectively, of the basic tile $\T_0$, and whose spherical area is a multiple of $\Delta(\T_0)$, the spherical area of $\T_0$. After that we check whether they are realizable. Clearly, this involves going through only a finite number of triangles and a finite number of tilings. In fact, it will turn out that all the generated spherical triangles but one are realizable.

Here we provide more details about the procedure.
Let $\T=(\tau\varphi\psi)$, $\T \neq \T_0$, be a realizable triangle.
We may assume that $\T$ is a corner-filling triangle in the $\tau$-lune and $\varphi \le \psi$.

Since $\T$ is realizable, it follows that $\Delta(\T) = n \Delta(\T_0)$, where 
 $n$ is the number of copies of $\T_0$ needed to tile $\T$. Moreover, the angles $\tau, \varphi, \psi$ can be expressed as nonnegative integer combinations of $\alpha,\beta,\gamma$.
 By Lemma~\ref{lemma_fact}(a), we have 
\begin{align}
 n\Delta(\T_0) &=  \Delta(\T) = \tau + \varphi + \psi  - \pi      \nonumber \\
 n(\alpha + \beta + \gamma - \pi) + \pi -\tau &=  \varphi + \psi  \nonumber \\
 n(\alpha + \beta + \gamma - \pi) + \pi -\tau & =  \emm_1\alpha + \emm_2\pi/3 + \emm_3\pi/2. \label{e:rce_pro_tau}
\end{align}

Note that $2 \le n < 2\tau/\Delta(\T_0)$, since $\T \neq \T_0$ and the volume of the $\tau$-lune is $2\tau$.
From the solutions of the equation \eqref{e:rce_pro_tau} we
get all possible values of $\varphi + \psi$ and hence also all possible pairs $\varphi,\psi$.
We keep only those triples $\tau, \varphi,\psi$ that satisfy the spherical triangle inequality (Lemma~\ref{lemma_fact}(b)).
Using the spherical law of cosines (Lemma~\ref{lemma_fact}(c)), we compute the length of the edge $x$ of $\T$ opposite to the angle $\tau$ and check whether it can be obtained as a nonnegative integer combination of $a,b,c$. If not, then $\T$ is clearly not realizable. 

Since the calculations are tedious and it is relatively easy to make a mistake or forget some case, we decided to write a computer program to 
go through all the possibilities. The program was written in Sage 5.4.1~\cite{sage} and can be found in Appendix~\ref{s:appendix}.
The program takes five arguments $\alpha, \beta, \gamma, \tau,\rho$ and searches for all possible realizable $(\tau\varphi\psi)$-triangles 
with $\rho < \varphi \le \psi$ as follows. First, the program goes through all $4$-tuples $n, \emm_1, \emm_2, \emm_3$ satisfying equation~\eqref{e:rce_pro_tau}. Then
it tries to split $\emm_1\alpha + \emm_2\beta + \emm_3\gamma$ between $\varphi$ and $\psi$. If two splittings provide the same pair $(\varphi,\psi)$,
we list it just once. Then we test the condition (b) from Lemma~\ref{lemma_fact} for the $(\tau\varphi\psi)$-triangle.
As the last step, we use the spherical law of cosines (Lemma~\ref{lemma_fact}(c)) to approximate the length of the edge $x$ opposite to the angle $\tau$ and test whether it can be obtained as an approximate nonnegative integer combination of $a,b,c$. If not, we list the nonnegative integer combinations of $a,b,c$ that are closest to $x$. 
In the output we round the numerical values to three decimal places.

For every triangle $\T=(\tau\varphi\psi)$ not excluded by the program, we try to find some tiling of $\T$ by hand.
We use Observation~\ref{obs_dlazdeni_cocky} to simplify the search: in many cases, it will be enough to
find a tiling of a corner-filling triangle whose two copies fill the whole $\tau$-lune.

1) First we find realizable $(\alpha**)$-triangles for $\alpha = \pi/4$.
Table~\ref{t:pi_4_alfa} lists the output of the program for $\alpha, \beta, \gamma, \alpha, 0$. 
The four highlighted pairs $(\varphi,\psi)$ in Table~\ref{t:pi_4_alfa} are the only candidates for realizable $(\alpha\varphi\psi)$-triangles. Figure~\ref{fig_pi-4}, left, shows that all four candidates from Table~\ref{t:pi_4_alfa} are indeed realizable.
The realizability of the triangles $A'BD$ and $A'CB$ in Figure~\ref{fig_pi-4}, left, follows by Observation~\ref{obs_dlazdeni_cocky} since $A'DE$ is a copy of $ADE$.

\begin{table}[Htb!]
\begin{center}
\begin{tabu}{ccllc}
$n$ & $(\emm_1,\emm_2,\emm_3)$ & $(\varphi, \psi)$& $x$ & best approximation for $x$ using $a,b,c$\\
\hline
\noalign{\vskip 1mm}

2 & (1,2,0) & $(\beta,\alpha+\beta)$&0.808 &$0.785 \sim b < x < c \sim 0.955$\\
  & & $(\alpha,2\beta)$ {\cellcolor{zluta}} & 0.955 & $x = c$  \\ 
3 & (0,0,2) & $(\gamma,\gamma)$ {\cellcolor{zluta}} & 0.785 & $x =  b$ \\ 
  & (0,3,0) &$(\beta,2\beta)$&0.915 &$0.785 \sim b < x < c \sim 0.955$\\ 
  & (2,0,1) &$(\alpha,\alpha+\gamma)$&1.144 &$0.955 \sim c < x < 2a \sim 1.231$\\ 
4 & (1,1,1) &$(\gamma,\alpha+\beta)$&0.749 &$0.615 \sim a < x < b \sim 0.785$\\
  & & $(\beta,\alpha+\gamma)$ {\cellcolor{zluta}} & 0.955 & $x =  c$ \\
  & &$(\alpha,\beta+\gamma)$&1.300 &$1.231 \sim 2a < x < a+b \sim 1.401$\\ 
5 & (0,2,1) & $(\gamma,2\beta)$ {\cellcolor{zluta}} &  0.615 & $x =  a$ \\
  & &$(\beta,\beta+\gamma)$&0.885 &$0.785 \sim b < x < c \sim 0.95532$\\ 
  & (2,2,0) &$(\alpha,\alpha+2\beta)$&1.439 &$1.401 \sim a+b < x < 2b \sim 1.571$\\
  & & $(\alpha+\beta,\alpha+\beta)$&0.592 &$0.0 \sim 0 < x < a \sim 0.615$\\

\end{tabu}
\caption[$(\alpha**)$-triangles for $\alpha = \pi/4$.]{$(\alpha**)$-triangles for $\alpha = \pi/4$. Approximate values of $a,b,c$ are $a \sim 0.615$, $b\sim 0.785$ and $c \sim 0.955$.}\label{t:pi_4_alfa}
\end{center}
\end{table}

Now we find all realizable $(\beta \varphi \psi)$-triangles. 
 Since we have characterized all realizable $(\alpha **)$-triangles, we 
 can assume that $\varphi, \psi > \alpha$.
 Table~\ref{t:pi_4_beta} lists the output of the program for $\alpha, \beta, \gamma, \beta, \alpha$. Figure~\ref{fig_pi-4}, right, shows that all the candidates from Table~\ref{t:pi_4_alfa} are realizable. Again, triangles $B'AD$ and $B'CA$ in Figure~\ref{fig_pi-4}, right, are realizable by Observation~\ref{obs_dlazdeni_cocky} since $B'AG$ is a copy of $BGA$.

\begin{table}[Htb!]
\begin{center}
\begin{tabu}{ccllc}
$n$ & $(\emm_1,\emm_2,\emm_3)$ & $(\varphi, \psi)$& $x$ & best approximation for $x$ using $a,b,c$\\
\hline
\noalign{\vskip 1mm}

2 & (0,1,1) & $(\beta,\gamma)$ {\cellcolor{zluta}} & 0.955 & $x = c$ \\ 
3 & (1,2,0) & $(\beta,\alpha+\beta)$ & 1.112 & $0.955 \sim c < x < 2a \sim 1.231$ \\
4 & (0,0,2) & $(\gamma,\gamma)$ & 1.047 & $0.955 \sim c < x < 2a \sim 1.231$ \\   
  & (0,3,0) & $(\beta,2\beta)$ {\cellcolor{zluta}} & 1.231 & $x =  2a$ \\ 
5 & (1,1,1) & $(\gamma,\alpha+\beta)$ & 1.027 & $0.955 \sim c < x < 2a \sim 1.231$ \\
  & & $(\beta,\alpha+\gamma)$ & 1.329 & $1.231 \sim 2a < x < a+b \sim 1.401$ \\ 
6 & (0,2,1) & $(\gamma,2\beta)$ {\cellcolor{zluta}} & 0.955 & $x = c$ \\
  & & $(\beta,\beta+\gamma)$ & 1.415 & $1.401 \sim a+b < x < 2b \sim 1.571$ \\ 
  & (2,2,0) & $(\alpha+\beta,\alpha+\beta)$ & 0.918 & $0.785 \sim b < x < c \sim 0.955$ \\
7 & (1,0,2)   & $(\gamma,\alpha+\gamma)$ {\cellcolor{zluta}} & 0.785 & $x = b$ \\ 
  & (1,3,0) & $(\beta,\alpha+2\beta)$ & 1.495 & $1.401 \sim a+b < x < 2b \sim 1.571$ \\
  & & $(\alpha+\beta,2\beta)$ & 0.719 & $0.615 \sim a < x < b \sim 0.785$ \\

\end{tabu}
\caption[$(\beta \varphi \psi)$-triangles for $\alpha = \pi/4$.]{$(\beta \varphi \psi)$-triangles for $\alpha = \pi/4$ and $\varphi, \psi > \alpha$. Approximate values of $a,b,c$ are $a\sim 0.615$, $b \sim 0.785$ and $c\sim 0.955$.}\label{t:pi_4_beta}
\end{center}
\end{table}

\begin{figure}
 \begin{center}
  \begin{minipage}{.25\textwidth}
  \begin{tabular}{l|l}
  \multicolumn{2}{c}{$(\alpha\varphi\psi)$-triangles} \\ \hline
   $(\alpha,\alpha,2\beta)$ & $ ADB$\\
    $(\alpha,\gamma,\gamma) $ & $ ADE$\\
     $(\alpha,\beta,3\alpha) $ & $ A'BD$\\
      $(\alpha,\gamma,2\beta) $ & $ A'CB$
  \end{tabular}
  \end{minipage}
  \begin{minipage}{.35\textwidth}
   \includegraphics[scale=.9]{obrazky/sphere-troj.4}
    \includegraphics[scale=.9]{obrazky/sphere-troj.5}
  \end{minipage}
   \begin{minipage}{.25\textwidth}
   \begin{tabular}{l|l}
  \multicolumn{2}{c}{$(\beta\varphi\psi)$-triangles} \\ \hline
   $(\beta,\beta,\gamma) $&$ BDA$\\
    $(\beta,\beta,2\beta) $&$ BFD$\\
     $(\beta,\gamma,2\beta) $&$  B'AD$\\
      $(\beta,\gamma,3\alpha) $&$ B'CA$
  \end{tabular}
      \end{minipage}
      \bigskip
      
      \includegraphics{obrazky/popisky.2}
   \caption{A tiling of $(\alpha**)$-triangles (left) and $(\beta**)$-triangles (right) for $\alpha = \frac{\pi}{4}$.}
\label{fig_pi-4}
 \end{center}
\end{figure}
 
2) First we find realizable $(\alpha **)$-triangles for $\alpha = \pi/5$. 
Table~\ref{t:pi_5_alfa} lists the output of the program for $\alpha, \beta, \gamma, \alpha, 0$.
Figure~\ref{fig_pi-5}, left, shows that all the candidates from Table~\ref{t:pi_5_alfa} are realizable. The triangles $A'DF, A'ED, A'BD, A'CB$ are realizable by Observation~\ref{obs_dlazdeni_cocky} since $A'HD$ is a copy of $ADH$.

\begin{table}[Htb!]
\begin{center}
\begin{tabu}{ccllc}
$n$ & $(\emm_1,\emm_2,\emm_3)$ & $(\varphi, \psi)$& $x$ & best approximation for $x$ using $a,b,c$\\
\hline
\noalign{\vskip 1mm}

2 & (1,2,0) & $(\beta,\alpha+\beta)$ & 0.498 & $0.365 \sim a < x < b \sim 0.554$ \\
  & & $(\alpha,2\beta)$ {\cellcolor{zluta}} & 0.652 & $x = c$ \\ 
3 & (2,0,1) & $(\alpha,\alpha+\gamma)$&0.794 &$0.730 \sim 2a < x < a+b \sim 0.918$ \\
  & & $(2\alpha,\gamma)$ {\cellcolor{zluta}} & 0.554 & $x = b$ \\ 
4 & (3,1,0) & $(\beta,3\alpha)$ {\cellcolor{zluta}} & 0.652 & $x = c$ \\
  & & $(\alpha,2\alpha+\beta)$ & 0.911 & $0.730 \sim 2a < x < a+b \sim 0.918$ \\
  & & $(2\alpha,\alpha+\beta)$ & 0.607 & $0.554 \sim b < x < c \sim 0.652$ \\ 
6 & (0,0,2) & $(\gamma,\gamma)$ & 0.628 &$0.554 \sim b < x < c \sim 0.652$ \\ 
  & (0,3,0) & $(\beta,2\beta)$ {\cellcolor{zluta}} & 0.730 & $x = 2a$ \\ 
  & (5,0,0) & $(\alpha,4\alpha)$ {\cellcolor{zluta}} & 1.107 & $x = 2b$ \\
  & & $(2\alpha,3\alpha)$ & 0.662 & $0.652 \sim c < x < 2a \sim 0.730$ \\ 
7 & (1,1,1) & $(\gamma,\alpha+\beta)$ & 0.620 & $0.554 \sim b < x < c \sim 0.652$ \\
  & & $(\beta,\alpha+\gamma)$ & 0.745 & $0.730 \sim 2a < x < a+b \sim 0.918$ \\
  & & $(\alpha,\beta+\gamma)$ & 1.193 & $1.107 \sim 2b < x < b+c \sim 1.206$ \\ 
8 & (2,2,0) & $(\beta,2\alpha+\beta)$ & 0.742 & $0.730 \sim 2a < x < a+b \sim 0.918$ \\
  & & $(\alpha,\alpha+2\beta)$ & 1.274 & $1.206 \sim b+c < x < 2a+b \sim 1.283$ \\
  & & $(\alpha+\beta,\alpha+\beta)$ & 0.593 & $0.554 \sim b < x < c \sim 0.652$ \\
  & & $(2\alpha,2\beta)$ {\cellcolor{zluta}} & 0.652 & $x = c$ \\
9 & (3,0,1) & $(\gamma,3\alpha)$ {\cellcolor{zluta}} & 0.554 & $x = b$ \\
  & & $(\alpha,2\alpha+\gamma)$ & 1.351 & $1.305 \sim 2c < x < 2a+c \sim 1.382$ \\
  & & $(2\alpha,\alpha+\gamma)$ & 0.617 & $0.554 \sim b < x < c \sim 0.652$ \\ 
10 & (4,1,0) & $(\beta,4\alpha)$ {\cellcolor{zluta}} & 0.652 & $x = c$ \\
  & & $(\alpha,3\alpha+\beta)$ & 1.426 & $1.382 \sim 2a+c < x < 4a \sim 1.459$ \\
  & & $(\alpha+\beta,3\alpha)$ & 0.475 & $0.365 \sim a < x < b \sim 0.554$ \\
  & & $(2\alpha,2\alpha+\beta)$ & 0.551 & $0.365 \sim a < x < b \sim 0.554$ \\ 
11 & (0,2,1) & $(\gamma,2\beta)$ {\cellcolor{zluta}} & 0.365 & $x = a$ \\
  & & $(\beta,\beta+\gamma)$ & 0.519 & $0.365 \sim a < x < b \sim 0.554$ \\

\end{tabu}
\caption[$(\alpha **)$-triangles for $\alpha = \pi/5$.]{$(\alpha **)$-triangles for $\alpha = \pi/5$. Approximate values of $a,b,c$ are $a\sim0.365$, $b\sim0.554$ and $c\sim0.652$.}\label{t:pi_5_alfa}
\end{center}
\end{table}

Now we find all realizable $(\beta \varphi \psi)$-triangles. We may again assume that $\varphi, \psi > \alpha$.
Table~\ref{t:pi_5_beta} lists the output of the program for $\alpha, \beta, \gamma, \beta, \alpha$. Figure~\ref{fig_pi-5}, right, shows that all the candidates from Table~\ref{t:pi_5_beta} are realizable.
The triangles $B'GI, B'DH, B'AI, B'AD$ and $B'CA$ are realizable by Observation~\ref{obs_dlazdeni_cocky} since $B'IH$ is a copy of $BHI$.

\begin{table}[Htb!]
\begin{center}
\begin{tabu}{ccllc}
$n$ & $(\emm_1,\emm_2,\emm_3)$ & $(\varphi, \psi)$& $x$ & best approximation for $x$ using $a,b,c$\\
\hline
\noalign{\vskip 1mm}

2 & (2,1,0) & $(\beta,2\alpha)$ {\cellcolor{zluta}} & 0.652 & $x = c$ \\ 
4 & (4,0,0) & $(2\alpha,2\alpha)$ & 0.852 & $0.730 \sim 2a < x < a+b \sim 0.918$ \\ 
5 & (0,1,1) & $(\beta,\gamma)$ & 0.955 & $0.918 \sim a+b < x < a+c \sim 1.017$ \\ 
6 & (1,2,0) & $(\beta,\alpha+\beta)$ & 1.024 & $1.017 \sim a+c < x < 3a \sim 1.095$\\ 
7 & (2,0,1) & $(2\alpha,\gamma)$ {\cellcolor{zluta}} & 1.017 &  $x =  a+c$ \\ 
8 & (3,1,0) & $(\beta,3\alpha)$ & 1.138 & $1.107 \sim 2b < x < b+c \sim 1.206$\\
  & & $(2\alpha,\alpha+\beta)$ & 1.054 & $1.017 \sim a+c < x < 3a \sim 1.095$\\ 
10 & (0,0,2) & $(\gamma,\gamma)$ & 1.047 & $1.017 \sim a+c < x < 3a \sim 1.095$\\
   & (0,3,0) & $(\beta,2\beta)$ & 1.231 & $1.206 \sim b+c < x < 2a+b \sim 1.283$\\ 
   & (5,0,0) & $(2\alpha,3\alpha)$ {\cellcolor{zluta}} & 1.107 & $x =  2b$ \\ 
11 & (1,1,1) & $(\gamma,\alpha+\beta)$ & 1.044 & $1.017 \sim a+c < x < 3a \sim 1.095$\\
   & & $(\beta,\alpha+\gamma)$ & 1.272 & $1.206 \sim b+c < x < 2a+b \sim 1.283$\\ 
12 & (2,2,0) & $(\beta,2\alpha+\beta)$ & 1.311 & $1.305 \sim 2c < x < 2a+c \sim 1.382$\\
   & & $(\alpha+\beta,\alpha+\beta)$ & 1.028 & $1.017 \sim a+c < x < 3a \sim 1.095$\\
   & & $(2\alpha,2\beta)$ & 1.138 & $ 1.107 \sim 2b < x < b+c \sim 1.206$\\ 
13 & (3,0,1) & $(\gamma,3\alpha)$ {\cellcolor{zluta}} & 1.017 &  $x = a+c$ \\
   & & $(2\alpha,\alpha+\gamma)$ & 1.144 & $1.107 \sim 2b < x < b+c \sim 1.206$\\ 
14 & (4,1,0) & $(\beta,4\alpha)$ {\cellcolor{zluta}} & 1.382 & $x = 2a+c$ \\
   & & $(\alpha+\beta,3\alpha)$ & 0.973 & $0.918 \sim a+b < x < a+c \sim 1.017$\\
   & & $(2\alpha,2\alpha+\beta)$ & 1.143 & $1.107 \sim 2b < x < b+c \sim 1.206$\\ 
15 & (0,2,1) & $(\gamma,2\beta)$ & 0.955 & $0.918 \sim a+b < x < a+c \sim 1.017$\\
   & & $(\beta,\beta+\gamma)$ & 1.415 & $1.382 \sim 2a+c < x < 4a \sim 1.459$\\ 
16 & (1,0,2) & $(\gamma,\alpha+\gamma)$ & 0.905 & $0.730 \sim 2a < x < a+b \sim 0.918$\\ 
   & (1,3,0) & $(\beta,\alpha+2\beta)$ & 1.448 & $1.382 \sim 2a+c < x < 4a \sim 1.459$\\
   & & $(\alpha+\beta,2\beta)$ & 0.875 & $0.730 \sim 2a < x < a+b \sim 0.918$\\ 
   & (6,0,0) & $(2\alpha,4\alpha)$ {\cellcolor{zluta}} & 1.107 &  $x = 2b$ \\
   & & $(3\alpha,3\alpha)$ & 0.852 & $0.730 \sim 2a < x < a+b \sim 0.918$\\ 
17 & (2,1,1) & $(\gamma,2\alpha+\beta)$ & 0.833 & $0.730 \sim 2a < x < a+b \sim 0.918$\\
   & & $(\beta,2\alpha+\gamma)$ & 1.479 & $1.472 \sim a+2b < x < a+b+c \sim 1.571$\\
   & & $(\alpha+\beta,\alpha+\gamma)$ & 0.798 & $0.730 \sim 2a < x < a+b \sim 0.918$\\
   & & $(2\alpha,\beta+\gamma)$ & 1.060 & $1.017 \sim a+c < x < 3a \sim 1.095$\\ 
18 & (3,2,0) & $(\beta,3\alpha+\beta)$ & 1.510 & $1.472 \sim a+2b < x < a+b+c \sim 1.571$\\
   & & $(\alpha+\beta,2\alpha+\beta)$ & 0.690 & $0.652 \sim c < x < 2a \sim 0.730$\\
   & & $(2\alpha,\alpha+2\beta)$ & 0.973 & $0.918 \sim a+b < x < a+c \sim 1.017$\\
   & & $(3\alpha,2\beta)$ {\cellcolor{zluta}} & 0.652 & $x =  c$ \\ 
19 & (4,0,1) & $(\gamma,4\alpha)$ {\cellcolor{zluta}} & 0.554 & $x = b$ \\
   & & $(2\alpha,2\alpha+\gamma)$ & 0.794 & $0.730 \sim 2a < x < a+b \sim 0.918$\\
   & & $(3\alpha,\alpha+\gamma)$ & 0.482 & $0.365 \sim a < x < b \sim 0.554$\\

\end{tabu}
\caption[$(\beta \varphi \psi)$-triangles for $\alpha = \pi/5$.]{$(\beta \varphi \psi)$-triangles for $\alpha = \pi/5$ and $\varphi, \psi > \alpha$. Approximate values of $a,b,c$ are $a\sim0.365$, $b\sim0.554$ and $c\sim0.652$.}\label{t:pi_5_beta}
\end{center}
\end{table}

\begin{figure}
\begin{center}  
  \begin{minipage}{.27\textwidth}
  \begin{tabular}{l|l}
  \multicolumn{2}{c}{$(\alpha\varphi\psi)$-triangles} \\ \hline
   $(\alpha,\alpha,2\beta)$ & $ ADB$\\
    $(\alpha,2\alpha,\gamma) $ & $ ADE$\\
     $(\alpha,\beta,3\alpha) $ & $ AFD$\\
      $(\alpha,\beta,2\beta) $ & $ AJF$\\
       $(\alpha,\alpha,4\alpha) $ & $ AHD$\\
     $(\alpha,2\alpha,2\beta) $ & $ A'DF$\\
      $(\alpha,\gamma,3\alpha) $ & $ A'ED$\\
     $(\alpha,\beta,4\alpha) $ & $ A'BD$\\
       $(\alpha,\gamma,2\beta) $ & $ A'CB$
  \end{tabular}
  \end{minipage}
  \begin{minipage}{.4\textwidth}
   \includegraphics[scale=.9]{obrazky/sphere-troj.7}
    \includegraphics[scale=.9]{obrazky/sphere-troj.9}
  \end{minipage}
   \begin{minipage}{.3\textwidth}
   \begin{tabular}{l|l}
  \multicolumn{2}{c}{$(\beta\varphi\psi)$-triangles} \\ \hline
   $(\beta,\beta,2\alpha) $&$ BDA$\\
    $(\beta,2\alpha,\gamma) $&$ BIG$\\
     $(\beta,2\alpha,3\alpha) $&$  BHI$\\
     $(\beta,\gamma,3\alpha) $&$  B'GI$\\
     $(\beta,\beta,4\alpha) $&$  B'DH$\\
      $(\beta,2\alpha,4\alpha) $&$  B'AI$\\
     $(\beta,3\alpha,2\beta) $&$  B'AD$\\
      $(\beta,\gamma,4\alpha) $&$ B'CA$
  \end{tabular}
      \end{minipage}
      \bigskip
    
      \includegraphics{obrazky/popisky.2}
  \caption{A tiling of $(\alpha**)$-triangles (left) and $(\beta**)$-triangles (right) for $\alpha = \frac{\pi}{5}$.}
\label{fig_pi-5}
 \end{center}
\end{figure}

3) First we find all realizable $(\alpha**)$-triangles. 
Table~\ref{t:2pi_9_alfa} lists the output of the program for $\alpha, \beta, \gamma, \alpha, 0$. 
Figure~\ref{fig_2pi-9}, left, shows that all the candidates from Table~\ref{t:2pi_9_alfa} are realizable. The triangles $A'ED, A'BD$ and $A'CB$ are realizable by
Observation~\ref{obs_dlazdeni_cocky} since $A'DF$ is a copy of $AFD$.

\begin{table}[Htb!]
\begin{center}
\begin{tabu}{ccllc}
$n$ & $(\emm_1,\emm_2,\emm_3)$ & $(\varphi, \psi)$& $x$ & best approximation for $x$ using $a,b,c$\\
\hline
\noalign{\vskip 1mm}

2 & (1,2,0) & $(\beta,\alpha+\beta)$ & 0.649 & $0.485 \sim a < x < b \sim 0.680$ \\
  & & $(\alpha,2\beta)$ {\cellcolor{zluta}} & 0.812 & $x = c$ \\ 
  & (4,0,0) & $(2\alpha,2\alpha)$ & 0.608 & $ 0.485 \sim a < x < b \sim 0.680$ \\ 
3 & (2,0,1) & $(\alpha,\alpha+\gamma)$ & 0.982 & $0.971 \sim 2a < x < a+b \sim 1.165$ \\
  & & $(2\alpha,\gamma)$ {\cellcolor{zluta}} & 0.680 & $x =  b$ \\ 
4 & (0,0,2) & $(\gamma,\gamma)$ & 0.698 & $0.680 \sim b < x < c \sim 0.812$ \\ 
  & (0,3,0) & $(\beta,2\beta)$ {\cellcolor{zluta}} & 0.812 & $x =  c$ \\ 
  & (3,1,0) & $(\alpha,2\alpha+\beta)$ &1.122 & $ 0.971 \sim 2a < x < a+b \sim 1.165$ \\
  & & $(2\alpha,\alpha+\beta)$ & 0.709 & $0.680 \sim b < x < c \sim 0.812$ \\ 
5 & (1,1,1) & $(\gamma,\alpha+\beta)$ {\cellcolor{zluta}} & 0.680 & $x = b$ \\
  & & $(\beta,\alpha+\gamma)$ & 0.836 & $0.812 \sim c < x < 2a \sim 0.971$ \\
  & & $(\alpha,\beta+\gamma)$ & 1.246 & $1.164 \sim a+b < x < a+c \sim 1.297$ \\ 
6 & (2,2,0) & $(\beta,2\alpha+\beta)$ {\cellcolor{zluta}} & 0.812 & $x = c$ \\
  & & $(\alpha,\alpha+2\beta)$ {\cellcolor{zluta}} & 1.359 & $x = 2b$ \\
  & & $(\alpha+\beta,\alpha+\beta)$ &0.608 & $0.485 \sim a < x < b \sim 0.680$ \\
  & & $(2\alpha,2\beta)$ &0.649 & $0.485 \sim a < x < b \sim 0.680$ \\ 
7 & (0,2,1) & $(\gamma,2\beta)$ {\cellcolor{zluta}} & 0.485 & $x = a$ \\
  & & $(\beta,\beta+\gamma)$ &0.693 & $0.680 \sim b < x < c \sim 0.812$ \\
  & (3,0,1) & $(\alpha,2\alpha+\gamma)$ &1.466 & $1.456 \sim 3a < x < b+c \sim 1.492$ \\
  & & $(2\alpha,\alpha+\gamma)$ &0.521 & $0.485 \sim a < x < b \sim 0.680$ \\

\end{tabu}
\caption[$(\alpha **)$-triangles for $\alpha = 2\pi/9$.]{$(\alpha **)$-triangles for $\alpha = 2\pi/9$. Approximate values of $a,b,c$ are $a\sim0.485$, $b\sim0.680$ and $c\sim0.812$.}\label{t:2pi_9_alfa}
\end{center}
\end{table}


It remains to find all realizable $(\beta **)$-triangles. Table~\ref{t:2pi_9_beta} lists the output of the program for $\alpha, \beta, \gamma, \beta, \alpha$.
We show that the $(\beta,\beta,2\alpha+\beta)$-triangle is not realizable. This might be a bit surprising, as the edges of this triangle have lengths $2b,2b$ and $2a+2b$. Suppose that $\T=(\beta,\beta,2\alpha+\beta)$ is realizable. Let $B,U,V$ be the vertices of $\T$ so that $BU$ and $UV$ are edges of length $2b$.
We may assume that $\T$ is corner-filling in the $\beta$-lune with vertex $B$.
Since $\T$ is realizable and $\beta$ is not a multiple of $\alpha$, we may assume that in a tiling of $\T$ by copies of $\T_0$, some tile is also a corner-filling tile of the $\beta$-lune.
It follows that the induced tiling of the edge $BU$ of length $2b$ by edges $a,b,c$ contains at least one edge $a$ or $c$.
However, it is easy to check that neither $2b-a\sim 0.874$ nor $2b-c\sim 0.547$ can be expressed as a nonnegative integer combination of $a,b,c$.

\begin{table}[Htb!]
\begin{center}
\begin{tabu}{ccllc}
$n$ & $(\emm_1,\emm_2,\emm_3)$ & $(\varphi, \psi)$& $x$ & best approximation for $x$ using $a,b,c$\\
\hline
\noalign{\vskip 1mm}

2 & (2,1,0) & $(\beta,2\alpha)$ {\cellcolor{zluta}} & 0.812 & $x = c$ \\ 
3 & (0,1,1) & $(\beta,\gamma)$ & 0.955 & $0.812 \sim c < x < 2a \sim 0.971$ \\ 
4 & (1,2,0) & $(\beta,\alpha+\beta)$ & 1.065 & $0.971 \sim 2a < x < a+b \sim 1.165$ \\ 
  & (4,0,0) & $(2\alpha,2\alpha)$ & 0.992 & $0.971 \sim 2a < x < a+b \sim 1.165$ \\ 
5 & (2,0,1) & $(2\alpha,\gamma)$ & 1.038 & $0.971 \sim 2a < x < a+b \sim 1.165$ \\ 
6 & (0,0,2) & $(\gamma,\gamma)$ & 1.047 & $0.971 \sim 2a < x < a+b \sim 1.165$ \\ 
  & (0,3,0) & $(\beta,2\beta)$ & 1.231 & $1.165 \sim a+b < x < a+c \sim 1.297$ \\ 
  & (3,1,0) & $(2\alpha,\alpha+\beta)$ & 1.065 & $0.971 \sim 2a < x < a+b \sim 1.165$ \\ 
7 & (1,1,1) & $(\gamma,\alpha+\beta)$ & 1.038 & $0.971 \sim 2a < x < a+b \sim 1.165$ \\
  & & $(\beta,\alpha+\gamma)$ & 1.298 & $1.297 \sim a+c < x < 2b \sim 1.359$ \\ 
8 & (2,2,0) & $(\beta,2\alpha+\beta)$ {\cellcolor{zluta}} & 1.359 & $x =  2b$ \\
  & & $(\alpha+\beta,\alpha+\beta)$ & 0.992 & $0.971 \sim 2a < x < a+b \sim 1.165$ \\
  & & $(2\alpha,2\beta)$ & 1.065 & $0.971 \sim 2a < x < a+b \sim 1.165$ \\ 
9& (0,2,1) & $(\gamma,2\beta)$ & 0.955 & $0.812 \sim c < x < 2a \sim 0.971$ \\
  & & $(\beta,\beta+\gamma)$ & 1.415 & $1.359 \sim 2b < x < 3a \sim 1.456$ \\ 
  & (3,0,1) & $(2\alpha,\alpha+\gamma)$ & 1.030 & $0.971 \sim 2a < x < a+b \sim 1.165$ \\ 
10& (1,0,2) & $(\gamma,\alpha+\gamma)$ & 0.860 & $0.812 \sim c < x < 2a \sim 0.971$ \\ 
  & (1,3,0) & $(\beta,\alpha+2\beta)$ & 1.469 & $1.456 \sim 3a < x < b+c \sim 1.492$ \\
  & & $(\alpha+\beta,2\beta)$ {\cellcolor{zluta}} & 0.812 & $x =  c$ \\ 
  & (4,1,0) & $(2\alpha,2\alpha+\beta)$ & 0.952 & $0.812 \sim c < x < 2a \sim 0.971$ \\ 
11& (2,1,1) & $(\gamma,2\alpha+\beta)$ {\cellcolor{zluta}} & 0.680 & $x = b$ \\
  & & $(\beta,2\alpha+\gamma)$ & 1.520 & $1.492 \sim b+c < x < 2c \sim 1.624$ \\
  & & $(\alpha+\beta,\alpha+\gamma)$ & 0.625 & $0.485 \sim a < x < b \sim 0.680$ \\
  & & $(2\alpha,\beta+\gamma)$ & 0.781 & $0.680 \sim b < x < c \sim 0.812$ \\

\end{tabu}
\caption[$(\beta \varphi \psi)$-triangles for $\alpha = 2\pi/9$.]{$(\beta \varphi \psi)$-triangles for $\alpha = 2\pi/9$ and $\varphi,\psi > \alpha$. Approximate values of $a,b,c$ are $a\sim0.485$, $b\sim0.680$ and $c\sim0.812$.}\label{t:2pi_9_beta}
\end{center}
\end{table}

Figure~\ref{fig_2pi-9}, right, shows that all the other candidates are realizable. Triangles $B'AD$ and $B'CA$ are realizable by Observation
\ref{obs_dlazdeni_cocky} since $B'HA$ is a copy of $BAH$.

This finishes the proof of Lemma~\ref{l_pi-4-5-9}.
\end{proof}

\begin{figure}
 \begin{center}
  \begin{minipage}{.29\textwidth}
  \begin{tabular}{l|l}
  \multicolumn{2}{c}{$(\alpha\varphi\psi)$-triangles} \\ \hline
   $(\alpha,\alpha,2\beta)$ & $ ADB$\\
    $(\alpha,2\alpha,\gamma) $ & $ ADE$\\
     $(\alpha,\beta,2\beta) $ & $ AFD$\\
      $(\alpha,\gamma,\alpha+\beta) $ & $ A'ED$\\
      $(\alpha,\beta,2\alpha+\beta) $ & $ A'BD$\\
      $(\alpha,\alpha,4\alpha) $ & $ AHD$ \\
      $(\alpha,\gamma,2\beta) $ & $ A'CB$
  \end{tabular}
  \end{minipage}
  \hspace{.4cm}
  \begin{minipage}{.4\textwidth}
   \includegraphics[scale=.9]{obrazky/sphere-troj.10} 
   \hspace{.4cm}
    \includegraphics[scale=.9]{obrazky/sphere-troj.12}
  \end{minipage}
  \hspace{-1cm}
   \begin{minipage}{.30\textwidth}
    \begin{tabular}{l|l}
  \multicolumn{2}{c}{$(\beta\varphi\psi)$-triangles} \\ \hline
   $(\beta,\beta,2\alpha)$ & $ BDA$\\
    $(\beta,\alpha+\beta,2\beta) $ & $ B'AD$\\
      $(\beta,\gamma,2\alpha+\beta) $ & $ B'CA$
  \end{tabular}
      \end{minipage}
      \bigskip
      
      \includegraphics{obrazky/popisky.2} 
  \caption{A tiling of $(\alpha**)$-triangles (left) and $(\beta**)$-triangles (right) for $\alpha = \frac{2\pi}{9}$.}
\label{fig_2pi-9}
 \end{center}
\end{figure}

The following lemma extends Lemma~\ref{lem_partial_classification_cox_2_3_alpha_edges}.

\begin{lemma}\label{lem_alfa_beta_diagramy}
 Let $\T_0 = (\alpha\beta\gamma)$, where $\alpha < \beta < \gamma$ and $\beta = \pi/3$. Then the $\alpha$-edges and $\beta$-edges of $S$ together form a subgraph isomorphic to one of the six
 graphs in Figure~\ref{fig_alfa_beta_diagramy}.
\end{lemma}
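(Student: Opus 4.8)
The plan is to pin down the subgraph $G := H_{\alpha}\cup H_{\beta}$ of $c(S)$ formed by all $\alpha$- and $\beta$-edges, starting from the single structural fact that $G$ is \emph{triangle-free}. This holds because $c(S)$ is a complete graph on five vertices whose ten triangles are exactly the ten edge-angles of $S$, all of which are realizable (hence valid spherical triangles); but since $\alpha<\beta=\pi/3$, each of the types $(\alpha\alpha\alpha)$, $(\alpha\alpha\beta)$, $(\alpha\beta\beta)$, $(\beta\beta\beta)$ has angle sum at most $3\beta=\pi$ and so violates Lemma~\ref{lemma_fact}(a). Thus no triple of $c(S)$ can carry three $G$-edges, i.e.\ $G$ is triangle-free.

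Next I would combine this with what is already known about $H_{\alpha}$: by Lemma~\ref{lem_partial_classification_cox_2_3_alpha_edges}, $H_{\alpha}\cong P_2+P_2$ or $H_{\alpha}\cong P_2+P_3$, and since $c(S)$ is $(\alpha\beta\gamma)$-rich it has at least four $(\alpha\beta\gamma)$-triangles and therefore at least two $\beta$-edges. In the case $H_{\alpha}\cong P_2+P_3$, writing the $P_3$ as $x-w-y$, the triple $\{x,w,y\}$ is of type $(\alpha\alpha*)$, and by Lemma~\ref{l_pi-4-5-9} a realizable $(\alpha\alpha*)$-triangle has $*\in\{2\beta,4\alpha\}$; in particular the chord $xy$ is not a $\beta$-edge, so $H_{\beta}$ is contained in the triangle-free graph $K_{2,3}$ on the remaining six non-$\alpha$-edges. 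In both cases the task thus reduces to a short finite enumeration: list, up to isomorphism, all triangle-free graphs on five vertices that contain the fixed copy of $H_{\alpha}$ and at least two further edges.

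Finally, each candidate $G$ from this enumeration must be tested against $(\alpha\beta\gamma)$-richness. By Debrunner's lemma (Lemma~\ref{lemma_debrunner}), every symmetry of $c(S)$ is a label-preserving automorphism of $c(S)$, hence an automorphism of $G$ preserving the $\alpha/\beta$-colouring; I would then argue, exactly as in the proofs of Lemmas~\ref{lem_partial_classification_cox_2_3_alpha_edges} and~\ref{lem_abc_aa2b_aa+bc}, that a candidate $G$ is impossible whenever every way of assigning labels to the non-$G$ edges (subject to all ten triangles being realizable, using the lists of Lemma~\ref{l_pi-4-5-9}) forces the $(\alpha\beta\gamma)$-triangles into at most three orbits --- for instance because $G$ has fewer than four pairs consisting of an $\alpha$-edge and a $\beta$-edge sharing a vertex, or because an unavoidable symmetry of $G$ already merges them. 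The graphs that survive all these eliminations are precisely the six shown in Figure~\ref{fig_alfa_beta_diagramy}.

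The main obstacle is the bookkeeping in this last step: one has to run through all triangle-free supergraphs of the two possible $H_{\alpha}$'s and, for each, decide existence of a legal rich completion using the (value-of-$\alpha$ dependent) realizability data of Lemma~\ref{l_pi-4-5-9}; the triangle-freeness of $G$ established at the outset is exactly what keeps this case analysis short enough to carry out by hand.
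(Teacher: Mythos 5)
Your proposal follows essentially the same route as the paper's proof: triangle-freeness of the combined $\alpha$/$\beta$-subgraph via the spherical angle-sum condition (Lemma~\ref{lemma_fact}(a)), the prior classification of $H_{\alpha}$ from Lemma~\ref{lem_partial_classification_cox_2_3_alpha_edges}, the requirement of at least four induced $\alpha\beta$-paths forced by $(\alpha\beta\gamma)$-richness, and symmetry-based exclusions via Debrunner's lemma. The differences are cosmetic --- the paper organizes the enumeration by $|E_{\beta}|$ (invoking Tur\'an's theorem for $|E_{\beta}|\ge 4$) and does not need Lemma~\ref{l_pi-4-5-9} to exclude the chord of the $P_3$, since triangle-freeness already does that --- and your sketch leaves the final finite enumeration to be carried out, but the method is sound and yields the six graphs of Figure~\ref{fig_alfa_beta_diagramy}.
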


\begin{figure}
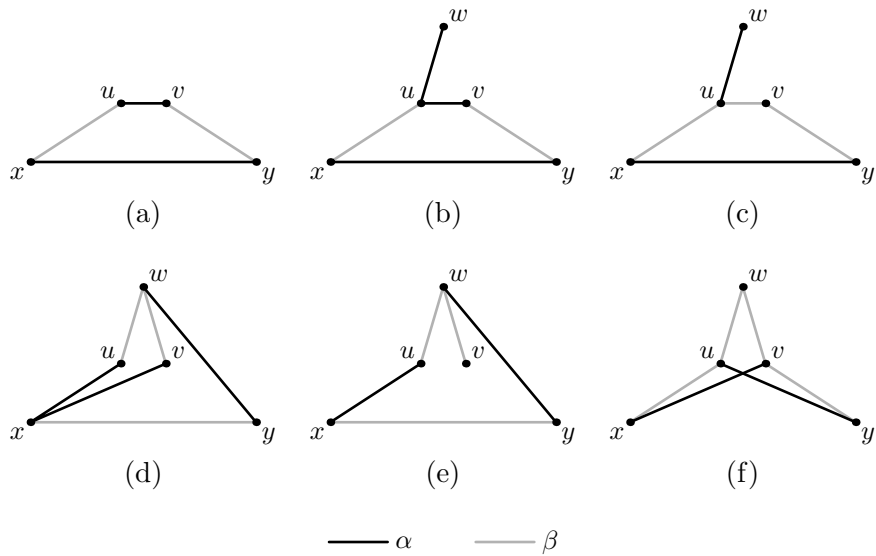

 \begin{center}
 \begin{tabular}{ccc}
    \includegraphics{obrazky/coxeter.100} & \includegraphics{obrazky/coxeter.101} & \includegraphics{obrazky/coxeter.102} \\
    (a) & (b) & (c) \\
    & & \\
    \includegraphics{obrazky/coxeter.103} & \includegraphics{obrazky/coxeter.104} & \includegraphics{obrazky/coxeter.105} \\
     (d) & (e) & (f) \\
       & & \\
     &  \includegraphics{obrazky/popisky.200} & \\
 \end{tabular}
  \caption{$\alpha$-edges and $\beta$-edges of $S$ for $\beta = \pi/3$.}
\label{fig_alfa_beta_diagramy}
 \end{center}
\end{figure}

\begin{proof}
Let $H=c(S)$ be the Coxeter diagram of $S$. Let $V(H)=\{u,v,w,x,y\}$. Let $H_{\alpha}=(V(H),E_{\alpha})$ and $H_{\beta}=(V(H),E_{\beta})$ be the subgraphs of $H$ formed by the $\alpha$-edges and $\beta$-edges, respectively. Let $H_{\alpha\beta}$
be the edge-labeled subgraph of $H$ formed by the $\alpha$-edges and the $\beta$-edges. Since $H$ is $(\alpha\beta\gamma)$-rich, we have $\lvert E_{\beta}\rvert \ge 2$. Furthermore, $H_{\alpha\beta}$ contains at least four induced \emph{$\alpha\beta$-paths}, that is, induced paths of length $2$ consisting of an $\alpha$-edge and a $\beta$-edge. Moreover, $H_{\alpha\beta}$ is triangle-free: indeed, by Lemma~\ref{lemma_fact}(a), there are no spherical triangles of type $(\alpha\alpha\alpha)$, $(\alpha\alpha\beta)$, $(\alpha\beta\beta)$ or $(\beta\beta\beta)$ for $\alpha<\beta=\pi/3$.
By Lemma~\ref{lem_partial_classification_cox_2_3_alpha_edges}, $H_{\alpha}$ is isomorphic to  $P_2 + P_2$ or $P_2 + P_3$.  

Suppose that $|E_{\beta}|= 2$. Then $H_{\beta}$ is a matching by the same argument as in the case $|E_{\alpha}| = 2$ in the proof of Lemma~\ref{lem_partial_classification_cox_2_3_alpha_edges}. Let $E_{\beta}=\{xu,yv\}$. If $H_{\alpha}$ also forms a matching, then to get four $\alpha\beta$-paths, $H_{\alpha\beta}$ must be isomorphic to an alternating $4$-cycle; see Figure~\ref{fig_alfa_beta_diagramy}(a). If $H_{\alpha}$ is isomorphic to $P_2 + P_3$, there are three possible non-isomorphic graphs $H_{\alpha\beta}$, determined by the choice of $E_{\alpha}$:
\begin{enumerate}
\item $E_{\alpha}=\{xy,yu,vw\}$. In this case there are only three induced $\alpha\beta$-paths, all containing the $\beta$-edge $yv$.
\item $E_{\alpha}=\{xy,uw,vw\}$. In this case there are exactly four $\alpha\beta$-paths, forcing $xw, yw, xv, yu$ to be $\gamma$-edges. Due to the symmetry 
simultaneously exchanging $u$ with $v$ and $x$ with $y$, the $(\alpha\beta\gamma)$-triangles form only two orbits; a contradiction. 
\item $E_{\alpha}=\{xy,uv,uw\}$. The graph $H_{\alpha\beta}$ is drawn in Figure~\ref{fig_alfa_beta_diagramy}(b).
\end{enumerate}

Suppose that $|E_{\beta}|= 3$. If $H_{\beta}$ is isomorphic to the star $K_{1,3}$, then there are at most three $(\alpha\beta\gamma)$-triangles. Also, $H_{\beta}$ cannot form a triangle as $H_{\alpha\beta}$ is triangle-free.

Suppose that $H_{\beta}$ is isomorphic to the path $P_4$. Let $E_{\beta}=\{xu,uv,vy\}$. There is no triangle-free extension by $\alpha$-edges where $H_{\alpha}$ is isomorphic to $P_2 + P_3$, and exactly one, up to isomorphism, where $H_{\alpha}$ is a matching; see Figure~\ref{fig_alfa_beta_diagramy}(c).

Suppose that $H_{\beta}$ is isomorphic to $P_2+P_3$. Let $E_\beta=\{uw,vw,xy\}$. Up to isomorphism, there are three triangle-free extensions of $H_{\beta}$ by $\alpha$-edges: $E_{\alpha}=\{xu,yv\}$, $E_{\alpha}=\{xu,yw\}$ and $E_{\alpha}=\{xu,xv,yw\}$. If $E_\alpha = \{xu,yv\}$, there are exactly four $\alpha\beta$-paths, forcing $xw, yw, xv, yu$ to be $\gamma$-edges. Due to the symmetry 
simultaneously exchanging $u$ with $v$ and $x$ with $y$, the $(\alpha\beta\gamma)$-triangles form only two orbits; a contradiction. The other two cases are displayed in Figure~\ref{fig_alfa_beta_diagramy}(d), (e).

Suppose that $|E_{\beta}| \ge 4$. Then by Tur\'an's theorem, $|E_{\beta}|= 4$, $|E_{\alpha}| = 2$ and $H_{\alpha\beta}$ is isomorphic to $K_{2,3}$. Since $H_{\alpha}$ is a matching, there is just one possibility for $H_{\alpha\beta}$ up to isomorphism; see Figure~\ref{fig_alfa_beta_diagramy}(f).
\end{proof}

Now we finish the description of all possible Coxeter diagrams of $S$.

\begin{lemma}\label{lem_abc_rich_pi4_pi_5_2pi_9}
Let $\gamma=\pi/2$ and $\beta=\pi/3$. For $\alpha = \pi/4$ and $\alpha=\pi/5$, Figures~\ref{fig_cox-pi-4} and~\ref{fig_cox-pi-5}, respectively, show all $(\alpha\beta\gamma)$-rich Coxeter diagrams with five vertices whose all triangles of type $(\alpha**)$ and $(\beta**)$ are listed in Lemma~\ref{l_pi-4-5-9}. For $\alpha = 2\pi/9$, no such diagram exists.
\end{lemma}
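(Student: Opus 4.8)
The plan is a finite but sizable case analysis resting on Lemma~\ref{lem_alfa_beta_diagramy}. By that lemma, the edge-labeled subgraph $H_{\alpha\beta}$ of $c(S)$ spanned by the $\alpha$- and $\beta$-edges is isomorphic to one of the six graphs of Figure~\ref{fig_alfa_beta_diagramy}. I would fix one of these base graphs together with one value $\alpha\in\{\pi/4,\,2\pi/9,\,\pi/5\}$ and determine all ways to label the remaining $10-|E_\alpha|-|E_\beta|$ edges of $K_5$ so that the resulting diagram still satisfies the hypothesis of the lemma (every triangle of type $(\alpha**)$ or $(\beta**)$ is $\T_0$ or one of the triangles listed for this $\alpha$ in Lemma~\ref{l_pi-4-5-9}) and is $(\alpha\beta\gamma)$-rich.

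The engine of the argument is a local propagation rule. If $pq$ is an $\alpha$-edge and $r$ is any third vertex, then $pqr$ is a triangle of type $(\alpha**)$, hence by hypothesis it equals $\T_0$ or one of the listed $(\alpha**)$-triangles, and this confines the unordered pair $\{\beta_{p,r},\beta_{q,r}\}$ to a short explicit list. The same applies to the $\beta$-edges via the $(\beta**)$-triangles. Imposing these conditions on every triangle through an $\alpha$- or $\beta$-edge forces each still-unlabeled edge into a small set of admissible labels --- among $\gamma=\pi/2$ and the few larger angles ($2\beta$, $3\alpha$, $\alpha+\beta$, $2\alpha+\beta$, \dots) that occur in the realizability lists --- usually a single value, at worst two or three. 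One then enumerates the finitely many globally consistent completions. The restrictiveness of Lemma~\ref{l_pi-4-5-9} is what makes this feasible: for $\alpha=2\pi/9$, for instance, only three $(\beta**)$-triangles besides $\T_0$ exist, which kills almost every branch at once, occasionally with the help of Lemma~\ref{lemma_fact}(b) to rule out a forced but non-existent triangle.

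For each surviving completion I would compute the automorphism group of the labeled diagram and, by Debrunner's lemma (Lemma~\ref{lemma_debrunner}), read off the symmetry group of $S$, then count the orbits of the copies of $T_0$ under this group. A completion is kept precisely when it has at least four such orbits. For $\alpha=2\pi/9$ one checks that every completion of every base graph is either already inconsistent or has at most three orbits of $(\alpha\beta\gamma)$-triangles, so no diagram remains. For $\alpha=\pi/4$ and $\alpha=\pi/5$ the completions that pass both the realizability and the richness test are exactly the diagrams drawn in Figures~\ref{fig_cox-pi-4} and~\ref{fig_cox-pi-5}.

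The main obstacle is not any single hard step but the bookkeeping: six base graphs times three angles, with branching on the label of every remaining edge and an orbit count at each leaf. Making it manageable comes down to two habits: always propagate from an $\alpha$- or $\beta$-edge that is adjacent to an as-yet-unlabeled edge, so that the lists of Lemma~\ref{l_pi-4-5-9} do most of the pruning; and be scrupulous about the orbit counts, since the $(\alpha\beta\gamma)$-richness hypothesis enters the argument only through them.
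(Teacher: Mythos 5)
Your plan is essentially the paper's own proof: the paper also runs through the six base graphs of Lemma~\ref{lem_alfa_beta_diagramy} for each of the three values of $\alpha$, forces the labels of the remaining edges by requiring every triangle through an $\alpha$- or $\beta$-edge to be one of the realizable types from Lemma~\ref{l_pi-4-5-9}, and discards any completion whose symmetry group leaves fewer than four orbits of $(\alpha\beta\gamma)$-triangles. So the decomposition, the propagation rule, and the richness test are all the same.

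One ingredient of the paper's argument is missing from your description, and without it the enumeration does not close. Several branches are killed not by a triangle through an $\alpha$- or $\beta$-edge but by a forced triangle all of whose labels lie in $\{\gamma,2\alpha,3\alpha,2\beta,4\alpha\}$: for instance, in the case $\alpha=\pi/4$ with $E_\alpha=\{uv,uw,xy\}$, propagation from the $(\alpha**)$- and $(\beta**)$-lists forces a $(\gamma,\gamma,2\beta)$-triangle, and for $\alpha=\pi/5$ one similarly runs into $(\gamma,\gamma,2\beta)$, $(\gamma,\gamma,4\alpha)$ and $(\gamma,2\alpha,3\alpha)$. These triangles satisfy the spherical triangle inequality, so Lemma~\ref{lemma_fact}(b) does not dispose of them; the paper rules them out by a separate computation showing (via the spherical law of cosines) that one of their edge lengths is not a nonnegative integer combination of the edge lengths $a,b,c$ of $\T_0$, hence they are not realizable. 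Your propagation engine, restricted to the lists of Lemma~\ref{l_pi-4-5-9} plus Lemma~\ref{lemma_fact}(b), would let these completions survive and you would end up with more diagrams than Figures~\ref{fig_cox-pi-4} and~\ref{fig_cox-pi-5} show. You need to add these ad hoc non-realizability checks for the handful of $(\gamma**)$-triangles that arise (this also quietly uses that \emph{every} triangle of $c(S)$, not only those of type $(\alpha**)$ or $(\beta**)$, must be realizable, since $\T_0$ is the unique indivisible edge-angle).
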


\begin{proof}
 Let $H$ be a Coxeter diagram satisfying the assumptions of the lemma. Let $V(H)=\{u,\allowbreak v,\allowbreak w,\allowbreak x,\allowbreak y\}$. Let $E_{\alpha}$ be the set of $\alpha$-edges and $E_{\beta}$ the set of $\beta$-edges of $H$. For each of the three values of $\alpha$, we consider the six cases of diagrams given by Lemma~\ref{lem_alfa_beta_diagramy}. We refer to these cases by (a)--(f) according to Figure~\ref{fig_alfa_beta_diagramy}.

First we exclude case (f) for all three values of $\alpha$. Suppose that $E_\alpha = \{xv,yu\}$ and $E_\beta=\{xu,uw,wv,vy\}$. Since $H$ is $(\alpha\beta\gamma)$-rich, the edges $xy$ and $uv$ must be $\gamma$-edges. This gives a $(\beta\beta\gamma)$-triangle $uvw$, which is realizable only for $\alpha=\pi/4$. The remaining edges $xw$ and $yw$ are also $\gamma$-edges since $\varphi=\gamma$ is the only angle such that the $(\beta\beta\varphi)$-triangle and the $(\alpha\beta\varphi)$-triangle are both realizable for $\alpha=\pi/4$. But then due to the symmetry exchanging simultaneously $u$ with $v$ and $x$ with $y$, the $(\alpha\beta\gamma)$-triangles form at most three orbits; a contradiction.

Now we continue separately for each value of $\alpha$.
 
 1) $\alpha = \pi/4$. Recall that by the spherical law of cosines (Lemma~\ref{lemma_fact}(c)), we have, approximately, $a\sim 0.615$, $b=\pi/4\sim 0.785$ and $c\sim 0.955$. The triangle of type $(\pi/2,\pi/2,2\beta)$ is not realizable since its longest edge has length $2\pi/3 \sim 2.094$, which cannot be obtained as a nonnegative integer combination of $a,b,c$. 
\begin{figure}
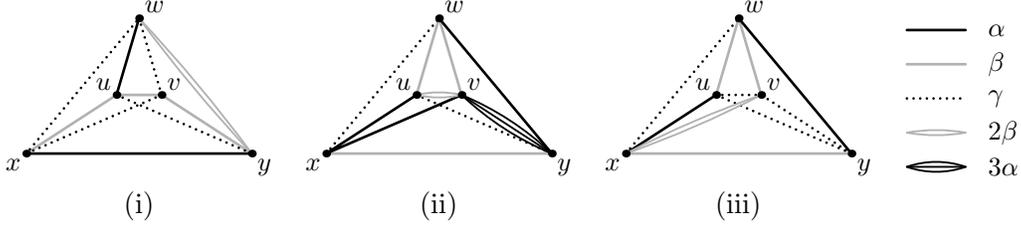

 \begin{center}
  \begin{tabular}{cccc} 
     \includegraphics{obrazky/coxeter.400} &   \includegraphics{obrazky/coxeter.401} &    \includegraphics{obrazky/coxeter.402} & \includegraphics{obrazky/popisky.110}\\
    (i) & (ii) & (iii) & \\
  \end{tabular}
  \caption{Coxeter diagrams for $\alpha = \frac{\pi}{4}$.}
\label{fig_cox-pi-4}
 \end{center}
\end{figure}

(a) $E_\alpha = \{uv,xy\}$, $E_\beta = \{xu,yv\}$. Since $H$ is $(\alpha\beta\gamma)$-rich, the diagonals $xv$ and $uy$ are $\gamma$-edges. Since $(\alpha\gamma\gamma)$ and $(\alpha,\gamma,2\beta)$ are the only realizable $(\alpha\varphi\psi)$-triangles for $\varphi,\psi \notin \{\alpha,\beta\}$, it follows that all edges incident with $w$ are of type $\gamma$ or $2\beta$. Since the $(\beta\gamma\gamma)$-triangle is not realizable, one of the edges $uw$, $xw$ is of type $2\beta$. We can assume without loss of generality that $uw$ is of type $2\beta$. Then, necessarily, $xw$ and $vw$ are of type $\gamma$ and $yw$ is of type $2\beta$.
The resulting diagram has a symmetry group generated by the transpositions $(x,v)$ and $(u,y)$, so it has at most two orbits of $(\alpha\beta\gamma)$-triangles. 

(b) $E_\alpha = \{uv,uw,xy\}$, $E_\beta = \{xu,yv\}$. Like in case (a), the edges $xv$ and $uy$ are of type $\gamma$. The types of the edges $vw$ and $yw$ are now uniquely determined: $vw$ has type $2\beta$ and $yw$ has type $\gamma$. If $\varphi$ is the type of $xw$ then the $(\alpha\beta\varphi)$-triangle and the $(\alpha\gamma\varphi)$-triangle are realizable, hence $\varphi=\gamma$. But then the triangle $xvw$ is a $(\gamma,\gamma,2\beta)$-triangle, which is not realizable.

(c) $E_\alpha = \{uw,xy\}$, $E_\beta = \{xu,uv,vy\}$. Since $H$ is $(\alpha\beta\gamma)$-rich, the edges $xw,wv,vx$ and $uy$ have type $\gamma$. The type of the remaining edge $wy$ is uniquely determined: the only value of $\varphi\neq\alpha,\beta$ for which the $(\alpha\gamma\varphi)$-triangle and the $(\beta\gamma\varphi)$-triangle are both realizable is $2\beta$. The diagram is drawn in Figure~\ref{fig_cox-pi-4}(i).

(d) $E_\alpha = \{ux,xv,yw\}, E_\beta=\{uw,wv,xy\}$. The only possible type of $uv$ is $2\beta$. Since the $(\gamma,\gamma,2\beta)$-triangle is not realizable, it follows that at most one of the edges $uy,vy$ has type $\gamma$. Since $H$ contains at least two $\gamma$-edges, we can assume that $uy$ and $xw$ have type $\gamma$. The only possible type of $vy$ is then $3\alpha$. See Figure~\ref{fig_cox-pi-4}(ii). We note that the $(\gamma,2\beta,3\alpha)$-triangle is realizable.

(e) $E_\alpha = \{ux,yw\}, E_\beta=\{uw,wv,xy\}$. Since $H$ is $(\alpha\beta\gamma)$-rich, the edges $xw$ and $uy$ have type $\gamma$. Denote the types of $uv, xv, yv$ by $\varphi, \psi, \omega$, respectively. 
Since $\varphi,\psi, \omega \notin \{\alpha,\beta\}$, the triangles of type $(\beta\beta\varphi)$, $(\alpha\varphi\psi)$ and $(\beta\gamma\psi)$ are all realizable only if $\varphi = \gamma$ and $\psi = 2\beta$. The triangles of type $(\beta,2\beta,\omega)$ and $(\alpha\beta\omega)$ are realizable only if $\omega=\gamma$. See Figure~\ref{fig_cox-pi-4}(iii).%

\medskip
Before continuing with $\alpha = \pi/5$, we exclude cases (c) and (e) for both $\alpha = \pi/5$ and $\alpha =2\pi/9$ simultaneously, since the arguments are the same.

(c) $E_\alpha = \{uw,xy\}$, $E_\beta = \{xu,uv,vy\}$. Since the $(\beta\beta\gamma)$-triangle is not realizable, there are at most two $(\alpha\beta\gamma)$-triangles, $uvw$ and $xuw$.

(e) $E_\alpha = \{ux,yw\}, E_\beta=\{uw,wv,xy\}$. Since $H$ is $(\alpha\beta\gamma)$-rich, the edges $xw$ and $uy$ have type $\gamma$. Denote the type of $uv$ by $\varphi$ and the type of $xv$ by $\psi$. Since the $(\beta\beta\varphi)$-triangle is realizable, we have $\varphi \in \{2\alpha,4\alpha\}$. But there is no $\psi\notin \{\alpha,\beta\}$ such that the triangles of types $(\alpha\varphi\psi)$ and $(\beta\gamma\psi)$ are both realizable.


\medskip
2) $\alpha = \pi/5$. Recall that by the spherical law of cosines (Lemma~\ref{lemma_fact}(c)), we have, approximately, $a\sim 0.365$, $b\sim 0.554$ and $c\sim 0.652$.
The spherical triangles of type $(\pi/2,\pi/2,4\alpha)$, $(\pi/2,\pi/2,2\beta)$ and $(\pi/2,3\alpha,2\alpha)$ are not realizable since they have an edge of length $4\pi/5 \sim 2.513$, $2\pi/3 \sim 2.094$ or $\arccos((1-2/\sqrt{5})^{1/2})\sim1.240$, respectively, which cannot be obtained as nonnegative integer combinations of $a,b,c$.
A spherical triangle of type $(\pi/2, 4\alpha, 4\alpha)$ does not exists by Lemma~\ref{lemma_fact}(b). 

\begin{figure}
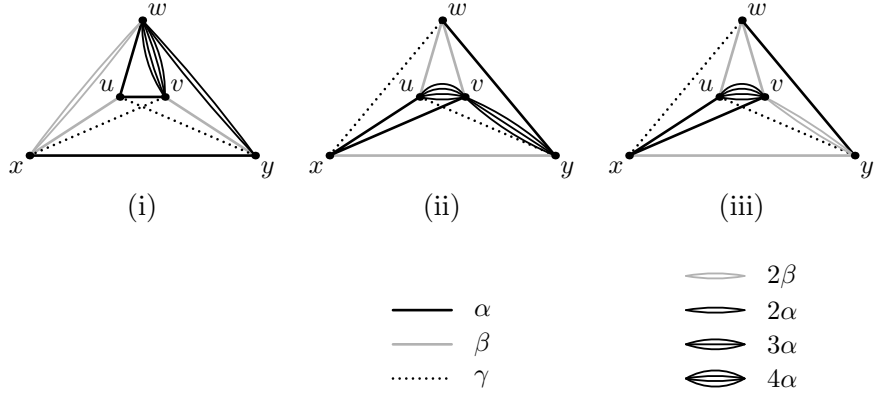

 \begin{center}
  \begin{tabular}{ccc} 
     \includegraphics{obrazky/coxeter.501} &
   \includegraphics{obrazky/coxeter.502} &
    \includegraphics{obrazky/coxeter.503} \\
    (i) & (ii) & (iii) \\
    & & \\
     & \includegraphics{obrazky/popisky.112} & \includegraphics{obrazky/popisky.111} \\
   
  \end{tabular}
  \caption{Coxeter diagrams for $\alpha = \frac{\pi}{5}$.}
\label{fig_cox-pi-5}
 \end{center}
\end{figure}

(a) $E_\alpha = \{uv,xy\}$, $E_\beta = \{xu,yv\}$. Since $H$ is $(\alpha\beta\gamma)$-rich, the diagonals $xv$ and $uy$ are $\gamma$-edges. Let $\varphi, \psi,\omega,\tau$ be the types of $wx,wu,wv,wy$, respectively. Since the $(\alpha\psi\omega)$-triangle and the $(\alpha\varphi\tau)$-triangle have to be realizable and since $\varphi, \psi,\omega,\tau \notin \{\alpha,\beta\}$, for each of the pairs $\{\psi,\omega\}$ and $\{\varphi,\tau\}$ we have the following four options: $\{2\alpha,\gamma\},\{2\alpha,2\beta\},\{\gamma,3\alpha\}$ or $\{\gamma,2\beta\}$. Since the $(\beta\varphi\psi)$-triangle and the $(\beta\omega\tau)$-triangle have to be realizable and since $\varphi, \psi,\omega,\tau \neq 4\alpha$, for each of the pairs $\{\varphi,\psi\}$ and $\{\omega,\tau\}$ we have the following four options: $\{2\alpha,\gamma\},\{2\alpha,3\alpha\},\{\gamma,3\alpha\}$ or $\{3\alpha,2\beta\}$.

Since there are just four $(\alpha\beta\gamma)$-triangles in $H$, the only symmetry of $H$ is the identity, and so $\{\varphi,\psi\} \neq \{\omega,\tau\}.$ Hence, up to symmetry, the $4$-tuple $(\varphi, \psi,\omega,\tau)$ is equal either to $(\pi/2, 2\alpha,\pi/2,3\alpha)$ or $(\pi/2, 2\alpha,2\beta,3\alpha)$. In both cases, the triangle $uyw$ has type $(\pi/2,2\alpha,3\alpha)$, which is not realizable.

(b)  $E_\alpha = \{uv,uw,xy\}$, $E_\beta = \{xu,yv\}$. Like in case (a), the edges $xv$ and $uy$ are of type $\gamma$. Let $\varphi, \psi,\omega$ be the types of $wv,wy,wx$, respectively. Since $\varphi, \psi,\omega \notin \{\alpha,\beta\}$ and the $(\alpha\alpha\varphi)$-triangle must be realizable, we have $\varphi \in \{2\beta,4\alpha\}$. Suppose that $\varphi=2\beta$. Since the 
$(\beta\varphi\psi)$-triangle must be realizable, we have $\psi = 3\alpha$. Since the $(\alpha\psi\omega)$-triangle must be realizable, it follows that $\omega=\pi/2$. But now the triangle $xvw$ is of type $(\pi/2,\pi/2,2\beta)$, which is not realizable; a contradiction. Thus we are left with the option $\varphi = 4\alpha$. Since the $(\alpha\gamma\psi)$-triangle and the $(\beta\varphi\psi)$-triangle must be realizable, we have $\psi=2\alpha$. Since the $(\alpha\psi\omega)$-triangle must be realizable, it follows that $\omega\in \{\pi/2,2\beta\}$.
If $\omega = \pi/2$, the triangle $xvw$ has type $(\pi/2,\pi/2,4\alpha)$, which is not realizable. Therefore, $\omega = 2\beta$. See Figure~\ref{fig_cox-pi-5}(i).

(d) $E_\alpha = \{ux,xv,yw\}, E_\beta=\{uw,wv,xy\}$. The only possible type of $uv$ is $4\alpha$. Since the $(\pi/2,\pi/2,4\alpha)$-triangle is not realizable, it follows that at most one of the edges $uy,vy$ is of type $\gamma$. Since $H$ contains at least two $\gamma$-edges, we can assume that $uy$ and $xw$ are of type $\gamma$. Since the $(\pi/2,4\alpha,4\alpha)$-triangle does not exist, the type of $vy$ is either $3\alpha$ or $2\beta$; see Figure~\ref{fig_cox-pi-5}(ii), (iii).

\medskip 
3)  $\alpha = 2\pi/9$. Recall that by the spherical law of cosines (Lemma~\ref{lemma_fact}(c)), we have, approximately, $a\sim 0.485$, $b\sim 0.680$ and $c\sim 0.812$.
The spherical triangle of type $(\pi/2,\pi/2,2\beta)$ is not realizable since it has an edge of length  $2\pi/3 \sim 2.0944$, which cannot be obtained as a nonnegative integer combination of $a,b,c$.
We show that there is no $H$ satisfying the required conditions.

(a) $E_\alpha = \{uv,xy\}$, $E_\beta = \{xu,yv\}$. 
It is straightforward to check that there are no $\varphi, \psi,\omega, \tau \notin \{\alpha,\beta\}$ such that the triangles of types $(\alpha\psi\omega), (\alpha\varphi\tau), (\beta\varphi\psi), (\beta\omega\tau)$ are all realizable.

(b)  $E_\alpha = \{uv,uw,xy\}$, $E_\beta = \{xu,yv\}$. Since $H$ is $(\alpha\beta\gamma)$-rich, the edges $xv$ and $uy$ are of type $\gamma$. Since there is no realizable $(\beta, 4\alpha, *)$-triangle, the edge $vw$ is of type $2\beta$. Consequently, the edge $yw$ is of type $\alpha + \beta$. Further it follows that $xw$ is a $\gamma$-edge, hence the triangle $xvw$ is of type $(\pi/2,\pi/2,2\beta)$, which is not realizable.

(d) $E_\alpha = \{ux,xv,yw\}, E_\beta=\{uw,wv,xy\}$. There is no $\varphi$ such that the $(\alpha\alpha\varphi)$-triangle and the $(\beta\beta\varphi)$-triangle are both realizable.
\end{proof}

We finish the proof 
using Fiedler's theorem. Compared to Case a), it is now easier to compute the determinants of the matrices corresponding to the Coxeter diagrams from Figures~\ref{fig_cox-pi-4} and~\ref{fig_cox-pi-5}, since we know precise values of all the entries. 

We now list matrices $B_1,B_2, B_3$ corresponding to the diagrams in Figure~\ref{fig_cox-pi-4} and matrices $C_1,C_2,C_3$ corresponding to the diagrams in Figure~\ref{fig_cox-pi-5}. 
Again, the rows and columns of the matrices are indexed by $u,v,w,x,y$, in this order. 

%
%
%
%
%

\[
B_1=
\left(\begin{array}{rrrrr}
-1 & 0.5 & \sqrt{2}/2 & 0.5  & 0 \\
0.5  & -1 & 0 & 0 & 0.5  \\
\sqrt{2}/2 & 0 & -1 & 0 & -0.5  \\
0.5  & 0 & 0 & -1 & \sqrt{2}/2 \\
0 & 0.5  & -0.5  & \sqrt{2}/2 & -1
\end{array}\right)
\]

\[
 B_2=
\left(\begin{array}{rrrrr}
-1 & -0.5 & 0.5 & \sqrt{2}/2 & 0 \\
-0.5 & -1 & 0.5 & \sqrt{2}/2 & -\sqrt{2}/2 \\
0.5 & 0.5 & -1 & 0 & \sqrt{2}/2 \\
\sqrt{2}/2 & \sqrt{2}/2 & 0 & -1& 0.5 \\
0 & -\sqrt{2}/2 & \sqrt{2}/2 & 0.5 & -1
\end{array}\right)
\]

\[ 
B_3 = 
\left(\begin{array}{rrrrr}
-1 & 0 & 0.5 & \sqrt{2}/2 & 0 \\
0 & -1 & 0.5 & -0.5 & 0 \\
0.5 & 0.5 & -1 & 0 & \sqrt{2}/2 \\
\sqrt{2}/2 & -0.5 & 0 & -1 & 0.5 \\
0 & 0 & \sqrt{2}/2 & 0.5 & -1
\end{array}\right)
\]

\[ 
C_1 = 
\left(\begin{array}{rrrrr}
-1 & \frac{1}{4}(\sqrt{5}+1) & \frac{1}{4}(\sqrt{5}+1) & 0.5 & 0 \\
\frac{1}{4}(\sqrt{5}+1) & -1 & -\frac{1}{4}(\sqrt{5}+1) & 0 & 0.5 \\
\frac{1}{4}(\sqrt{5}+1) & -\frac{1}{4}(\sqrt{5}+1) & -1 & -0.5 & \frac{1}{4}(\sqrt{5}-1) \\
0.5 & 0 & -0.5 & -1 & \frac{1}{4}(\sqrt{5}+1) \\
0 & 0.5 & \frac{1}{4}(\sqrt{5}-1) & \frac{1}{4}(\sqrt{5}+1) & -1
\end{array}\right)
\]

\[C_2 = 
\left(\begin{array}{rrrrr}
-1 & -\frac{1}{4}(\sqrt{5}+1) & 0.5 & \frac{1}{4}(\sqrt{5}+1) & 0 \\
-\frac{1}{4}(\sqrt{5}+1) & -1 & 0.5 & \frac{1}{4}(\sqrt{5}+1) & -\frac{1}{4}(\sqrt{5}-1) \\
0.5 & 0.5 & -1 & 0 & \frac{1}{4}(\sqrt{5}+1) \\
\frac{1}{4}(\sqrt{5}+1) & \frac{1}{4}(\sqrt{5}+1) & 0 & -1 & 0.5 \\
0 & -\frac{1}{4}(\sqrt{5}-1) & \frac{1}{4}(\sqrt{5}+1) & 0.5 & -1
\end{array}\right)
\]

\[ 
C_3 = 
\left(\begin{array}{rrrrr}
-1 & -\frac{1}{4}(\sqrt{5}+1) & 0.5 & \frac{1}{4}(\sqrt{5}+1) & 0 \\
-\frac{1}{4}(\sqrt{5}+1) & -1 & 0.5 & \frac{1}{4}(\sqrt{5}+1) & -0.5 \\
0.5 & 0.5 & -1 & 0 & \frac{1}{4}(\sqrt{5}+1) \\
\frac{1}{4}(\sqrt{5}+1) & \frac{1}{4}(\sqrt{5}+1) & 0 & -1 & 0.5 \\
0 & -0.5 & \frac{1}{4}(\sqrt{5}+1) & 0.5 & -1
\end{array}\right)
\]

The matrices have the following determinants, rounded to two decimal places:

\begin{align*}
 \det(B_1)=1/16 \sim 0.06, & & \det(B_2) = 1/8 \sim 0.13, & & \det(B_3) \sim 0.21,    \\
 \det(C_1) \sim 0.16, & & \det(C_2) \sim 0.16, & & \det(C_3) \sim 0.12.  \\
\end{align*}

Since all the determinants are nonzero, simplices corresponding to 
Coxeter diagrams from Figures~\ref{fig_cox-pi-4} and~\ref{fig_cox-pi-5} cannot exist. This finishes the case $\T_0=(\alpha\beta\gamma)$ and also the whole proof of Theorem~\ref{veta_hlavni}.

\section{A few remarks about general dimension $d \ge 5$}
The existence of $k$-reptile simplices in $\R^d$ for $d\ge 5$, $k\ge 3$, $k \neq m^d$, is wide open. Our approach for dimension $4$ does not seem to be powerful enough already in dimension $5$.
Let $S$ be a $k$-reptile simplex in $\R^d$.
Similarly as in Section~\ref{s:main_pf}, we may define an \emph{indivisible} edge-angle of $S$ as a spherical $(d-2)$-simplex 
that cannot be tiled with smaller spherical
$(d-2)$-simplices representing the other edge-angles of $S$ or their mirror images.
Using the same arguments as in the proof of Lemma~\ref{lemma_at-least-4-edges} one can prove a generalized statement saying that
if $\T_0$ is an indivisible edge-angle in $S$, then the edges of $S$ with edge-angle $\T_0$ have at least $D$ different lengths,
where $D$ is the algebraic degree of $k^{-1/d}$ over $\Q$.
In particular, if $d$ is prime then $D=d$.

For $d \ge 5$, we can have up to $(d+1)/2 \ge 3$ indivisible edge-angles in $S$. If $S$ has a nontrivial symmetry, Lemma~\ref{l_pocet_orbit} helps only a little: it still allows up to $(d+1)/2 -1\ge 2$ indivisible edge-angles.

For $d$ prime, it might be beneficial to consider internal angles at inner points of $k$-faces for $k\ge 2$, and use the corresponding generalization of Lemma~\ref{lemma_at-least-4-edges}. However, the combinatorial explosion of possible cases still seems overwhelming. Therefore, we think that to attack the problem in higher dimensions, new ideas and possibly more advanced tools will be needed.

\section*{Acknowledgements}
We used Sage 5.4.1~\cite{sage} and Wolfram{\textbar}Alpha~\cite{wolframalpha} for some technical computations in this paper. We thank Pavel Pat\'ak for his expert assistance with Sage
and for his help with drawing pictures.
We also thank Martin Tancer for careful proofreading of an earlier version and for his valuable suggestions, which were helpful for improving the manuscript.

\bibliographystyle{vlastni}
\bibliography{rep4}

\newpage

\begin{appendix}
\section[]{The program for generating realizable spherical triangles}\label{s:appendix}
\begin{verbatim}
def possible_tilings(u,v,w,lens,min_angle): 
    
    def cos_law(a,b,c):
        z=(cos(c)+cos(a)*cos(b))/(sin(a)*sin(b));
        return numerical_approx(arccos(z));
    
    A=cos_law(w,v,u);
    B=cos_law(w,u,v);
    C=cos_law(u,v,w);
    
    def is_equal(x):
        var("a b c")
        min_error_below=10;meb_i=0;meb_j=0;meb_k=0;
        min_error_above=10;mea_i=0;mea_j=0;mea_k=0;
        for i in range(0,20):
             for j in range(0,20-i):
                for k in range (0,20-i-j):
                    t=x-i*A-j*B-k*C;
                    if (-0.00001<=t<=min_error_below):
                        min_error_below=t;meb_i=i;meb_j=j;meb_k=k;
                    if (-0.00001<=-t<=min_error_above):
                        min_error_above=-t;mea_i=i;mea_j=j;mea_k=k;
        if (min_error_below<0.00001):
            print round(x,5), "& $ x = ",  
                  meb_i*a +meb_j*b + meb_k*c, 
                  "$","!!!\\\\"
            return; 
        print round(x,5), "& $", round(meb_i*A + meb_j*B + meb_k*C,5), 
              "=", meb_i * a  + meb_j * b + meb_k * c, "<", "x", "<", 
              mea_i*a + mea_j *b + mea_k*c, "=", 
              round(mea_i*A + mea_j*B + mea_k*C,5), "$\\\\"
        
    def tries(x,y,z,tlens):
        visited = set();
        def split(n,x,y,z):
            var("alpha beta gamma");
            for i in range(0,x+1):
                for j in range(0,y+1):
                    for k in range (0,z+1):
                        psi=i*u + j*v + k*w
                        phi=(x-i)*u + (y-j)*v + (z-k)*w
                        L = sorted([psi,phi,lens]);
                        if (0 < psi <= phi < pi 
                          and L[1]+L[2] < pi+L[0] 
                          and min_angle<psi 
                          and (psi,phi) not in visited):
                              print n, "&", (x,y,z) ," &";
                              visited.add((psi,phi))
                              print "$(",(i*alpha)+j*beta+k*gamma,",",
                                 (x-i)*alpha+(y-j)*beta+(z-k)*gamma,
                                  ")$&";
                              is_equal(cos_law(psi,phi,lens));
                          
        
        S = (x + y + z - pi);
        for d in range(2,2*tlens/S):
            for k in range(0,(d*S+pi-tlens)/x+1):
                for l in range(0,(d*S+pi-tlens-k*x)/y + 1):
                    for m in range(0,(d*S+pi-tlens-k*x-l*y)/z + 1):
                        if (d*S + pi == k*x + l*y + m*z + tlens):
                            split(d,k,l,m)
    tries(u,v,w,lens)
\end{verbatim}
\end{appendix}

\end{document}